\definecolor{ccqqqq}{rgb}{0.8,0.,0.}
\definecolor{qqqqff}{rgb}{0.,0.,1.}
\definecolor{xfqqff}{rgb}{0.4980392156862745,0.,1.}
\newcommand{%
    \def\svgwidth{1\columnwidth}
    \import{./}{.pdf_tex}
}[2][1]{%
    \def\svgwidth{#1\columnwidth}
    \import{./}{#2.pdf_tex}
}
\numberwithin{equation}{section}
\newcommand{\newatop}{\genfrac{}{}{0pt}{1}}
\newcommand{\ud}{\,\mathrm{d}}
\newcommand{\Hcal}{\mathcal{H}}
\newcommand{\Ccal}{\mathcal{C}}
\newcommand{\E}{\mathcal{E}}
\newcommand{\F}{\mathcal{F}}
\newcommand{\Fsf}{\mathsf{F}}
\newcommand{\G}{\mathsf{G}}
\newcommand{\N}{\mathbb{N}}
\newcommand{\R}{\mathbb{R}}
\newcommand{\Z}{\mathbb{Z}}
\newcommand{\A}{\mathcal{A}}
\newcommand{\Fa}{\mathsf{F}}
\newcommand{\Int}{\mathrm{int}}
\newcommand{\ext}{\mathrm{ext}}
\newcommand{\Wi}{\mathrm{wire}}
\newcommand{\Per}{\mathrm{P}}
\newcommand{\II}{\mathsf{I}}
\newcommand{\Ii}{I}
\newcommand{\planar}{\mathrm{planar}}
\newcommand{\clos}{\mathrm{clos}}
\newcommand{\ubd}{\mathrm{ubd}}
\newcommand{\bdd}{\mathrm{bdd}}
\newcommand{\TT}{\mathfrak T}
\newcommand*{\genbf}[1]{\ifmmode\mathbf{#1}\else\textbf{#1}\fi}
\newcommand{\Fill}{\mathrm{Fill}}
\newcommand{\Ed}{\mathsf{Ed}}
\newcommand{\comb}{\mathrm{comb}}
\DeclareRobustCommand{\intprod}{%
  \mathbin{\mathpalette\int@prod{(0.1,0)(0.9,0)(0.9,0.8)}}}
\DeclareRobustCommand{\restrict}{%
  \mathbin{\mathpalette\int@prod{(0.1,0.7)(0.1,0.1)(0.7,0.1)}}\!}	
\newcommand{\int@prod}[2]{%
  \begingroup
  \sbox\z@{$\m@th#1+$}%
  \setlength\unitlength{\wd\z@}%
  \begin{picture}(1,1)
  \roundcap
  \polyline#2
  \end{picture}%
  \endgroup
}
\newtheorem{theorem}{Theorem}
\newtheorem{lemma}[theorem]{Lemma}
\newtheorem{proposition}[theorem]{Proposition}
\theoremstyle{definition}
\newtheorem{definition}[theorem]{Definition}
\newtheorem{remark}[theorem]{Remark}
\date{\today}
\author[G. Del Nin]
{Giacomo Del Nin} 
\address[Giacomo Del Nin]{MPI MiS, Inselstrasse 22, 04103 Leipzig, Germany
}
\email[G. Del Nin]{giacomo.delnin@mis.mpg.de}
\author[L. De Luca]
{Lucia De Luca}
\address[Lucia De Luca]{
IAC-CNR, Via dei Taurini, 19 I-00185 Rome, Italy
}
\email[L. De Luca]{lucia.deluca@cnr.it}
\title[The square sticky disk]{The square sticky disk:\\ crystallization and Gamma-convergence\\ to the octagonal anisotropic perimeter}
\begin{document}
\begin{abstract}
    We consider a variant of the sticky disk energy where distances between particles are evaluated through the sup
  norm $  \lVert\cdot\rVert_{\infty}$
    %$\ell^\infty$ \giacomo{($\ell_\infty$/$\|\cdot\|_\infty$? also in the title)}
     in the plane. We first prove crystallization of minimizers in the square lattice, for any fixed number $N$ of particles. Then we consider the limit as $N\to \infty$: in contrast to the standard sticky disk, there is only one orientation in the limit, and we are able to compute explicitly the $\Gamma$-limit to be an anisotropic perimeter with octagonal Wulff shape. The results are based on an energy decomposition for graphs that generalizes the one proved by De Luca-Friesecke \cite{DeLucaFriesecke} in the triangular case.
     \vskip5pt
	\noindent
	\textsc{Keywords:} Crystallization; graph theory; sticky disk; variational methods; Gamma-convergence; anisotropy.
	\vskip5pt
	\noindent
	\textsc{MSC class:} 49J45 (primary); 05C10, 70C20, 82D25 (secondary).
\end{abstract}
    \maketitle 

\section{Introduction}

The \textit{sticky disk} is a very simple model of interacting particles in the plane. Given a (finite) configuration of distinct points $X=\{x_1,\ldots,x_N\}\subset \R^2$ the sticky disk energy is defined:
\begin{equation}\label{eq:energy_def}
    \E_{\mathrm{HR}}(X):=\frac 1 2\sum_{\newatop{1\le i,j\le N}{i\neq j}} V(|x_i-x_j|).
\end{equation}
Here, the symbol $|\cdot|$ denotes the Euclidean norm and $V:(0,\infty)\to [-1,+\infty]$ is the sticky disk (or \textit{Heitmann-Radin}) potential
\begin{equation}\label{eq:sticky_disk_potential}
    V(r):=
    \begin{cases}
        +\infty & \text{if $0<r<1$}\\
        -1 & \text{if $r=1$}\\
        0 & \text{if $r>1$}.
    \end{cases}
\end{equation}
This model was introduced and studied in 1980 by Heitmann and Radin \cite{HeitmannRadin}, who proved (building upon previous work by Harborth \cite{Harborth}) that for a fixed $N$ minimizers of the energy $\E_{\mathrm{HR}}$ are crystallized, i.e., they form a subset of the unit-step triangular lattice. An alternative proof based on an energy decomposition for graphs was given more recently in \cite{DeLucaFriesecke}. Radin later proved that crystallization holds also for the slightly less singular \textit{soft disk} potential \cite{Radin81}, and also in this case the energy decomposition from \cite{DeLucaFriesecke} was recently used by the authors to prove a sharp version of this result \cite{DelNinDeLuca}.

In the present work we analyze a specific anisotropic variant of the sticky disk model, where distances are evaluated by means of the norm $\|\cdot\_\infty$ (also called \textit{square norm}) in the plane:%\giacomo{Userei la somma per $i\ne j$ e quindi $8N$ invece di $4N$}
\begin{equation}\label{introen}
    \E(X):=\frac 1 2\sum_{\newatop{1\le i,j\le N}{i\neq j}} V(\|x_i-x_j\|_\infty).
\end{equation}
Such a functional has been first studied in \cite{Brass} where it has been shown that for every $N\in\N$
$$
\min_{\sharp X=N}\E(X)=\min_{\newatop{\sharp X=N}{X\subset \Z^2}}\E(X),
$$
namely that there always exists a minimum that is a subset of $\Z^2$. This suggests a finite crystallization result in the square lattice $\Z^2$, namely that \textit{every} minimum is a subset of a copy of $\Z^2$.
In Section \ref{sec:crystallization}, we prove this property (called \textit{finite crystallization}), for $N\ge 6$, for the energy $\E$.
Furthermore, in Section \ref{sec:gamma_convergence}, we develop also an asymptotic analysis (as the number $N$ of particles diverges) for quasi-minimizers, in the spirit of what was done in \cite{DeLucaNovagaPonsiglione, FriedrichKreutzSchmidt} for the Euclidean norm, that is, for the regular triangular lattice.

Let us now describe our results a bit more in detail. It is not hard to see that the maximum number of interactions for every particle is 8. Hence, ground states of the energy $\E$ should have as many points with 8 neighbors as possible.
 Such information, together with the rigidity of the interaction potential and of the norm $\|\cdot\|_\infty$, allows to show that in optimal configurations most of the particles lie on square crystallized faces, which, in turns, yields that all the faces are actually crystallized: they are either squares or triangles, corresponding to half squares.

By construction, any of the ``inner'' particles (i.e., surrounded by $8$ neighbors) in an optimal configuration gives an energy contribution equal to $-4$; therefore, the leading order of the minimal energy is $-4N$. The remaining particles are thus ``boundary'' particles (surrounded by less than $8$ neighbors) and their number is of the order of a perimeter, i.e., of $\sqrt N$ (being $N$ the ``volume'').
In other words, it can be shown that
\[
-4N+c_1\sqrt{N}\le \min_{\sharp X=N}\E({X})\le -4N+c_2\sqrt{N}
\]
%Moreover, it is not difficult to see that the energy of a minimizer $\overline{X}_N$ (among configurations of $N$ particles) satisfies
%\[
%-4N+c_1\sqrt{N}\le \E(\overline{X}_N)\le -4N+c_2\sqrt{N}
%\]
for some positive constants $c_1,c_2$. It is thus natural to check whether (local or global) crystallization takes place in the limit as $N\to\infty$ also for \textit{low-energy sequences}, namely to sequences $\{X_N\}_{N\in\mathbb{N}}$ with $\# X_N=N$ and satisfying 
\begin{equation}\label{intro:bounden}
\E(X_N)\leq -4N+C\sqrt{N}
\end{equation}
for some constant $C$. 
To this end,
we associate with every (finite-energy) configuration $X$ the finite perimeter set
\begin{equation}\label{eq:A(X)_definition}
A_\boxtimes(X):=\bigcup_{F\in \Fsf_\boxtimes(X)} F,
\end{equation}
where $\Fsf_\boxtimes(X)$ denotes the family of all the crystallized square faces of $X$. 
%(see ?? for more precise definitions).
In Proposition \ref{prop:compactness}, we provide a compactness result for the energy $\E$, which states that if a sequence $\{X_N\}_{N\in\N}$ satisfies \eqref{intro:bounden} then, up to a subsequence, the sets $N^{-1/2}A_\boxtimes(X_N)$ converge to a finite perimeter set $E$. 
In view of the rigidity highlighted above, 
the limit set $E$ presents a single orientation; in other words, sequences satisfying \eqref{intro:bounden} still exhibit asymptotic crystallization. Such a behavior is somehow in contrast with the isotropic case of the Euclidean norm, where the crystallization is only ``local", since polycrsytalline structures may arise \cite{DeLucaNovagaPonsiglione,FriedrichKreutzSchmidt}.

Finally, as one may expect, the compactness theorem stated above is accompanied with a $\Gamma$-convergence result (Theorem \ref{thm:gamma_convergence}).
Such a result shows that the $\Gamma$-limit of the energy $\E$, once renormalized with the volume term $-4N$ and scaled by the length factor $N^{1/2}$, is the anisotropic perimeter  
\begin{equation}\label{eq:anisotropic_perimeter}
\Per_\phi(E):=\int_{\partial^* E} \phi(\nu_E)\,d\Hcal^{1}
\end{equation}
associated with the anisotropy $\phi$ defined by
\begin{equation}\label{finsler}
\phi(\nu_1,\nu_2):=|\nu_1|+|\nu_2|+|\nu_1+\nu_2|+|\nu_1-\nu_2|.
\end{equation}
The construction of the recovery sequence is quite standard (see Section \ref{sec:gamma_liminf}), hence we only briefly describe the strategy for the proof of the lower bound.
Despite the rigidity mentioned above, low energy sequences do not exhibit exact crystallization (i.e., they are not subsets of a square lattice), hence the proof of the $\Gamma$-convergence still requires some fine combinatorial arguments and is thus based on an energy decomposition for graphs that generalizes the one proved in \cite{DeLucaFriesecke}; such a decomposition is derived in  Section \ref{sec:energy_decomposition} and is used also in Section \ref{sec:crystallization} to prove finite crystallization. 

 As customary, we associate to a finite energy configuration $X=\{x_1,\ldots,x_N\}$ its \textit{bond graph} $\G(X)$, whose vertices are the points of $X$, and where edges are considered between any pair of points $x,x'\in X$ with $\|x-x'\|_\infty=1$. In order to make the presentation self-contained, in Section \ref{sec:preliminaries} we have included a (not too) short recap on graph theory.
 
 Contrary to the isotropic sticky disk, the procedure described above does not necessarily define a planar graph. However, the only source of non-planarity comes from square faces having both diagonals, that are actually the faces that one wants to observe when proving crystallization. 

 Recalling that the {\it ($\G$)-degree} of a point $x\in X$ is defined as
$\deg(x)=\deg_{\G}(x):=\sharp\{\{x,y\}\in\Ed(X)\}$, the energy decomposition, stated  in Theorem \ref{thm:decomposition_square}, allows us to rewrite the excess energy 
\begin{equation}\label{eq:excess_energy_def}
\F(X):=2\E(X)+8\sharp X=\sum_{x\in X} (8-\deg(x))
\end{equation}
as a sum of different contributions. 
 The most important ones are a combinatorial perimeter-type term, and a contribution due to the \textit{angular defect} of a face.
 The latter is a notion that we introduce here and that measures in some sense how far is a face from being crystallized. The main feature of our energy decomposition is its local character, which makes its application very powerful.
Indeed, with such a tool, in order to prove the lower bound we select a family of faces whose union has the same limit of $A_{\boxtimes}(X_N)$ and whose crystalline perimeter $\Per_\phi$ behaves, up to an asymptotically vanishing error, as the energy of $X_N$.
We can thus appeal to the lower semicontinuity of $\Per_\phi$ to deduce the sought liminf inequality.\\

In conclusion, some comments are in order. 
In this paper, we have proven finite (and ``infinite'') crystallization in $\Z^2$ for the Heitmann-Radin potential acting through the  distance induced by the norm $\|\cdot\|_\infty$ between the particles. To the best of our knowledge, this is the first finite crystallization result in the square lattice for a pairwise interaction energy. A finite crystallization result in the square lattice, for a functional combining a pairwise potential with a three-body interaction, has been obtained in \cite{MaininiPiovanoStefanelli}. Despite the absence of the three-body term, our setting is sensitively more rigid than that treated in \cite{MaininiPiovanoStefanelli}, as the orientation of the square faces in our case is uniquely determined. For what concerns the pairwise interactions and the square lattice, another crystallization result in the sense of thermodynamic limit has been obtained in \cite{BeterminDeLucaPetrache}. In that case the chosen norm is the  Euclidean one and the interaction potential is given by
\begin{equation}\label{llm}
V_{\mathrm{square}}(r):=\begin{cases}
   +\infty & \text{if $0<r<1$}\\
   -1 & \text{if $1\le r\le \sqrt{2}$}\\
   0 & \text{if $r>\sqrt{2}$.}
\end{cases}
\end{equation}
We hope that some of the combinatorial tools developed here could be useful to treat the finite crystallization problem in this more general setting.

As commented in Remark \ref{damettere}, our decomposition of the energy is purely combinatorial and applies to other potentials for which crystallization in the lattice $\Z^2$ is expected, such as that considered in \cite{BeterminDeLucaPetrache}). Moreover, in Subsection \ref{decorettri}, we show that a similar decomposition applies also to the case of triangular lattice.

The Wulff shape for the $\Gamma$-limit $\Per_\phi$ of the energy $N^{-1/2}(\E+4N)$ is the regular (with respect to the norm $\|\cdot\|_\infty$) octagon; therefore, in view of our $\Gamma$-convergence analysis, minimizers of $\E$ converge to the regular octagon.
It is a natural question to investigate the values of $N$ for which minimizers are uniquely determined (as done in \cite{DeLucaFriesecke2} for the triangular lattice) as well as to check if the so-called $N^{3/4}$ law holds true also in this case (see \cite{Schmidt, MaininiPiovanoSchmidtStefanelli, DavoliPiovanoStefanelli, CicaleseLeonardi, CicaleseKreutzLeonardi} for proofs of this fact in several contexts).

Finally, having studied the cases of the Euclidean norm and of the square norm $\|\cdot\|_\infty$, we could ask what happens for general norms in the plane.
In this respect, the recent paper \cite{BeterminFurlanetto} provides interesting insights on the problem in the case of general $\|\cdot\|_p$ norms.
We plan to investigate this issue in the future; in a nuthsell, for general strictly convex norms, minimizers crystallize in a triangular lattice (see \cite{Brass}) but for non-strictly convex ones they may ``choose'' between different crystalline structures. 
This aspect makes the analysis of quasi-minimizers more intriguing, since in such a case polycrystalline structures may arise as a mixture of different lattices.

\subsection*{Acknowledgements:} LDL is member of the Gruppo Nazionale per l'Analisi Matematica, la Probabilit\`a e le loro Applicazioni (GNAMPA) of the Istituto Nazionale di Alta Matematica (INdAM).
LDL acknowledges the financial support of PRIN 2022HKBF5C ``Variational Analysis of
complex systems in Materials Science, Physics and Biology'', PNRR Italia Domani, funded
by the European Union via the program NextGenerationEU, CUP B53D23009290006.

Views and opinions expressed are however those of the authors only and do not necessarily
reflect those of the European Union or The European Research Executive Agency. Neither
the European Union nor the granting authority can be held responsible for them.
%%%%%%%%%%%%%%%%%%%%%%%%%%%%%%%%%%%%%%%%%%
%%%%%%%%%%%%%%%%%%%%%%%%%%%%%%%%%%%%%%%%%%
%%%%%%%%%%%%%%%%%%%%%%%%%%%%%%%%%%%%%%%%%%

\section*{Notation}

{\renewcommand{\arraystretch}{1.2}
\begin{tabular}{c|l|c}
    \textbf{Symbol} & \textbf{Meaning} & \textbf{Appearence}\\\hline
    $X$ & Set of points (particles) & Intro\\
    $\E(X)$ & Total energy & \eqref{introen} \\
    $\F(X)$ & Excess energy: $\F(X)=\E(X)+8N$ & \eqref{eq:excess_energy_def} \\
    $\mathsf{E}(X)$ & Set of \textit{all} edges of $X$ & Sec. \ref{sec:preliminaries} \\
    $\G$ & Graph & Sec. \ref{sec:preliminaries} \\
    $\Ed$ & Set of edges of a specific graph (subset of $\mathsf{E}$) & Sec. \ref{sec:preliminaries} \\
    $I(\G)$ & Union of all segments associated with its edges & Sec. \ref{sec:preliminaries} \\
    $\Fsf(\G)$ & Set of all faces of $\G$ & Sec. \ref{sec:preliminaries} \\
    $\Fsf_\boxtimes(\G)$ & Family of non planar faces under condition \eqref{crossass},& \\
    &i.e., family of quadrilateral faces having both diagonals & Sec. \ref{sec:preliminaries} \\
    $\Fsf_\mathrm{planar}(\G)$ & Family of planar faces of $\G$ & Sec. \ref{sec:preliminaries} \\
    $F^{\mathrm{ubd}}(\G)$ & Unbounded face of $\G$ & Sec. \ref{sec:preliminaries} \\
    $\Fsf_{\mathrm{bdd}}$ & Family of bounded faces of $\G$ & Sec. \ref{sec:preliminaries} \\
    $\Ed^{\mathrm{int}}$ & Interior edges & Sec. \ref{sec:preliminaries}  \\
    $\Ed^{\mathrm{wire,ext}}$ & Exterior wire edges & Sec. \ref{sec:preliminaries} \\
    $\Ed^{\mathrm{wire,int}}$ & Interior wire edges & Sec. \ref{sec:preliminaries} \\
    $\Ed^\partial$ & Boundary edges & Sec. \ref{sec:preliminaries} \\
    $\Ed^\partial_S(\G)$ & Boundary edges associated to $A_S(\G)$ & \eqref{eq:Ed_partial_S}\\
    $\Ed^\ext_S(\G)$ & Exterior edges associated to $A_S(\G)$ & \eqref{eq:Ed_ext_S} \\
    %$\II^\ext_S(\G)$ & Union of the segments associated to the edges in $\Ed^\ext_S(\G)$ & Subsec. \ref{subsec:energy_decomposition}\\
    $\partial X$ & Boundary of $X$ & \eqref{eq:partial_X} \\
    $\Per_\comb(\G)$ & Combinatorial perimeter of $\G$, equal to $\# \Ed^\partial+2\#\Ed^{\mathrm{wire,ext}}$ & \eqref{percomb} \\
    $A(\G)$ & Union of all bounded faces of $\G$ & \eqref{defunifacce} \\
    $A_S(\G)$ & Union of all bounded faces appearing in $S$ & \eqref{defAS} \\
    $A_\boxtimes(X)$ & Union of all crystallized square faces of $\G$  & \eqref{eq:A(X)_definition}\\
    $\Ccal(T)$ & Family of connected components of a closed set $T$ & Subsec. \ref{subsec:components} \\
    $U(T)$ & Unbounded connected component of $\R^2\setminus T$, for compact $T$ & Subsec. \ref{subsec:components}  \\
    $\Fill(T)$ & Filling of $T$, given by $\R^2\setminus \overline{U}(T)$ & Subsec. \ref{subsec:components}   \\
    $C^\ext(\partial T)$ & Exterior component of $T$ & Subsec. \ref{subsec:components} \\
    $\Ccal^{\mathrm{int}}(\partial F)$ & Family of interior components of $T$ & Subsec. \ref{subsec:components}  \\
    $\delta(\alpha)$ & Angular defect: $\frac{4}{\pi}\alpha-1$ & \eqref{eq:angular_defect} \\
    $\delta_\Delta(\alpha)$ & Angular defect for the triangular case: $\frac{3}{\pi}\alpha-1$ & \eqref{eq:triangular_defect} \\
    $\delta(F)$ & Defect of a face (sum of the defects of its internal angles) & \eqref{eq:delta_F}\\
    $\delta_\Delta(F)$ & Same as above but for the triangular case & \eqref{eq:triangular_defect} \\
    $\A,\A_N$ & Admissible configurations with/without a cardinality constraint & \\
    $\A^{\Z^2},\A^{\Z^2}_N$ & and with/without the constraint of being subsets of $\Z^2$ & Sec. \ref{sec:crystallization} \\

    $\phi$ & Anisotropy given by $\phi(\nu)=|\nu_1|+|\nu_2|+|\nu_1+\nu_2|+|\nu_1-\nu_2|$ & \eqref{finsler} \\
    $\Per_\phi$ & Anisotropic perimeter & \eqref{eq:anisotropic_perimeter} \\
    $\ell_\phi[e]$ & $\phi$-length of a segment $e$: $\Hcal^1(e)\phi(\tfrac{e^\perp}{|e^\perp|})$ & Subsec. \ref{sec:gamma_liminf} \\

\end{tabular}
}

%%%%%%%%%%%%%%%%%%%%%%%%%%%%%%%%%%%%%%%%%%
%%%%%%%%%%%%%%%%%%%%%%%%%%%%%%%%%%%%%%%%%%
%%%%%%%%%%%%%%%%%%%%%%%%%%%%%%%%%%%%%%%%%%
\section{Preliminaries on planar graphs}\label{sec:preliminaries}

Here we collect some notions and notation on graphs that will be adopted in this paper. %We refer to \cite[Section 1]{DeLNiPo} for a complete analysis.
Let $X$ be a finite subset of $\R^2$ and let $\Ed$ be a given subset of $\mathsf{E}(X)$, where
\begin{equation*}
\mathsf{E}(X):= \{\{x,y\}\subset \R^2\,:\, x,y\in X\,,\,x\neq y\}.
\end{equation*}
The pair $\G=(X,\Ed)$ is called {\it graph};  $X$ is called  the set of {\it vertices} of $\G$ and $\Ed$ is called the set of {\it edges} (or {\it bonds}) of $\G$.

For any graph $\G$ we set 
\begin{equation*}
\II(\G):=\{[x,y]\,:\,\{x,y\}\in\Ed\}\qquad\textrm{and}\qquad \Ii(\G):=\bigcup_{\{x,y\}\in\Ed}[x,y].
\end{equation*}
In what follows, for every $e=\{x,y\}\in\Ed$ we will denote by $[e]$ the (closed) segment $[x,y]$ belonging to $\II(\G)$.
%We will always assume that if $\{x_1,y_1\},\{x_2,y_2\}\in\Ed$ (with $\{x_1,y_1\}\neq\{x_2,y_2\}$) are such that $[x_1,x_2]\cap [y_1,y_2]\neq \emptyset$\,, then $\#([x_1,x_2]\cap [y_1,y_2])=1$ and either $x_j=y_k$ for some $j,k\in\{1,2\}$ or  $\#((x_1,x_2)\cap (y_1,y_2))=1$; 
%roughly speaking,
%if two distinct (closed) bonds have non empty intersection,  then they intersect in only one point $z$\,, where $z$ is either an interior or an extreme point for both the edges.

Given $X'\subset X$, we denote by $\G_{X'}$ the {\it subgraph} (or {\it restriction}) of $\G$ generated by $X'$\,, defined by $\G_{X'}=(X',\Ed')$ where $\Ed':=\{\{x',y'\}\in\Ed\,:\, x',y'\in X'\}$.

\begin{definition}\label{conncomp}
Let $\G=(X,\Ed)$ be a graph.
We say that two points $x,z\in X$ are connected, and we write $x\sim z$, if there exist a number $M\in\N$ and a {\it path} $x=y_0,\ldots,y_M=z$ such that $\{y_{m-1},y_m\}\in\Ed$ for every $m=1,\ldots,M-1$.

We say that  $\G_{X_1},\ldots,\G_{X_K}$ with $K\in\N$ are the {\it connected components} of $\G$ if 
$\{X_1,\ldots,X_{K}\}$ is a partition of $X$
and for every $k,k'\in\{1,\ldots,K\}$ with $k\neq k'$ it holds
\begin{align*}
x_k\sim y_k\qquad&\textrm{for every }x_k,y_k\in X_k\,,\\
x_{k}\not\sim x_{k'}\qquad&\textrm{for every }x_k\in X_k\,, x_{k'}\in X_{k'}.
\end{align*}
If $\G$ has only one connected component we say that $\G$ is {\it connected}.
\end{definition}
Let $\G=(X,\Ed)$ be a connected graph.
We say that $\G$ is planar if closed edges only intersect at vertices.
%no vertex belongs to the interior $(x,y)$ of an edge $\{x,y\}\in \Ed$, and if for every pair of (distinct) bonds $\{x_1,x_2\}, \{y_1,y_2\}\in\Ed$, the (open) segments $(x_1,x_2)$ and $(y_1,y_2)$ have empty intersection.
For every $x\in X$ we denote by $\deg(x)=\deg_{\G}(x)$ the {\it degree} of $x$ (in $\G$), i.e., the cardinality of bonds in $\Ed$ that contain $x$.
\begin{figure}[htbp]
    \centering
    \includegraphics[scale=0.8]{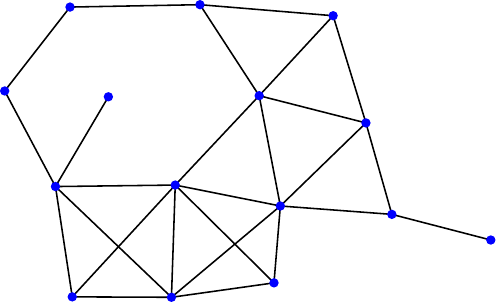}
    \caption{An admissible graph: the graph has straight edges and is planar, with the only possible exception of convex quadrilateral faces having both diagonals.}
    \label{fig:graph}
\end{figure}

Throughout the paper we will focus on graphs $\G=(X,\Ed)$ satisfying the following properties
\begin{align}\label{maxdeg}\tag{{MD}}
&
\deg(x)\le 8\qquad\textrm{for any }x\in X;\\ 
\label{crossass}\tag{{CE}}
&\textrm{if }\{x_1,x_3\},\{x_2,x_4\}\in\Ed\textrm{ and }\#((x_1,x_3)\cap(x_2,x_4))=1\,,\\ \nonumber
&\textrm{ then }\{x_j,x_k\}\in\Ed\textrm{ for every }j,k=1,\ldots,4\textrm{ with }j\neq k\,.
\end{align}
In other words, the assumption \eqref{maxdeg} (\textit{maximum degree}) provides an upper bound for the degree of each vertex, whereas the assumption \eqref{crossass} (\textit{crossing edges}) states that the only admissible non-planarity comes from (convex) quadrilateral faces having both diagonals.
In what follows we call {\it admissible} any graph satisfying the assumptions \eqref{maxdeg} and \eqref{crossass}.
%\begin{enumerate}
 %   \item the maximum degree is at most 8;
  %  \item the only admissible non-planarity comes from convex quadrilateral faces having both diagonals.
%\end{enumerate}

We denote by $\Fa_{\boxtimes}(\G)$ the family of non-planar faces, i.e., those quadrilaterals whose vertices fulfill the condition expressed in \eqref{crossass}. %More precisely, $F\in\Fsf_\boxtimes(\G)$ if $F$ is an open quadrilateral whose vertices $\{x_1,x_2,x_3,x_4\}$
%$\{x_1,x_2,x_3,x_4\}$
%satisfy \eqref{crossass}.
%and $F$ is the open quadrilateral whose vertices are $\{x_1,x_2,x_3,x_4\}$.
By a {\it (finite) planar face}, we mean any open, bounded, connected component of $\R^2\setminus\Big(X\cup \Ii(\G)\cup \bigcup_{f\in\Fa_{\boxtimes}(\G)}\clos(f)\Big)$\,,
and we denote by $\Fa_{\planar}(\G)$ the class of finite planar faces of $\G$\,.
Finally, we denote by $F^{\ubd}(\G)$ the unbounded face, i.e., the (only) open unbounded connected component of $\R^2\setminus\Big(X\cup\Ii(\G)\Big)$.

We denote by 
$\Fa(\G):=\Fa_{\boxtimes}(\G)\cup\Fa_{\planar}(\G)\cup \{F^{\ubd}(\G)\}$\, the set of all faces of $\G$, and by $\Fsf_\bdd(\G):=F(\G)\setminus \{F^\ubd(\G)\}$ the set of bounded faces. In the following we might omit the dependency on $\G$ for notational simplicity.
% we set
%\begin{equation*}
%O(\G):=\bigcup_{f\in\Fa(\G)}\clos(f)\,.
%\end{equation*}
We define the Euler characteristic of $\G$ as
\[
\chi(\G)=\# X-\#\Ed+\#\Fa(\G)\,.
\]

%From now on we assume that $\G=(X,\Ed)$ is planar, so that we can introduce the notion of face (see also \cite{DeLucaFriesecke}).

%By a face $f$ of $\G$ we mean any open, bounded, connected component of 

%$\R^2\setminus \big(X\cup\bigcup_{\{x,y\}\in\Ed}[x,y]\big)$, which is also simply connected; here $[x,y]$ is the closed segment with extreme points $x$ and $y$. 

%Given a planar bounded face $F\in \Fa_{\planar}(\G)$\,,
%we denote by $\sce(F)$ the {\it simply connected envelope} of $F$\,, namely, the intersection of simply connected, bounded and open sets containing $F$\,.\red{This concept is not well defined in general...in our case it should be but I did not find sources.}
%In what follows, $\partial^{ext} F$ denotes the \textit{exterior connected component} of $\partial F$\,, i.e., the boundary of $\sce(F)$\,, whereas $\Ccal^{int}(F)=\{\partial_j^{int}F\}_j$
%denotes the set of possibly several (or none) \textit{interior connected  components} of $\partial F$
%The latter are collected in the family $\Ccal^{int}(F)$, so that $\#\Ccal^{int}(F)$ counts the number of interior components of $\partial F$ 
%Therefore, the connected components of $\partial F$ are classified into $\{\partial^{ext} F\}$ and $\Ccal^{int}(F)$\, 
%(see also Fig. \ref{fig:delta_F}).
We set
\begin{equation}\label{defunifacce}
A(\G):=\bigcup_{F\in\Fa_{\bdd}(\G)}\clos(F)\,.
\end{equation}
%We denote by 
%$\Fa(\G):=\Fa_{\boxtimes}(\G)\cup\Fa_{\planar}(\G)\cup \{F^{\ubd}(\G)\}$\,, the set of all faces of $\G$ and 
% we set
%\begin{equation*}
%O(\G):=\bigcup_{f\in\Fa(\G)}\clos(f)\,.
%\end{equation*}
%we define the Euler characteristic of $\G$ as
%$$
%\chi(\G)=\# X-%\#\Ed+\#\Fa(\G)\,.
%$$
%and we warn the reader that this may differ from the standard Euler characteristic in graph theory. 
%(see \cite[Lemma 1.2 \& Remark 1.3]{DeLNiPo})\,.
%We just remark that if $\chi(\G)=1$\,, then $\G$ is connected.
%%%%%%%%%%%%%%%%%%%%%%%%%%
%%%%%%%%%%%%%%%%%%%%%%%%%%
%%%%%%%%%%%%%%%%%%%%%%%%%%
With a little abuse of language we will say that an edge $\{x,y\}$ lies on a set $E\subset\R^2$ if the segment $[x,y]$ is contained in $E$\,.
We classify the edges in $\Ed$ in the following subclasses:
\begin{itemize}
\item $\Ed^{\Int}$ is the set of {\it interior edges}, i.e., of edges lying on the boundary of two (distinct) faces; 
\item $\Ed^{\Wi,\ext}$ is the set of {\it exterior wire edges}, i.e., of edges that do not lie on the boundary of any face;
\item $\Ed^{\Wi,\mathrm{int}}$ is the set of {\it interior wire edges}, i.e., of edges lying on the boundary of precisely one face but not on the boundary of its closure; %(or, equivalently, of $A(\G)$)
\item $\Ed^{\partial}$  is the set of {\it boundary edges}, i.e., of edges lying on $\partial A(\G)$. 
\end{itemize}
We also define the set of \textit{wire edges} as $\Ed^{\Wi}:=\Ed^{\Wi,\mathrm{int}}\cup\Ed^{\Wi,\ext}$.
With little abuse of notation, we denote by $\partial X$ the set of {\it boundary} particles, i.e., 
\begin{equation}\label{eq:partial_X}
\partial X:=\{x\in X\,:\exists\, y\in X \text{ such that }\,\{x,y\}\in\Ed^{\partial}\cup \Ed^{\Wi,\ext}\}.
\end{equation}
We define the {\it combinatorial perimeter} of $\G$ as
\begin{equation}\label{percomb}
\Per_{\comb}(\G):=\#\Ed^{\partial}+2\#\Ed^{\Wi,\ext}\,.
\end{equation}
According with the definitions introduced above, if $A(\G)$ has simple and closed polygonal boundary and if $\#\Ed^{\Wi,\ext}=0$, then $\Per_\comb(\G)=\#\partial X$\,.
%We stress that if $\G$ has no edges, then $\Per_{\comb}(\G)=\#\partial X$.

Furthermore, for any $S\subseteq \Fa_{\bdd}(\G)$\,, we define
\begin{equation}\label{defAS}
A_S(\G):=\bigcup_{F\in S}\clos(F).
\end{equation}
Notice that, if $S=\Fa_{\bdd}(\G)$, then $A_S(\G)\equiv A(\G)$.
We denote by
$\Per_{\comb}(A_S(\G))$ the combinatorial perimeter of $A_S(\G)$\,, that is, the number of boundary edges of $A_S(\G)$\,:
\[
\Per_\comb(A_S(\G)):=\#\{\{x,x'\}\in\Ed(\G):\,[x,x']\subseteq\partial A_S(\G)\},
\]
where $\partial A_S(\G)$ denotes the topological boundary of $A_S(\G)$.

Let $F$ be a planar face of $\G$. We denote by $\Ed(F)$ the set of edges of $F$, i.e., the set $\{\{x,y\}\in\Ed:\, [x,y]\subseteq \clos(F)\}$. Moreover, we split the class $\Ed(F)$ into two subfamilies: the family of \textit{interior wire edges} $\Ed^{\Wi,\Int}(F)$, i.e., those edges from $\Ed(F)$ that are not contained in the boundary of $\clos(F)$; the family of \textit{boundary edges},  $\Ed^\partial(F):=\Ed(F)\setminus\Ed^{\Wi,\Int}(F)$. We call \textit{combinatorial perimeter} of $F$ the quantity 
\[
\Per_\comb(F):=\# \Ed^\partial(F)+2\# \Ed^{\Int,\Wi}(F).
\]

\subsection{Components of a face}\label{subsec:components}
For any closed set $T\subset\R^2$, we denote by $\Ccal(T)$ the family of connected components of $T$.
If $T$ is a closed and bounded set, we let $U(T)$ be the only unbounded connected component of $\R^2\setminus T$ and we denote by $\Fill(T):=\R^2\setminus\overline{U}(T)$ the filling of $T$. %Let $\G$ be an admissible graph.
For every bounded, closed and connected set $T$, we introduce the following concepts (see also Figure \ref{fig:delta_F}):
\begin{itemize}
    %\item $\Ccal(\partial )$ is the family of connected components of $\partial F$;
    %\item If $K$ is a bounded set, $U(K)$ denotes the only unbounded connected component of $\R^2\setminus K$;
    %\item $\Fill(F):=\R^2\setminus \overline{U(F)}$ is the \textit{filling} of $F$; %\textcolor{blue}{ quindi, se $F$ \`e una ciambella, $\Fill(F)$ \`e la ciambella pi\`u il buco, giusto?}\giacomo{esatto};
    \item $C^{\ext}(\partial T)$ denotes the \textit{exterior component}
    of $\partial T$, i.e., the connected component of $\Ccal(\partial T)$ containing $\partial T\cap \partial U(T)$;
    \item $\Ccal^{\Int}(\partial F):=\Ccal(\partial T)\setminus \{C^\ext(\partial T)\}$ denotes the family of \textit{interior components} of $\partial T$.
\end{itemize}
We highlight that $C^\ext(\partial T)$ does not coincide with $\partial T\cap\partial U(T)$.
Trivially, if $T$ is either a tree or a point, then $T\equiv \partial T$ and $\Ccal(T)=\{C^{\ext}(T)\}$.
Finally, given an admissible graph $\G$, we set $\Ccal^{\ext}(\partial F^{\ubd}):=\emptyset$ and $\Ccal^{\Int}(\partial F^{\ubd}):=\Ccal(\partial F^{\ubd})$.
%\textcolor{blue}{insieme delle componenenti connesse che contengono il bordo della faccia illimitata}.
%(if $T$ is unbounded then $\Ccal^{\ext}(\partial T)=\emptyset$)
%in particular, $\Ccal^{\Int}(\partial F^{\mathrm{ubd}})=\Ccal(\partial F)$.
%We highlight that $C^\ext(\partial F)$ does not coincide with $\partial F\cap\partial U(F)$.

%Let $F$ be a planar face of $\G$ (seen as an open, connected subset of $\R^2$). We introduce the following concepts (see also Fig. \ref{fig:delta_F}):
%\begin{itemize}
  %  \item $\Ccal(\partial F)$ is the family of connected components of $\partial F$;
   % \item If $K$ is a bounded set, $U(K)$ denotes the only unbounded connected component of $\R^2\setminus K$;
   % \item $\Fill(F):=\R^2\setminus \overline{U(F)}$ is the \textit{filling} of $F$; %\textcolor{blue}{ quindi, se $F$ \`e una ciambella, $\Fill(F)$ \`e la ciambella pi\`u il buco, giusto?}\giacomo{esatto};
    %\item $\Ccal^{\ext}(\partial F)$ denotes the \textit{exterior component}
   % of $\partial F$, i.e., the connected component of $\II(\G)$ containing $\partial F\cap \partial U(F)$ (if $F$ is unbounded then $\Ccal^{\ext}(\partial F)=\emptyset$);
    %\item $\Ccal^{\Int}(\partial %F):=\Ccal(\partial F)\setminus \{\Ccal^\ext(\partial F)\}$ denotes the family of \textit{interior components} of $\partial F$; in particular, $\Ccal^{\Int}(\partial F^{\mathrm{ubd}})=\Ccal(\partial F)$.
%\end{itemize}
%%%%%%%%%%%%%%%%%%%%%%%%%%%%
%%%%%%%%%%%%%%%%%%%%%%%%%%%%
%%%%%%%%%%%%%%%%%%%%%%%%%%%%

We record here the following lemma, which states that, given any selection of faces $S$, every interior component of an unselected face corresponds to a connected component of $A_S(\G)\cup \Ii(\G)\cup X$.
\begin{lemma}\label{lemma:double_counting}
    Let $\G=(X,\Ed)$ be an admissible graph, and let $\Fsf_\boxtimes(\G)\subseteq S\subseteq \Fsf_{\bdd}(\G)$ be a set of (bounded) faces of $\G$.
    Then, we have
   \begin{equation}\label{stessacard}
    \sum_{F\in\Fsf(\G)\setminus S} \# \Ccal^{\Int}(\partial F)=\# \Ccal(A_S(\G)\cup \II(\G)\cup X).
    \end{equation}
    %\begin{equation*}
    %\sharp\Ccal^{\Int}(\partial(\R^2\setminus {A}_S(\G)))=\sharp\Ccal(A_S(\G)\cup I(\G)\cup X).
    %\end{equation*}
    
    %Denoting by $\Ccal_S(\G)$ the family of connected components of
    %\[
    %A_S(\G)\cup \II(\G)\cup X,
    %\]
    %\textcolor{red}{prima era}%we have
    %\[
    %\sum_{F\in\Fsf(\G)\setminus S} \# %\Ccal^{\Int}(\partial F)=\# \Ccal_S(\G).
   % \]
\end{lemma}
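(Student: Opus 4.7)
My approach is induction on $\#(\Fsf(\G)\setminus S)$, going from $S=\Fsf_\bdd(\G)$ (maximal) downwards by removing one planar bounded face at a time. Throughout, let $T_S:=A_S(\G)\cup \II(\G)\cup X$. A preliminary observation is that $\R^2\setminus T_S$ equals the disjoint union of the open unselected faces of $\G$, so that the bounded connected components of $\R^2\setminus T_S$ are exactly the bounded faces of $\G$ not belonging to $S$.

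For the base case $S=\Fsf_\bdd(\G)$ one has $\Fsf(\G)\setminus S=\{F^\ubd\}$, and since $\Ccal^\ext(\partial F^\ubd)=\emptyset$ by convention the RHS of \eqref{stessacard} reduces to $\#\Ccal(\partial F^\ubd)$. Because $\R^2\setminus T_S=F^\ubd$ is a single unbounded connected region, $T_S$ has no bounded holes, and a standard planar-topology argument shows that each connected component $K$ of $T_S$ carries a unique connected outer boundary (the portion of $\partial K$ adjacent to $F^\ubd$) and that distinct components contribute disjoint pieces of $\partial F^\ubd$. This yields a bijection $\Ccal(T_S)\leftrightarrow\Ccal(\partial F^\ubd)$, handling the base case.

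For the inductive step, assume the identity holds for some $S$ with $S\supsetneq\Fsf_\boxtimes(\G)$, pick $F_0\in S\setminus\Fsf_\boxtimes(\G)$ (a planar bounded face), and set $S':=S\setminus\{F_0\}$, so $T_{S'}=T_S\setminus F_0$. The LHS of \eqref{stessacard} grows by exactly $\#\Ccal^\Int(\partial F_0)$; I must show the RHS grows by the same amount. Since $\clos(F_0)$ is connected and contained in $T_S$, it lies inside a single connected component $C$ of $T_S$, and all other components of $T_S$ persist unchanged in $T_{S'}$. I then partition $C\setminus F_0$ according to the connected components of $\R^2\setminus F_0$: by planar topology there are $1+\#\Ccal^\Int(\partial F_0)$ such regions, corresponding bijectively to the components of $\partial F_0$, with $C^\ext(\partial F_0)$ paired to the unique unbounded region and each $C'\in\Ccal^\Int(\partial F_0)$ paired to one of the bounded ones. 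Each piece of $C\setminus F_0$ is nonempty, since it contains the corresponding component of $\partial F_0\subseteq C$; the crux is that each piece is also connected, in which case $C$ is replaced by exactly $1+\#\Ccal^\Int(\partial F_0)$ components of $T_{S'}$, finishing the step.

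The main obstacle is the connectedness claim in the inductive step: one must exclude the possibility that $C$ enters a given complementary region of $F_0$ through several disjoint ``tendrils'' reconnected only through $F_0$ itself. The argument exploits the local structure of $\G$ together with the hypothesis $\Fsf_\boxtimes(\G)\subseteq S$, which ensures that every non-planar face of $\G$ is already inside $T_{S'}$, so no non-planarity of $\G$ straddles $F_0$. Each complementary region of $F_0$ in $\R^2$ is then a simply-connected planar region whose intersection with $T_{S'}$ is a union of selected-face closures and graph edges all incident to a single boundary component of $\partial F_0\subseteq C$, from which a routine planar-graph connectedness argument gives the claim and hence the lemma.
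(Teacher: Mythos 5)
Your inductive route (peel off one planar face at a time from $S=\Fsf_\bdd(\G)$ downwards) is genuinely different from the paper's proof, which is a direct two-way injection: every $C\in\Ccal^{\Int}(\partial F)$ with $F\notin S$ is the exterior boundary of a component of $T_S:=A_S(\G)\cup\II(\G)\cup X$, and conversely $C^{\ext}(\partial E)$ of any component $E$ of $T_S$ is an interior component of some unselected face. Your scheme can be made to work, but as written it has a soft spot exactly at the step you yourself call the crux, and the reason you offer for it is not the operative one. The connectedness of each piece $C\cap D$ (where $D$ ranges over the components of the closed set $\R^2\setminus F_0$ and $C$ is the component of $T_S$ containing $\clos(F_0)$) has nothing to do with the hypothesis $\Fsf_\boxtimes(\G)\subseteq S$ or with non-planarity straddling $F_0$; it is a purely topological consequence of unicoherence of the sphere: since $F_0$ is open and connected, each component $D$ of $\R^2\setminus F_0$ has \emph{connected} boundary $\partial D$, and $\partial F_0\cap D=\partial D$. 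Granting this, any ``tendril'' of $C$ entering $D$ is joined to $\partial D\subseteq\clos(F_0)\cap D\subseteq C\cap D$ by truncating a path in $C$ at its first exit from $D$, so $C\cap D$ is connected; without it, your ``routine planar-graph connectedness argument'' is not routine and is precisely the content of the lemma.

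The same unproven fact underlies your count $\#\Ccal(\R^2\setminus F_0)=1+\#\Ccal^{\Int}(\partial F_0)$, which you assert ``by planar topology.'' This is exactly the kind of statement the paper's Figure~\ref{fig:delta_F} warns about: a single interior component of $\partial F_0$ can enclose several holes, so the number of open holes of $F_0$ generally exceeds $\#\Ccal^{\Int}(\partial F_0)$. The count you need is true only because you take components of the \emph{closed} complement $\R^2\setminus F_0$ (so the holes sharing a boundary component are glued together through it), and again this rests on $\partial D$ being connected for each such component $D$. Also, these pieces need not be simply connected as you claim (the unbounded one certainly is not), though nothing in the argument requires that. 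In summary: the inductive strategy is viable and buys a clean bookkeeping identity $\#\Ccal(T_{S'})=\#\Ccal(T_S)+\#\Ccal^{\Int}(\partial F_0)$, but you must actually prove the two topological claims (connected boundary of complementary components, and the resulting bijection with $\Ccal(\partial F_0)$); the paper's direct bijection avoids the induction entirely and is considerably shorter.
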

%%%%%%%%%%%%%%%%%%%%%%%%%%%%
%%%%%%%%%%%%%%%%%%%%%%%%%%%%
%%%%%%%%%%%%%%%%%%%%%%%%%%%%
\begin{proof}
We first notice that, by construction,
any element $C\in\bigcup_{F\in\Fsf(\G)\setminus S} \Ccal^{\Int}(\partial F)$ is the exterior boundary of a connected component of $A_S(\G)\cup \II(\G)\cup X$. This proves the inequality ``$\le$'' in \eqref{stessacard}.

Analogously, for any connected component $E$ of $A_S(\G)\cup \II(\G)\cup X$ we have that $C^{\ext}(\partial E)\in \bigcup_{F\in\Fsf(\G)\setminus S} \Ccal^{\Int}(\partial F)$. %By its very definition, such a map is injective. 
This implies the inequality ``$\ge$'' in \eqref{stessacard} and concludes the proof of the lemma. 
\end{proof}

\section{Energy decomposition on graphs}\label{sec:energy_decomposition}

The aim of this section is to prove an energy decomposition for graphs arising as bond graphs for the square sticky disk. The statement is quite flexible and combinatorial in nature, and indeed holds under the mere assumption that ``topologically'' the graph looks like one arising from such a bond graph, namely satisfying \eqref{maxdeg} and \eqref{crossass} (see Fig. \ref{fig:graph}). Moreover, it can be adapted to other settings, and for this reason we also state a version for the standard sticky disk (that is, when the underlying ideal lattice is the triangular one). The latter is a generalization of a similar energy decomposition obtained in \cite{DeLucaFriesecke}. The main difference is that our decomposition can be in a sense localized, as one is allowed to select \textit{any} subset of faces, and gets a corresponding decomposition involving the perimeter of the union of only those faces (see Remark \ref{rmk:decomposition}). 
The proof is based on the notion of \textit{angular defect}, that we introduce here and that allows to somewhat simplify the original argument in \cite{DeLucaFriesecke}. Another difference is the following: in the original energy decomposition the defect of a face was defined as the number of additional edges needed to triangulate it; instead, here the defect of a face is the sum of all the angular defects at its vertices. These concepts coincide for a simply connected face, but they differ in general. 

%The energy decomposition given by Theorem \ref{thm:decomposition_square} could be used to give an alternative proof of the crystallization of the square sticky disk in the square lattice, similarly to how the original decomposition in \cite{DeLucaFriesecke} has been used to give alternative proofs of the crystallization for sticky-disk \cite{DeLucaFriesecke} and soft-disk \cite{DelNinDeLuca} interactions in the triangular lattice.

%We will focus on the square sticky disk setting below, commenting on the standard sticky disk only at the end. 

\subsection{The graph}
%As customary in the sticky disk model, we consider the \textit{bond graph} associated to a finite-energy configuration $X$, namely the graph whose vertices are the points of the configuration, and where edges are drawn between any pair of points at distance one. In the square sticky disk, such graph is not necessarily planar, but the only source of non-planarity are square faces having both diagonals as edges (see Lemma \ref{crossquadr}). 
%Having in mind the previous discussion, 
%we consider a more general case, namely a finite graph
Let $\G=(X,\Ed)$, with $X\subset\R^2$ finite and with straight edges, be a graph satisfying the admissibility properties \eqref{maxdeg} and \eqref{crossass}.
% \begin{enumerate}
%     \item the maximum degree is at most 8;
%     \item the only admissible non-planarity comes from convex quadrilateral faces having both diagonals.
% \end{enumerate}
%We will call any such graph \textit{admissible}. We denote by $\Ed(X)$ the set of edges of $X$, and by $\Fsf(X)$ the set of bounded faces of $X$, namely, all planar faces plus the non-planar quadrilateral faces.
%We also set $\Fsf^\infty(X)$ to be $\Fsf(X)$ together with the unbounded face, which is denoted by $F^{ext}(X)$ or just $F^{ext}$. Further, we define $\Fsf_\boxtimes(X)$ as the set of the non-planar faces of $X$ (which are then convex quadrilaterals with both diagonals).
%Given a planar bounded face $F$, which is a connected set with polygonal boundary, we consider its boundary $\partial F$, and we divide it into the \textit{exterior component} $\partial^{ext} F$ (which is always a single component), and into possibly several (or none) \textit{interior components} $\partial^{int}F$. The latter are collected in the family $\Ccal^{int}(F)$, so that $\#\Ccal^{int}(F)$ counts the number of interior components of $\partial F$ (see also Fig. \ref{fig:delta_F}).
We define the \textit{excess energy} as
\begin{equation}\label{excess}
\F(\G):=\sum_{x\in X}\F(x),
\end{equation}
where $\F(x)$ is the local excess defined for every vertex $x$ to be
\[
\F(x):=8-\deg(x).
\]
Loosely speaking, the energy $\F$ counts the total number of \textit{missing bonds}, as compared to the ideal situation where each particle has 8 neighbors.

%Let $\G=(X,\Ed)$ be an admissible graph, i.e., a graph satisfying the assumptions \eqref{maxdeg} and \eqref{crossass}. For any $x\in X$\,, we set $\F(x):=8-\deg(x)$ and we define
%\begin{equation}\label{energia_riscr}
%\F(\G):=\sum_{x\in X}\F(x)\,.
%\end{equation}
\subsection{Angular defects} Given an angle $\alpha$, we call
\begin{equation}\label{eq:angular_defect}
\delta(\alpha):=\tfrac{4}{\pi}\alpha-1
\end{equation}
the \textit{angular defect} of $\alpha$. This measures by how many multiples of $\tfrac{\pi}{4}$ the angle $\alpha$ exceeds $\tfrac{\pi}{4}$.
Now let $x\in X$. Let us call $\alpha_i(x)$, $i=1,\ldots,\deg(x)$, the angles between consecutive edges appearing around $x$ (all with a positive sign), and $\delta_i(x):=\delta(\alpha_i(x))$ the corresponding defects. %Note that, if $X$ arises as the bond graph of the square sticky disk,  by Lemma \ref{lemma:minimal_angle} $\delta_i(x)\ge 0$, and that in general $\delta(\alpha)$ counts the number of ``excess'' $\tfrac{\pi}{4}$-angles that fit inside $\alpha$, as compared to the ideal situation where $\alpha=\tfrac{\pi}{4}$.
We have the following simple but key identity:
\begin{equation}\label{eq:rewriting_F_defects}
\delta(x):=\sum_{i=1}^{\deg(x)} \delta_i(x)=\sum_{i=1}^{\deg(x)}\left(\frac{4}{\pi}\alpha_i(x)-1\right)=8-\deg(x) =\F(x).
\end{equation}
In other words, the excess energy $\F(x)$ coincides with the sum of the angular defects at $x$.

\begin{figure}[htbp]
    \centering
   \includegraphics[scale=0.3]{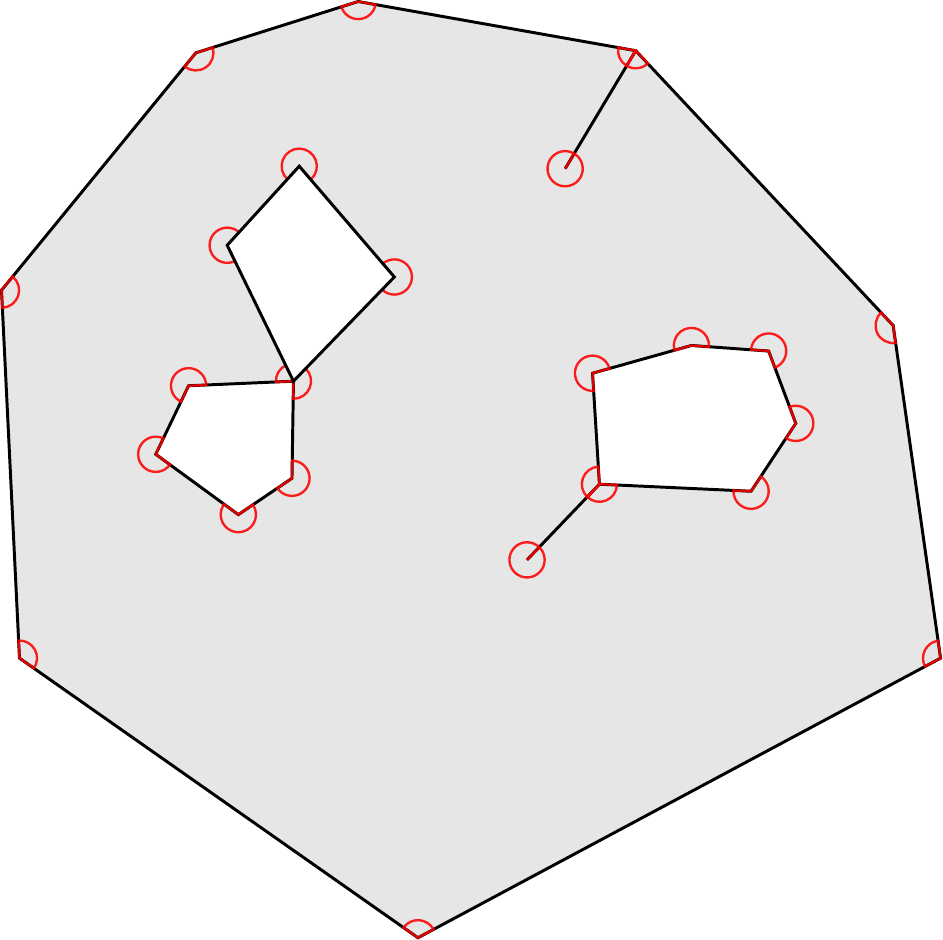}
    \caption{Example of a possible face $F$ (shaded in grey). The angles that contribute to $\delta(F)$ are highlighted in red. In this case $\delta(F)=3\Per_\comb(F)-8+8\#\Ccal^{\Int}( F)=3 \Per_\comb(F)+8$, and $\Per_\comb(F)=27$. Notice that in $\# \Ccal^\Int(F)$ we count the number of interior connected components of $\partial F$ (2 in this case), and not the number of interior connected components of $\R^2\setminus F$ (3 in this case).} % This follows from Gauss-Bonnet, as there is a contribution $-8$ given by the outer boundary, and a $+16$ given by the two inner components (notice that we count the number of connected components of $\partial F$, and not the connected components of $\R^2\setminus F$).}
    \label{fig:delta_F}
\end{figure}

\subsection{Defect of a face}
%Let $F\in\Fa_\planar(\G)$ be a planar bounded face of $\G$ and assume that $\partial F$ does not contain interior wire edges. %Then, denotein \red{Fill}
Given a simple polygon $P$ with $k$ sides, let us call $\beta_j$, $j=1,\ldots,k$ the internal angles (all considered to be positive). Then we define the \textit{defect} of $P$ as
\[
\delta(P):=\sum_{j=1}^k \delta(\beta_j).
\]
Since the sum of the internal angles of any polygon with $k$ sides is $(k-2)\pi$, we obtain
\[
\delta(P)=\frac{4}{\pi}(k-2)\pi-k=3k-8.
\]
We observe that, as a consequence, 
\[
\delta(P)=1\qquad\text{if $P$ is a triangle}.
\]
%and
%\[
%\delta(P)\ge k\qquad \text{if $k\ge 4$.}
%\]
%Notice that, \red{Fill}
Now, given an admissible graph $\G=(X,\Ed)$, 
for every $F\in\Fsf_{\bdd}(\G)$ and for every $C\in\Ccal(\partial F)$, let $\gamma(C)$ be the curve describing $C$.
If $C$ is not made of only wire edges, we can orient $\gamma(C)$ in such a way that for every edge in $\partial F\setminus\Ed^{\Wi,\Int}(F)$ whose corresponding segment lies in the support of $\gamma(C)$, the curve $\gamma(C)$ finds $F$ on its right. This means that the orientation of $\gamma(C)$ is oriented clockwise if $C=C^{\ext}(\partial F)$ and counter-clockwise whenever $C\in\Ccal^{\Int}(\partial F)$ is not a tree; by relaxation, we can  
extend the counter-clockwise orientation also to the case where $C\in\Ccal^{\Int}(\partial F)$ is a tree, and hence to any $C\in\Ccal^{\Int}(\partial F)$. 
%\textcolor{blue}{per i punti isolati il difetto è 8, che corrisponde a lunghezza 0 e orientazione 1; scrivere come conseguenza di quello che si è definito per gli alberi} 

We now define the defect $\delta(F)$ of a face $F$ as the sum of the defects of all the angles facing the inside of $F$, and we will get an alternative expression in the next few lines.
For every $C\in\Ccal(\partial F)$ we denote by  $\ell(\gamma(C))$ the graph-length of the curve $\gamma(C)$ (that is, the number of boundary edges it is made of plus twice the number of wire edges); it follows that
$$
\sum_{\beta}\delta(\beta)=3\ell(\gamma(C))+8\epsilon(\gamma(C)),
$$
where the sum runs among all the angles between consecutive edges in $\Ed(F)$ facing the interior of $F$ (see Fig. \ref{fig:delta_F}) and $\epsilon(\gamma(C))$ represents the orientation of the curve $\gamma(C)$, that equals $-1$ if $C=C^{\ext}(\partial F)$ and  equals $+1$ otherwise.
 By summing over $C$ we can see that the defect of the face $F$ can be rewritten as
 \begin{align}
\begin{aligned}\label{eq:delta_F}
\delta(F)=&\sum_{C\in\Ccal(\partial F)} ( 3\ell(\gamma(C))+8\epsilon(\gamma(C)))\\
=&3\Per_{\comb}(F)+8(\#\Ccal^{\Int}(\partial F)-1),%\\
%&=3P(F)+8\#\Ccal^{int}(F)-8.
\end{aligned}
\end{align}

Analogously, if $F$ is the unbounded face $F^{\ubd}$ there is no external connected component of $\partial F$, so that
\begin{equation}\label{eq:delta_F_unbounded}
\delta(F^{\ubd})=3\Per_\comb(F^{\ubd})+8\#\Ccal^{\Int}(F^{\ubd}).
\end{equation}
Note that $\#\Ccal^{\Int}(F^{\ubd})$ coincides with the cardinality of the connected components of 
\[
A_{\Fsf_\bdd(\G)}\cup I(\G)\cup X.
\]
In particular, $\delta(F)\ge 3\Per(F)-8\ge 1$ for any planar face. See also Figure \ref{fig:delta_F}.

Finally, we set $\delta(F):=0$ for every non-planar face, namely for $F\in\Fsf_\boxtimes(X)$. Observe that this is consistent with the sum of the internal angular defects at the 4 vertices. In conclusion, $\delta(F)$ is non-negative for \textit{every} face $F$, and at least 1 for every planar face.

\subsection{Energy decomposition}\label{subsec:energy_decomposition}
Let $\G=(X,\Ed)$ be an admissible graph. Let $S$ be any subset of the bounded faces of $\G$ that includes all the non-planar ones, namely, $\Fsf_\boxtimes(\G)\subseteq S\subseteq \Fsf_{\bdd}(\G)$, and let $A_S(\G)$ be the set defined in \eqref{defAS}.
%\[
%A_S(\G):=\bigcup_{F\in S}\clos(F).
%\]
%associated set:
%\[
%A_S(X):=\bigcup_{F\in S}\clos(F).
%\]
%%We will sometimes write simply $A$ instead of $A_S$, keeping in mind that $A$ depends on the choice of $S$.
%%We denote by $\Ccal(A_S(X))$ the family of connected components of $A_S(X)$. %Let $\Ccal^{int}$ be the number of internal boundary components, and $\Ccal^{ext}$ the number of external boundary components (to be defined better).
%Here we consider $F$ as a  closed subset of $\R^2$.
%We define $P(A_S(X))$ as the combinatorial perimeter of $A_S(X)$, that is, the number of boundary edges of $\partial A_S(X)$.

Moreover, we define %$X_S^{\ext}:=X\setminus A_{S}(\G)$ as the set of vertices of $\G$ that do not belong to $A_S(\G)$, and similarly, 
\begin{equation}\label{eq:Ed_partial_S}
\Ed^\partial_S(\G):=\{\{x,x'\}\in\Ed(\G)\,:\,[x,x']\subset \partial A_S(\G)\}
\end{equation}
as the family of edges lying on the boundary of $A_S(\G)$
and
\begin{equation}\label{eq:Ed_ext_S}
\Ed_S^{\ext}(\G):=\Ed(\G)\setminus\{\{x,x'\}\in\Ed(\G)\,:\,[x,x']\subset A_{S}(\G)\}
\end{equation}
as the family of edges of $X$ that intersect $A_S(\G)$ in at most 2 points (that is, those edges that on neither side touch a face from $S$).% Accordingly, we let
%\begin{equation}\label{eq:I_ext_S}
%\II_S^\ext(\G):=\bigcup_{\{x,y\}\in\Ed^\ext_S(\G)}[x,y].
%\end{equation}
%\textcolor{blue}{questo oggetto mi sembra che non serva; l'equazione non viene mai citata}

Recall that $\F(\G)=\sum_{x\in X} \F(x)=\sum_{x\in X}(8-\deg_{\G}(x))$. We are now ready to state the central energy decomposition result.

%Finally, $F_S^{ext}(X)$ is any face that contains at least one simply the family of faces that do not belong to $S$.

\begin{theorem}[Energy decomposition for 8 neighbors]\label{thm:decomposition_square}
Let $\G=(X,\Ed)$ be a graph satisfying \eqref{crossass}.
%, namely a graph satisfying \eqref{maxdeg} \giacomo{Per la decomposizione credo che MD non serva neanche. Serve dopo per dire che il difetto è sempre positivo; dire che soddisfa solo (CE) e mettere in un Remark che a noi serve nel caso in cui anche (MD) è soddisfatta in modo che tutti i termini col + siano positivi} and \eqref{crossass}. 
Let $S$ be any subfamily of faces satisfying $\Fsf_\boxtimes(\G)\subseteq S\subseteq \Fsf_\bdd(\G)$. Then we can rewrite the total energy as
\begin{equation}\label{formuladeco}
\begin{aligned}
\F(\G)=&\,3\Per_\comb(A_S(\G))+8\#\Ccal(A_S(\G)\cup \II(\G)\cup X)\\
&\,-8\#(\Fsf_\bdd(\G)\setminus S)+\sum_{F\in S}\delta(F)+6\# \Ed^{\ext}_S(\G).
\end{aligned}
\end{equation}
\end{theorem}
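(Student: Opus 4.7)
The proof hinges on the master identity
\[
\F(\G) = \sum_{F \in \Fsf(\G)} \delta(F),
\]
with the convention $\delta(F) = 0$ for $F \in \Fsf_\boxtimes(\G)$. To establish it, I would expand $\F(\G) = \sum_{x} \F(x)$ via the local identity \eqref{eq:rewriting_F_defects} as the double sum $\sum_{x} \sum_{i} \delta_i(x)$ and reorganise the terms according to which face each angular sector at $x$ opens into. The defects collected inside a planar bounded face $F$, respectively inside $F^\ubd$, then add up precisely to $\delta(F)$ as given by \eqref{eq:delta_F}, resp.\ \eqref{eq:delta_F_unbounded}. A direct check handles the non-planar case: at each of the four vertices of an $F \in \Fsf_\boxtimes(\G)$ there are two angular sectors opening into $F$, and since the eight corresponding angles sum to $2\pi$, their total defect is $\tfrac{4}{\pi}\cdot 2\pi - 8 = 0$, in agreement with the convention.

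Given the master identity, and using that $\Fsf_\boxtimes(\G) \subseteq S$ forces every $F \in \Fsf(\G) \setminus S$ to be planar bounded or $F^\ubd$, I would split
\[
\F(\G) = \sum_{F \in S} \delta(F) + \sum_{F \in \Fsf(\G) \setminus S} \delta(F),
\]
and substitute \eqref{eq:delta_F}, \eqref{eq:delta_F_unbounded} into the second sum to obtain
\[
\sum_{F \in \Fsf(\G) \setminus S} \delta(F) = 3 \sum_{F \in \Fsf(\G) \setminus S} \Per_\comb(F) + 8 \sum_{F \in \Fsf(\G) \setminus S} \#\Ccal^\Int(\partial F) - 8\,\#(\Fsf_\bdd(\G) \setminus S),
\]
the $-1$ offset in \eqref{eq:delta_F} producing the last term (with no contribution from $F^\ubd$). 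Lemma~\ref{lemma:double_counting} immediately identifies the middle sum with $\#\Ccal(A_S(\G) \cup \II(\G) \cup X)$.

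It remains to prove the edge-count identity
\[
\sum_{F \in \Fsf(\G) \setminus S} \Per_\comb(F) = \Per_\comb(A_S(\G)) + 2 \,\#\Ed^\ext_S(\G),
\]
which I would establish by inspecting the contribution of each $e \in \Ed(\G)$ to both sides according to its type (interior, boundary, interior wire, exterior wire) and the membership in $S$ of the face(s) it touches. For an interior edge shared by $F_1, F_2$ the left-hand side counts $\#(\{F_1, F_2\} \setminus S)$, while on the right $e$ lies in $\partial A_S(\G)$ iff exactly one of $F_1, F_2$ is in $S$ and $e$ lies in $\Ed^\ext_S(\G)$ iff neither is; the two counts agree in all four sub-cases. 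The analogous checks for boundary edges (one side being $F^\ubd \notin S$), interior wire edges (counted with multiplicity $2$ inside their unique face), and exterior wire edges (counted with multiplicity $2$ in $F^\ubd$) are routine. Putting the three steps together yields \eqref{formuladeco}.

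The main obstacle is the careful bookkeeping around wire edges and the unbounded face, both in the passage from angular defects to the formulas \eqref{eq:delta_F}--\eqref{eq:delta_F_unbounded} (where wire edges enter $\Per_\comb(F)$ with multiplicity $2$ and isolated pieces inside $F$ contribute to $\#\Ccal^\Int$) and in the edge-by-edge verification of the edge-count identity; once these are resolved, the remaining manipulations are purely algebraic.
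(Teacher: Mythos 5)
Your proposal is correct and follows essentially the same route as the paper's proof: the double counting $\F(\G)=\sum_x\delta(x)=\sum_F\delta(F)$, the split according to $S$, the substitution of \eqref{eq:delta_F}--\eqref{eq:delta_F_unbounded}, the appeal to Lemma \ref{lemma:double_counting} for the interior components, and the edge-count identity $\sum_{F\notin S}\Per_\comb(F)=\Per_\comb(A_S(\G))+2\#\Ed^\ext_S(\G)$. You merely spell out in more detail two steps the paper treats briefly (the verification of the master identity at non-planar faces and the edge-by-edge case analysis), but the argument is the same.
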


%We stress that this theorem is purely combinatorial, and could be applied for instance to situations where the bond graph arises from small perturbations of the square sticky disk, or even to the isotropic potential attaining value $-1$ in the interval $[1,\sqrt{2}]$ (see \cite{BeterminDeLucaPetrache}) or variations thereof.

%%%%%%%%%%%%%%%%%%%%%%%%%%%%%%%
%%%%%%%%%%%%%%%%%%%%%%%%%%%%%%%
%%%%%%%%%%%%%%%%%%%%%%%%%%%%%%%
\begin{proof}
First, recalling \eqref{eq:rewriting_F_defects}, we use a double counting to write the sum of the angular defects among all vertices as the sum of the defects over all faces, and we split the sum according to $S$:
\begin{equation}\label{stardeco}
\F(\G)=\sum_{x\in X}\delta(x)=\sum_{F\in\Fsf(\G)}\delta(F)
=\sum_{F\in S}\delta(F)+\sum_{F\in \Fsf(\G)\setminus S}\delta(F).
\end{equation}
The first sum on the right-hand side term of \eqref{stardeco} is already one of the parts of the final energy decomposition, so we just need to expand the second one. 
Recalling \eqref{eq:delta_F} and \eqref{eq:delta_F_unbounded} we obtain
\begin{equation}\label{dallefacce}
\begin{aligned}
\sum_{F\in \Fsf(\G)\setminus S}\delta(F)=&\,\sum_{F\in \Fsf(\G)\setminus S}3\Per_\comb(F)+8\sum_{F\in \Fsf(\G)\setminus S}\#\Ccal^{\Int}(\partial F)\\
&\qquad -8\#(\Fsf_{\bdd}(\G)\setminus S).
\end{aligned}
\end{equation}
For the first summand in \eqref{dallefacce} we obtain
\[
\sum_{F\in \Fsf(\G)\setminus S}3\Per_\comb(F)=3\# \Ed_S^\partial(\G)+6\# \Ed^{\ext}_S(\G)=3\Per_\comb(A_S(\G))+6\# \Ed^{\ext}_S(\G),
\]
since the edges in $\Ed_S^\partial (\G)$ are exactly those that appear in the perimeter of $A_S(\G)$ (and are thus counted only once), while $\Ed^{\ext}_S(X)$ are those counted twice. For the second summand in \eqref{dallefacce}, Lemma \ref{lemma:double_counting} yields
\[
\sum_{F\in \Fsf(\G)\setminus S}\#\Ccal^{\Int}(F)=\#\Ccal(A_S(\G)\cup\II(\G)\cup X).
\]
Putting all the terms together we obtain the desired decomposition.
\end{proof}
%%%%%%%%%%%%%%%%%%%%%%%%%%%%%%%
%%%%%%%%%%%%%%%%%%%%%%%%%%%%%%%
%%%%%%%%%%%%%%%%%%%%%%%%%%%%%%%
\begin{remark}
In what follows we will apply the energy decomposition \eqref{formuladeco} to the graph ``generated''  by configuration whose energy $\E$ \eqref{introen} is finite (see formula \eqref{generated_edges}). By Lemmas \ref{crossquadr} and \ref{lemma:minimal_angle} below, such graphs are admissible, i.e, satisfy both properties \eqref{crossass} and \eqref{maxdeg}.
We stress that to get Theorem \ref{thm:decomposition_square} it is enough to assume that only property \eqref{crossass} is satisfied. On the other hand, property \eqref{maxdeg} guarantees that the angular defect of a face is nonnegative and such a property is crucial when proving crystallization. 
%, namely a graph satisfying \eqref{maxdeg} \giacomo{Per la decomposizione credo che MD non serva neanche. Serve dopo per dire che il difetto è sempre positivo; dire che soddisfa solo (CE) e mettere in un Remark che a noi serve nel caso in cui anche (MD) è soddisfatta in modo che tutti i termini col + siano positivi} and \eqref{crossass}. 
\end{remark}
%%%%%%%%%%%
%%%%%%%%%%%
%%%%%%%%%%%
\begin{remark}\label{damettere}
   We highlight that the energy decomposition \eqref{formuladeco} is purely combinatorial and is not related to the choice of a particular energy functional. It can be applied for instance to the pairwise interaction energy defined in \eqref{llm}. As an example of its ductility, we will see, in Subsection \ref{decorettri} below, how this formula can be adapted also to the case of a graph arising from the standard (isotropic) sticky disk.
\end{remark}
%%%%%%%%%%%
%%%%%%%%%%%
%%%%%%%%%%%
%\textcolor{blue}{riguardare questa sottosezione sul reticolo triangolare}
\subsection{Energy decomposition in the triangular case}\label{decorettri}
We report here also a version of the energy decomposition \eqref{formuladeco} that works for the ``triangular'' setting. We omit the proof because it is virtually identical to the one above. %One can imagine to have similar statements for graphs with any fixed maximum degree, but we only comment on the case of maximum degree 6.

Let then $\G=(X,\Ed)$ be a planar graph with straight edges, with maximum degree 6. Given an angle $\alpha$, define its \textit{triangular defect} as
\begin{equation}\label{eq:triangular_defect}
\delta_\triangle(\alpha):=\frac{3}{\pi}\alpha -1.
\end{equation}
Accordingly, we let $\delta_\Delta(F)$ be the defect of the face $F$, computed replacing $\delta(\alpha)$ with $\delta_\Delta(\alpha)$.
Define as above $\Fsf(\G)$ as the set of faces of $\G$, and given $S\subseteq \Fsf(\G)$ let $A_S(\G)$ and $\Ed_S^{\ext}(\G)$ and $\Ccal (A_S(\G)\cup\II(\G)\cup X)$ be defined as in Subsection \ref{subsec:energy_decomposition}.
Defining $\F(\G):=\sum_{x\in X}(6-\deg_\G(x))$, we have the following energy decomposition.

\begin{theorem}[Energy decomposition for 6 neighbors]\label{thm:decomposition_triangular}
    For every planar graph $\G$ with maximum degree 6, and any choice of a subset $S\subseteq\Fsf(\G)$, the following equality holds:
\begin{equation*}
\begin{aligned}
    \F(\G)=&\, 2\Per_\comb(A_S(\G))+6\#\Ccal\big(A_S(\G)\cup\II(\G)\cup X\big)\\
    &\,-6\#(\Fsf(\G)\setminus S)+\sum_{F\in S}\delta_\triangle(F) +4\# \Ed_S^{\mathrm{ext}}(\G).
\end{aligned}
\end{equation*}
\end{theorem}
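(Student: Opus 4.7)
The plan is to mirror the proof of Theorem \ref{thm:decomposition_square} line by line, with the only changes being the numerical constants (the prefactor $\tfrac{4}{\pi}$ becomes $\tfrac{3}{\pi}$, and correspondingly the constants $8, 3, 6$ become $6, 2, 4$). Because $\G$ is planar by assumption, $\Fsf_\boxtimes(\G)=\emptyset$, so no bookkeeping is needed for non-planar faces and in fact the argument becomes slightly simpler.

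First, I would verify the vertex-level identity analogous to \eqref{eq:rewriting_F_defects}. For any $x\in X$, the angles $\alpha_i(x)$ between consecutive edges around $x$ sum to $2\pi$, so
\[
\sum_{i=1}^{\deg(x)} \delta_\triangle(\alpha_i(x)) = \frac{3}{\pi}\cdot 2\pi - \deg(x) = 6 - \deg(x) = \F(x).
\]
Double counting of angular contributions at vertices versus at faces then gives
\[
\F(\G)=\sum_{x\in X}\F(x)=\sum_{F\in\Fsf(\G)}\delta_\triangle(F).
\]

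Next, I would establish the face-level formula analogous to \eqref{eq:delta_F}: for any bounded planar face $F$, orienting each boundary component $C\in\Ccal(\partial F)$ exactly as in Subsection~\ref{subsec:energy_decomposition} and summing the defects of the angles facing the interior of $F$ along the oriented curve $\gamma(C)$ gives
\[
\sum_\beta \delta_\triangle(\beta)=2\ell(\gamma(C))+6\epsilon(\gamma(C)),
\]
with $\epsilon(\gamma(C))=-1$ for $C=C^\ext(\partial F)$ and $+1$ for $C\in\Ccal^{\Int}(\partial F)$. Summing over $C$ yields
\[
\delta_\triangle(F)=2\Per_\comb(F)+6\bigl(\#\Ccal^{\Int}(\partial F)-1\bigr)
\]
for bounded $F$, and $\delta_\triangle(F^{\ubd})=2\Per_\comb(F^{\ubd})+6\#\Ccal^{\Int}(F^{\ubd})$.

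Finally, splitting the face sum along $S$ and $\Fsf(\G)\setminus S$ and substituting these identities, the perimeter contribution rearranges by the same edge double-counting used in the square case (each edge of $\Ed_S^\partial(\G)$ lies on exactly one face of $\Fsf(\G)\setminus S$, each edge of $\Ed_S^\ext(\G)$ lies on two), giving
\[
\sum_{F\in\Fsf(\G)\setminus S}2\Per_\comb(F)=2\Per_\comb(A_S(\G))+4\#\Ed_S^{\ext}(\G),
\]
while Lemma~\ref{lemma:double_counting} (whose statement and proof are purely topological and do not use \eqref{crossass}) yields
\[
\sum_{F\in\Fsf(\G)\setminus S}\#\Ccal^{\Int}(\partial F)=\#\Ccal\bigl(A_S(\G)\cup\II(\G)\cup X\bigr).
\]
Assembling these three pieces with the $-6$ correction per bounded face in $\Fsf(\G)\setminus S$ gives the claimed identity. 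I do not anticipate any real obstacle: every step of the square case carries over unchanged, which is why the authors call the proof ``virtually identical''; the only mildly delicate point is making sure the orientation/relaxation argument for tree components of $\partial F$ still makes sense, but this is again inherited verbatim.
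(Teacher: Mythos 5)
Your proposal is correct and follows exactly the route the paper intends: the paper omits the proof of Theorem \ref{thm:decomposition_triangular} precisely because it is ``virtually identical'' to that of Theorem \ref{thm:decomposition_square}, and your constant replacements ($6-\deg(x)$ at vertices, $\delta_\triangle(F)=2\Per_\comb(F)+6(\#\Ccal^{\Int}(\partial F)-1)$ at faces, the factor $4$ on exterior edges) together with the same double-counting steps and Lemma \ref{lemma:double_counting} are all as expected.
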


\begin{remark}[Relation with the energy decomposition in {\cite{DeLucaFriesecke}}]\label{rmk:decomposition}
%\giacomo{Mi sono accorto che le decomposizioni sono leggermente diverse.}
We observe that the energy decomposition given in \cite{DeLucaFriesecke} corresponds to the choice $S=\Fsf(\G)$ in Theorem \ref{thm:decomposition_triangular}, whenever the faces are all simply connected: in this case we obtain
\begin{align*}
\F(\G)&=2\Per_\comb(A(\G))+6\#\Ccal(A(\G)\cup\II(\G)\cup X)+\sum_{F\in\Fsf(\G)}\delta_\triangle(F)+4\# \Ed^{\ext}_S(X)\\
&=2\Per_\comb(\G)+6\#\Ccal(A(\G)\cup\II(\G)\cup X)+2\mu(\G).
\end{align*}
Here, following the notation in \cite{DeLucaFriesecke}, $\mu(\G)$ stands for the number of additional edges needed to triangulate all the bounded faces of the graph (indeed, notice that $\delta(F)=2k_F-6$ is twice the number of sides needed to triangulate $F$), and $\Per_\comb(\G)$ is the combinatorial perimeter that counts exterior wires twice. This corresponds exactly to the energy decomposition obtained in \cite[Theorem 3.1]{DeLucaFriesecke} (notice also that in their notation, under the assumption that all the faces are simply connected, $\chi(X)$ coincides with the number of connected components of the graph $\G$, namely, with $\#\Ccal(A(\G)\cup\II(\G)\cup X)$; moreover $\F(\G)=6\sharp X+2\E_{\mathrm{HR}}(X)$).

Clearly, the decomposition of the energy in Theorem \ref{thm:decomposition_triangular} must be equivalent to that obtained in \cite[Theorem 3.1]{DeLucaFriesecke}, also without assuming that all the faces are simply connected, since both decompositions are a rewriting of $\F(\G)$; however, in such a general case the analogy is less easy to see, since the notion of face in \cite{DeLucaFriesecke}  does not coincide with that given here, and this choice clearly affects $A(\G)$, and, in turn,
both the perimeter term and the number of connected components of $A(\G)$. In other words, the sum between the perimeter, the cardinality of connected components and the defects of the faces is the same in both the decomposition, although there is no correspondence between each of the single terms of the sums.
\end{remark}

\section{Crystallization}\label{sec:crystallization}
We now specialize the discussion to the square sticky disk. Our aim in this section is to prove that minimizers of $\E$ are crystallized for $N\ge 6$. We will follow very closely some ideas introduced by Brass \cite{Brass}. He showed that every finite energy configuration $X_N$ can be deformed into a crystallized configuration $\widetilde X_N$ without increasing the energy. In particular, this proves that there \textit{exist} crystallized minima, but does not immediately show that \textit{all} minima are necessarily crystallized. We analyze Brass's proof to show that, if the starting configuration is a minimum, and if it is not already crystallized, then the deformation strictly decreases the energy (thus contradicting the minimality assumption).

We define the space of admissible configurations by
\begin{equation*}
\A:=\{X\subset\R^2\,:\,\# X\in\N\,,\quad \|x-y\|_{\infty}\ge 1\textrm{ for every } x,y\in X\textrm{ with }x\neq y\}\,,
\end{equation*}
and we observe that 
$\A\equiv\{X\subset\R^2\,:\,\E(X)<\infty\}$\,.
Moreover, for every $N\in\N$\,, we define
\begin{equation*}
\A_N:=\{X\in\A\,:\,\# X=N\}\,,
\end{equation*}
so that $\A\equiv\bigcup_{N\in\N}\A_N$\,.
Analogously, we set
\begin{equation*}
\A^{\Z^2}:=\{X\in\A\,:\,X\subset\Z^2\}\qquad \textrm{and}\qquad \A^{\Z^2}_N:=\A^{\Z^2}\cap\A_N\,.
\end{equation*}
%%%%%%%%%%%%%%%%%%%%%%%%%%%%%%
%%%%%%%%%%%%%%%%%%%%%%%%%%%%%%
%%%%%%%%%%%%%%%%%%%%%%%%%%%%%%
Our main result in this section is the following.
%%%%%%%%%%%%%%%%%%%%%%%%%%%%%%
%%%%%%%%%%%%%%%%%%%%%%%%%%%%%%
%%%%%%%%%%%%%%%%%%%%%%%%%%%%%%
\begin{theorem}\label{mainsquare}
Let $N\in\N$ and let $X_N$ be a minimizer of $\E$ in $\A_N$. Then $\G(X_N)$ is connected. Moreover, if $N\ge 3$, then $\G(X_N)$ has no wire edges. Furthermore, if $N\ge 6$,
then $X_N\in \A_N^{\Z^2}$ (up to a translation), $A(X_N)$ has simple and closed polygonal boundary, and $\delta(F)\in\{0,1\}$ for every bounded face $F\in\Fsf^{\bdd}(X_N)$.   
\end{theorem}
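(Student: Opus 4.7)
The plan is to follow Brass's \cite{Brass} deformation strategy, refined by the energy decomposition of Theorem \ref{thm:decomposition_square}. Brass showed that every admissible configuration can be deformed, without increasing the energy, into a configuration on $\Z^2$. To prove the stronger statement that \emph{every} minimizer is crystallized, I will analyze each step of his deformation and show that, unless the starting configuration is already in $\A_N^{\Z^2}$, the deformation strictly decreases the energy, contradicting minimality.

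Connectedness is immediate: if $\G(X_N)$ had two or more connected components $C_1, C_2$, translating $C_2$ rigidly towards $C_1$ preserves internal bonds and, stopping when the first pair of points reaches $\|\cdot\|_\infty$-distance $1$, creates at least one new bond, strictly decreasing $\E$. For the absence of wire edges when $N\ge 3$, by connectedness the decomposition with $S=\Fsf_\bdd(\G(X_N))$ reads
\begin{equation*}
\F(\G(X_N)) = 3 \Per_\comb(A(\G(X_N))) + 8 + \sum_{F\in \Fsf_\bdd} \delta(F) + 6\# \Ed^{\Wi,\ext}(\G(X_N)),
\end{equation*}
so each exterior wire edge costs $6$, while each interior wire edge contributes $6$ through $\delta(F)$ (since $\Per_\comb(F)$ counts interior wires twice). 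An explicit local modification---detaching a dangling leaf and reattaching it to a vertex with fewer than $8$ neighbors, or rotating a wire-bearing branch into a face-bordering position---eliminates the wire edge and, for $N \ge 3$, strictly lowers the right-hand side, again contradicting minimality.

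For the crystallization claim when $N \ge 6$, the plan is to iterate Brass's local reshaping moves until every bounded face is either a crossed square (with $\delta(F)=0$) or a half-square triangle (with $\delta(F)=1$). Any planar face with $k \ge 4$ sides has $\delta(F) \ge 3k-8 \ge 4$, and can be reshaped into triangular subfaces while preserving or creating bonds, strictly lowering the defect sum. Triangles that are not half-squares still satisfy $\delta(F)=1$, but their vertices fail to fit into a $\Z^2$ lattice; Brass's geometric argument exploits the mismatch between the angles at each internal vertex (which must be positive integer multiples of $\pi/4$ summing to $2\pi$ in a fully crystallized cluster, by the maximum-degree constraint \eqref{maxdeg}) and the non-lattice angles of such a triangle, producing a strict deformation that adds at least one bond. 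The main obstacle will be ensuring that each of Brass's moves is \emph{strict} for $N \ge 6$: for very small $N$ some moves are neutral, and the threshold $N \ge 6$ will be needed to guarantee enough ambient structure around each face to force strict improvement.

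Once $X_N\subset \Z^2$ (up to translation) is established, the remaining claims follow quickly. The absence of wire edges, combined with connectedness and $X_N\subset \Z^2$, forces $A(\G(X_N))$ to have simple and closed polygonal boundary, since wire edges are precisely the features that would produce ``dangling'' or ``pinched'' pieces on $\partial A(\G(X_N))$. Finally, every bounded face of the graph of a $\Z^2$-configuration is either a unit square in $\Fsf_\boxtimes$ (hence $\delta=0$) or a half-square triangle (hence $\delta=1$), yielding the dichotomy $\delta(F)\in\{0,1\}$.
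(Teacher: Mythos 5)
Your high-level plan (Brass's deformation plus the energy decomposition) points in the right direction, but several of the steps you describe either do not work as stated or hide the actual content of the argument.

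First, the claim that once $X_N\subset\Z^2$ the simplicity of $\partial A(X_N)$ ``follows quickly'' from connectedness and the absence of wire edges is false: a configuration with no wire edges can still have a cut vertex where two clusters of faces meet at a single point, and then $\partial A(X_N)$ is not simple. Ruling this out is a separate, non-trivial step; the paper does it by first excluding concave angles equal to $\tfrac{\pi}{2}$ at boundary points (Lemma \ref{angoliconcavi}, which compares the configuration with one where a low-degree boundary point is moved into the concave corner) and then, in Proposition \ref{simplyclosedbdryy}, translating one of the two pieces meeting at the pinch point to produce either a strict energy gain or a forbidden concave angle. Second, your treatment of faces with $k\ge 4$ sides (``reshaped into triangular subfaces while preserving or creating bonds'') is not a proof: you cannot triangulate a face without adding vertices, and adding vertices changes $N$. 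The paper's mechanism is precisely to add all interior lattice points of the face, which lowers the energy by at least $2k-8\ge 4$ via the decomposition, and then to derive a contradiction from the \emph{monotonicity} of $N\mapsto\min_{\A_N}\F_N$ (Lemma \ref{lemma:monotonicity}); this monotonicity lemma is an essential ingredient absent from your proposal, as is the separate case of the ``diamond'' face with $\Per_\comb=4$ that is not a crossed square. Third, the step from ``the $\Z^2$-image $\TT X_N$ is crystallized'' back to ``$X_N$ itself lies in a translate of $\Z^2$'' is where the real difficulty sits: crossed squares are rigid by Lemma \ref{crossquadr}, but a $\|\cdot\|_\infty$-equilateral triangle such as $(0,0),(1,0),(t,1)$ has all angles strictly above $\tfrac{\pi}{4}$ for $t\in(0,1)$, so no single-vertex angle count excludes it. The paper instead proves rigidity lemmas for the triangular faces of the $\Z^2$-minimizer (Lemmas \ref{gradomag3} and \ref{ancorapiurig}: each triangle either has both catheti on crystallized squares or pairs with a mirror triangle) and pulls this back through the graph isomorphism $\TT$ of Proposition \ref{isomorphism}; your ``angle mismatch'' sketch does not substitute for this. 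Finally, your wire-edge argument (detaching a leaf and reattaching it) loses one bond before gaining any and so does not obviously yield a strict decrease; the paper's move keeps the wire bond and slides the whole detached component along the sphere $S(x)=\{z:\|z-x\|_\infty=1\}$, creating an additional bond at no cost.
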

%%%%%%%%%%%%%%%%%%%%%%%%%%%%%%
%%%%%%%%%%%%%%%%%%%%%%%%%%%%%%
%%%%%%%%%%%%%%%%%%%%%%%%%%%%%%
\begin{remark}
Notice that the assumption $N\ge 6$ is necessary as the result is not satisfied for $N=2,3,5$.
Indeed, the minimizers of $\E$ for $N=2$ are the pairs of points $\{x,y\}$ with $\|x-y\|_{\infty}=1$ and for $N=3$ are the triple of points $\{x,y,z\}$ forming an 
equilateral (with respect to $\|\cdot\|_\infty$) triangle with sidelength (with respect to $\|\cdot\|_\infty$) equal to $1$. Therefore, for $N=2$ and $N=3$ minimizers do not necessarily lie on a copy of $\Z^2$ (see Figure \ref{fig:stima} below for an example for $N=3$).

One can easily check that the unique (up to a translation) minimizer of the energy $\E$ in the case of $4$ particles is the configuration $\overline X_4=\{(0,0),(1,0),(1,1),(0,1)\}$, so in this case the minimizer is in $\A_4^{\Z^2}$. 
Finally for $N=5$, one has two classes of minimizers. The first one, belonging to in $\A_5^{\Z^2}$, is given --up to a translation-- by the configuration $\{(0,0),(1,0),(0,1),(-1,0), (0,-1)\}$; the second class is given by the configurations $\overline X_4\cup\{\bar x\}$, where the point $\bar x\notin[0,1]^2$ is positioned in such a way that it forms an equilateral (with respect to $\|\cdot\|_\infty$) triangle with sidelength (with respect to $\|\cdot\|_\infty$) equal to $1$ with one of the horizontal or vertical bonds of $\overline X_4$
(see Figure \ref{nonvero}).
\end{remark}
%%%%%%%%%%%%%%%%%%%%%%%%%%%%%%
%%%%%%%%%%%%%%%%%%%%%%%%%%%%%%
%%%%%%%%%%%%%%%%%%%%%%%%%%%%%%
\begin{figure}[htbp]
    \centering
{\def\svgwidth{210pt}
%% Creator: Inkscape 1.0beta2 (2b71d25, 2019-12-03), www.inkscape.org
%% PDF/EPS/PS + LaTeX output extension by Johan Engelen, 2010
%% Accompanies image file '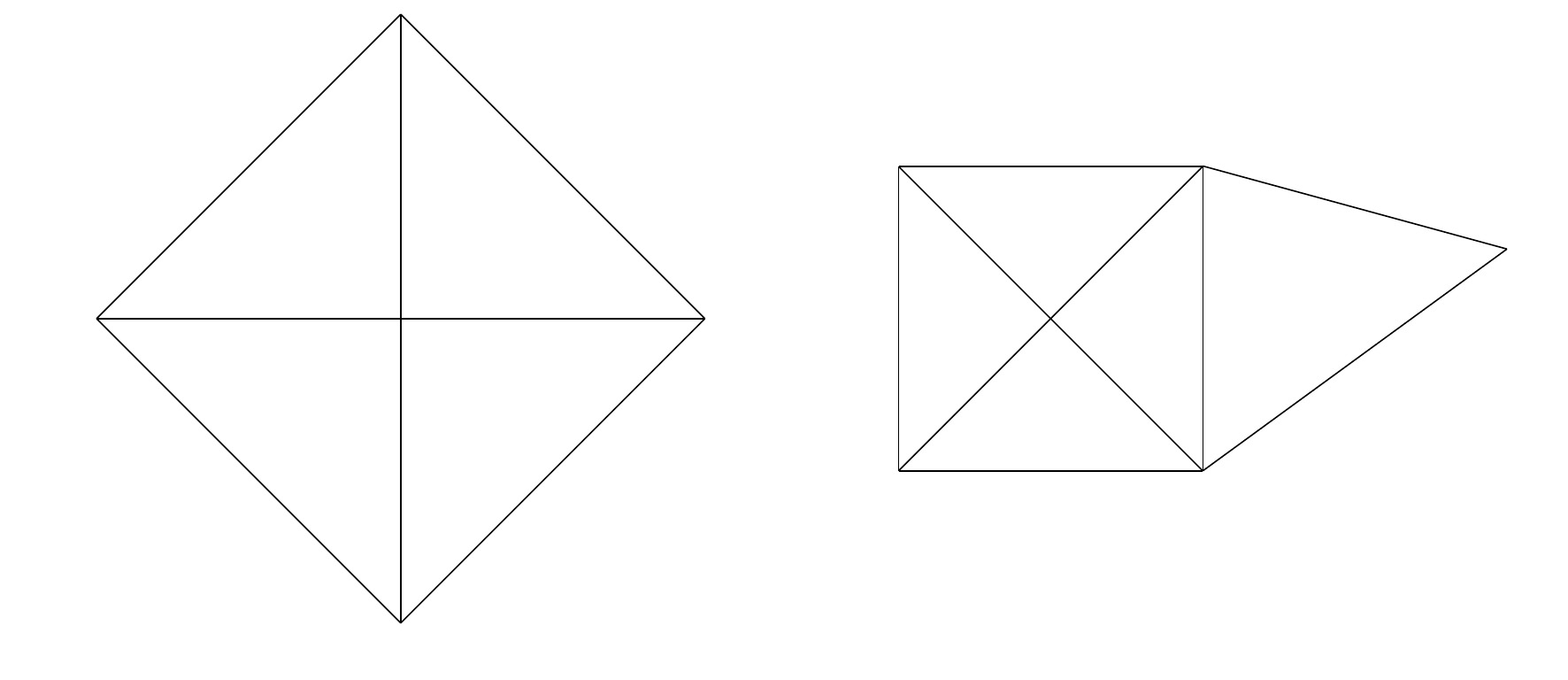' (pdf, eps, ps)
%%
%% To include the image in your LaTeX document, write
%%   \input{<filename>.pdf_tex}
%%  instead of
%%   \includegraphics{<filename>.pdf}
%% To scale the image, write
%%   \def\svgwidth{<desired width>}
%%   \input{<filename>.pdf_tex}
%%  instead of
%%   \includegraphics[width=<desired width>]{<filename>.pdf}
%%
%% Images with a different path to the parent latex file can
%% be accessed with the `import' package (which may need to be
%% installed) using
%%   \usepackage{import}
%% in the preamble, and then including the image with
%%   \import{<path to file>}{<filename>.pdf_tex}
%% Alternatively, one can specify
%%   \graphicspath{{<path to file>/}}
%% 
%% For more information, please see info/svg-inkscape on CTAN:
%%   http://tug.ctan.org/tex-archive/info/svg-inkscape
%%
\begingroup%
  \makeatletter%
  \providecommand\color[2][]{%
    \errmessage{(Inkscape) Color is used for the text in Inkscape, but the package 'color.sty' is not loaded}%
    \renewcommand\color[2][]{}%
  }%
  \providecommand\transparent[1]{%
    \errmessage{(Inkscape) Transparency is used (non-zero) for the text in Inkscape, but the package 'transparent.sty' is not loaded}%
    \renewcommand\transparent[1]{}%
  }%
  \providecommand\rotatebox[2]{#2}%
  \newcommand*\fsize{\dimexpr\f@size pt\relax}%
  \newcommand*\lineheight[1]{\fontsize{\fsize}{#1\fsize}\selectfont}%
  \ifx\svgwidth\undefined%
    \setlength{\unitlength}{850.39370079bp}%
    \ifx\svgscale\undefined%
      \relax%
    \else%
      \setlength{\unitlength}{\unitlength * \real{\svgscale}}%
    \fi%
  \else%
    \setlength{\unitlength}{\svgwidth}%
  \fi%
  \global\let\svgwidth\undefined%
  \global\let\svgscale\undefined%
  \makeatother%
  \begin{picture}(1,0.43333333)%
    \lineheight{1}%
    \setlength\tabcolsep{0pt}%
    \put(0,0){\includegraphics[width=\unitlength,page=1]{nonvero.pdf}}%
    \put(0.88762992,0.30698237){\color[rgb]{0,0,0}\makebox(0,0)[lt]{\lineheight{1.25}\smash{\begin{tabular}[t]{l}$\bar x$\end{tabular}}}}%
    \put(0,0){\includegraphics[width=\unitlength,page=2]{nonvero.pdf}}%
  \end{picture}%
\endgroup%
}
    \caption{The two types of minimizers of the energy for $N=5$. On the left: the configuration $\{(0,0),(1,0),(0,1),(-1,0), (0,-1)\}$. On the right: the configuration $X_4\cup\{\bar x\}$, where $\bar x\in I:=\{(2,t)\,:\,t\in [0,1]\}$; the segment $I$ is represented by the dashed line.}
    \label{nonvero}
\end{figure}
%%%%%%%%%%%%%%%%%%%%%%%%%%%%%%
%%%%%%%%%%%%%%%%%%%%%%%%%%%%%%
%%%%%%%%%%%%%%%%%%%%%%%%%%%%%%
In order to prove Theorem \ref{mainsquare} above we need to introduce some notation and to prove some auxiliary results.
For every $X\in\A$ we consider
the graph $\G(X)=(X,\Ed(X))$ generated by $X$ taking
\begin{equation}\label{generated_edges}
\Ed(X):=\{\{x,y\}\,:\,x,y\in X, \|x-y\|_\infty=1\}.
\end{equation}
In what follows, the dependence on $\G$ of some objects (such as, for instance, $\Fsf$) might be replaced, with a little abuse of notation, by the dependence on $X$.
We start with a simple but crucial observation: Minimizing $\E$ in $\A_N$ is equivalent to minimizing $2\E+8N$. By construction, for every $X\in\A_N$,
$$
2\E(X)+8N=\F_N(X),
$$
where $\F_N$ is the excess energy defined in \eqref{excess}.
Therefore, we will use all the objects and results introduced in Section \ref{sec:energy_decomposition} to prove our main theorem.

To this end, we prove Lemma \ref{crossquadr} and Lemma \ref{lemma:minimal_angle} below, showing that for every $X\in\A$, the graph $\G(X)=(X,\Ed(X))$
satisfies property \eqref{crossass} and \eqref{maxdeg}, so that $\G(X)$ is an admissible graph for every $X\in\A$.
%%%%%%%%%%%%%%%%%%%%%%%%%%%%%%
%%%%%%%%%%%%%%%%%%%%%%%%%%%%%%
%%%%%%%%%%%%%%%%%%%%%%%%%%%%%%
\begin{lemma}[Crossing edges]\label{crossquadr}
Let $X\in\A$ and let $x_1,x_2,x_3,x_4\in X$ be such that $\{x_1,x_3\}, \{x_2,x_4\}\in\Ed(X)$ and $\#((x_1,x_3)\cap (x_2,x_4))=1$\,. Then the points $x_j$ are the vertices of a unit square with sides parallel to the cartesian axes.
%$\{x_j,x_k\}\in\Ed(X)$ for every $j,k=1,\ldots,4$ with $k\neq j$ and 
\end{lemma}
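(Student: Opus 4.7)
The plan is to place the crossing point at the origin, parameterize the four points along the two diagonals, and then exhaust the remaining admissibility inequalities $\|x_i-x_j\|_\infty \geq 1$ (which hold because $X \in \A$) on the four pairs that are not the two diagonal pairs.

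More precisely, I would let $p$ denote the unique point of $(x_1, x_3) \cap (x_2, x_4)$, translate so that $p = 0$, and set $w := x_3 - x_1$, $z := x_4 - x_2$, which satisfy $\|w\|_\infty = \|z\|_\infty = 1$. Since $p$ lies in the interior of both open segments, one has
\[
x_1 = -sw, \quad x_3 = (1-s)w, \quad x_2 = -tz, \quad x_4 = (1-t)z
\]
for some $s, t \in (0, 1)$, and $w, z$ must be linearly independent (two collinear open segments cannot meet in exactly one point). Applying the lower bound $\|x_i - x_j\|_\infty \geq 1$ to each of the four non-diagonal pairs and combining with the triangle inequality for $\|\cdot\|_\infty$ (e.g.\ $\|(1-s)w + tz\|_\infty \leq (1-s)+t$) immediately yields $t \geq s$, $s \geq t$, $s + t \geq 1$, $s + t \leq 1$, so that $s = t = \tfrac12$. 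In particular, all four of these triangle inequalities must in fact be equalities.

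The only real step left is to extract the coordinate structure of $w$ and $z$ from those equality cases, and this is where I expect the main care to go. Equality in the $\ell_\infty$-triangle inequality $\|\tfrac12(w + z)\|_\infty = 1$ forces a coordinate direction along which $w$ and $z$ both take value $\pm 1$ with matching signs; similarly $\|\tfrac12(w - z)\|_\infty = 1$ forces another coordinate along which they both take value $\pm 1$ with opposite signs. Since $w$ and $z$ are linearly independent, these two coordinates must be distinct, and after swapping $x_1 \leftrightarrow x_3$, $x_2 \leftrightarrow x_4$, and relabelling axes if necessary, one may take $w = (1, 1)$ and $z = (1, -1)$. This gives $x_j \in \{\pm\tfrac12\} \times \{\pm\tfrac12\}$, which are the vertices of a unit square with sides parallel to the coordinate axes.
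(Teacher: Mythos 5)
Your proof is correct, and it takes a genuinely different route from the paper's. The paper exploits the convexity of the quadrilateral formed by the crossing diagonals and works in the Euclidean norm: using the law of cosines together with the comparison $\|a\|_\infty\le |a|\le\sqrt2\,\|a\|_\infty$, it shows each internal angle is at most $\tfrac{\pi}{2}$, hence all four equal $\tfrac{\pi}{2}$, so the quadrilateral is a rectangle; the equality analysis then forces unit, axis-parallel sides. You instead stay entirely inside the $\ell_\infty$ framework: parameterizing the four points along the two diagonals and applying the triangle inequality to the four non-diagonal pairs pins the crossing point to be the common midpoint ($s=t=\tfrac12$), and the equality cases of the $\ell_\infty$ triangle inequality then determine the coordinates of $w$ and $z$. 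Your version avoids any trigonometric or Euclidean input and makes explicit the midpoint property, at the cost of a slightly longer case analysis of the equality conditions; the paper's version is shorter given standard Euclidean facts and reuses the same ``angle $\ge\tfrac\pi4$/$\le\tfrac\pi2$'' philosophy that appears elsewhere in their arguments. One tiny imprecision: the distinctness of the two witnessing coordinates does not really follow from the linear independence of $w$ and $z$, but rather from the fact that a single coordinate cannot simultaneously satisfy $w_i=z_i=\pm1$ and $w_i=-z_i=\pm1$; this is immediate, so it is a matter of phrasing rather than a gap.
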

%%%%%%%%%%%%%%%%%%%%%%%%%%%%%%
%%%%%%%%%%%%%%%%%%%%%%%%%%%%%%
%%%%%%%%%%%%%%%%%%%%%%%%%%%%%%
\begin{proof}
Setting $|a|:=\sqrt{|a_1|^2+|a_2|^2}$ for any $a\in\R^2$\,, we start by noticing 
 that 
 \begin{equation}\label{norme}
 \|a\|_{\infty}\le |a|\le \sqrt 2\|a\|_{\infty}\qquad\qquad\textrm{for every }a\in\R^2,
 \end{equation}
 where the first inequality is an equality if and only if either $a=\lambda e_1$ or $a=\lambda e_2$ for some $\lambda\in\R$\,.
 
 For any $j,k=1,\ldots,4$ with $k\neq j$\,, we set $l_{j,k}:=|x_j-x_k|$\,. 
 By assumption, the quadrilateral $Q$ having vertices at the points $x_1,\,x_2,\, x_3,\,x_4$ is convex.
 Moreover, we denote by $\alpha_{i,j,k}$ the convex angle with vertex in $x_j$ that is spanned by the segments $(x_i,x_j)$ and $(x_j,x_k)$\,.
 Since $X\in\A$\,, 
 %$l_{1,2},\, l_{2,3}\ge 1$ so that, 
 by the Cosine Theorem and by \eqref{norme}, we have
 \begin{equation}\label{unica}
 \begin{aligned}
2=&\,2\|x_1-x_3\|_\infty^2\ge l^2_{1,3}=\, l_{1,2}^2+l_{2,3}^2-2l_{1,2}l_{2,3}\cos \alpha_{1,2,3}\\
\ge&\,\|x_1-x_2\|_{\infty}^2+\|x_2-x_3\|_{\infty}^2-2l_{1,2}l_{2,3}\cos \alpha_{1,2,3}\\
\ge&\, 2-2l_{1,2}l_{2,3}\cos \alpha_{1,2,3}\,,
\end{aligned}
\end{equation}
whence we deduce that $0<\alpha_{1,2,3}\le \frac\pi 2$\,. 
Analogously, one can show that $0<\alpha_{2,3, 4},\,\alpha_{3,4,1},\, \alpha_{4,1,2}\le \frac\pi 2$\,, whence we deduce that the internal angles of the quadrilateral $Q$ are all equal to $\frac\pi 2$\,. It follows that $Q$ is a rectangle and that all the inequalities in \eqref{unica} are actually equalities. In particular, $l_{1,2}=\|x_1-x_2\|_{\infty}=1=\|x_2-x_3\|_{\infty} =l_{2,3}$ and $l_{1,3}=\sqrt 2$\,; therefore, $Q$ is a unit square with sides parallel to the cartesian axes.
\end{proof}
%%%%%%%%%%%%%%%%%%%%%%%%%%%%%%
%%%%%%%%%%%%%%%%%%%%%%%%%%%%%%
%%%%%%%%%%%%%%%%%%%%%%%%%%%%%%

%%%%%%%%%%%%%%%%%%%%%%%%%%%%%%
%%%%%%%%%%%%%%%%%%%%%%%%%%%%%%
%%%%%%%%%%%%%%%%%%%%%%%%%%%%%%
\begin{lemma}[Minimal angle]\label{lemma:minimal_angle}
    Let $X\in\A$. The following facts hold true.
    \begin{enumerate}
        \item A triangular equilateral face (with respect to $\|\cdot\|_\infty$) necessarily has either a horizontal or a vertical side (or both).
        \item  Let $x', x,x''\in X$ be such that $\{ x,x'\},\{x,x''\}\in\Ed(X)$. Then $\widehat{x' xx''}\ge\tfrac{\pi}{4}$, and equality holds if and only if  $x', x,x''$ are the vertices of an isosceles right triangle, with horizontal and vertical catheti of length 1.
        \item Given $x\in X$, we have that $\F(x)\ge 0$.
        Moreover, if $\F(x)=0$ then $x$ is crystallized, namely it is surrounded by four crystallized square faces belonging to $\Fsf_\boxtimes(X)$. In particular, $x$ does not belong to any planar face $F\in \Fsf(X)\setminus \Fsf_\boxtimes(X)$.
    \end{enumerate}
\end{lemma}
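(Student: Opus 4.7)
The plan is to handle the three parts in order, with (2) providing the essential geometric input and (3) following quickly from it.

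\textbf{Part (1).} After translating, I would place one vertex of the triangle at the origin so that the other two vertices $p_2,p_3$ lie on the $\|\cdot\|_\infty$-unit sphere $\partial[-1,1]^2$. By the $D_4$-symmetry of this square I may further assume $p_2=(1,a)$ with $a\in[0,1]$. Then I would explicitly compute the intersection of the two $\|\cdot\|_\infty$-unit circles centered at $0$ and at $p_2$: a short case analysis on the four sides of each circle shows this intersection consists of only the two candidate points $(1,a-1)$ and $(0,1)$. In either choice of $p_3$ one of the three sides of the resulting triangle is vertical (namely $p_2p_3$ or $p_1p_3$, respectively), while the degenerate boundary values $a=0$ and $a=1$ produce an axis-aligned side directly.

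\textbf{Part (2).} This is the technical core of the lemma. After translating I assume $x=0$, so $x',x''\in\partial[-1,1]^2$. Using the $D_4$-symmetry I may further assume $x'=(1,a)$ with $a\in[-1,1]$ on the right side of the unit square. The proof then splits into three cases according to the side of $\partial[-1,1]^2$ containing $x''$ (same as $x'$, adjacent, or opposite). In each case the admissibility constraint $\|x'-x''\|_\infty\ge 1$ confines $x''$ to a specific sub-arc, and a short polar-angle computation yields $\widehat{x'0x''}\ge\pi/4$. The equality case always forces $\{0,x',x''\}$ to be three vertices of an axis-aligned unit square, which is precisely the isosceles right triangle with horizontal and vertical catheti of length $1$ described in the statement. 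The main obstacle is the bookkeeping across subcases, but each individual subcase reduces to a short monotonicity argument for $\arctan$.

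\textbf{Part (3).} The inequality $\F(x)\ge 0$ is now immediate: the $\deg(x)$ angles between consecutive neighbors of $x$ sum to $2\pi$ and each is at least $\pi/4$ by (2), so $\deg(x)\le 8$. In the equality case $\F(x)=0$, every consecutive angle must equal exactly $\pi/4$, so the rigidity in the equality case of (2) forces each pair of consecutive neighbors of $x$, together with $x$, to form a unit axis-aligned isosceles right triangle. This places the eight neighbors of $x$ at exactly $x+v$ with $v\in\{-1,0,1\}^2\setminus\{0\}$. The four axis-aligned unit squares meeting at $x$ then have all four sides and both diagonals of $\|\cdot\|_\infty$-length one, so all their edges lie in $\Ed(X)$ and these four faces belong to $\Fsf_\boxtimes(X)$. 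Finally, the closures of these crystallized squares tile $x+[-1,1]^2$, so every face in $\Fsf(X)\setminus\Fsf_\boxtimes(X)$ is disjoint from this neighborhood by the very definition of planar face; in particular $x$ does not belong to any such face.
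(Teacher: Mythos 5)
Your proposal is correct, and parts (1) and (3) follow essentially the paper's own route: the same normalization and two-point intersection argument for (1), and for (3) the same chain ``angles $\ge\pi/4$ summing to $2\pi$ gives $\deg(x)\le 8$, equality forces all angles equal to $\pi/4$, hence eight neighbours at $x+v$ with $v\in\{-1,0,1\}^2\setminus\{0\}$ and four crystallized squares around $x$'' (you spell out the last step in more detail than the paper, which is welcome). The genuine difference is in part (2), the core of the lemma. The paper first argues that the minimal angle is attained when the triangle $x,x',x''$ is equilateral for $\|\cdot\|_\infty$ (i.e.\ $\|x'-x''\|_\infty=1$), then invokes part (1) to reduce to the explicit configuration $x=0$, $x'=(1,0)$, $x''=(t,\pm1)$, and finishes with the formulas $\alpha_1=\arctan\tfrac1t$, $\alpha_2=\arctan\tfrac1{1-t}$ and the strict concavity of the third angle $\pi-\arctan\tfrac1t-\arctan\tfrac1{1-t}$ on $[0,1]$. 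You instead bypass the reduction to the equilateral case and run a direct case analysis on which side of $\partial[-1,1]^2$ contains $x''$ relative to $x'$ (same, adjacent, opposite), using only the admissibility constraint $\|x'-x''\|_\infty\ge 1$. I checked the three cases and they do close: on the same side one minimizes $\arctan a-\arctan(a-1)$, which equals $\pi/4$ exactly at the endpoints $a\in\{0,1\}$; on an adjacent side the constraint forces either $a\le 0$ or $t\le 0$ and a one-line angle comparison gives $\pi/4$ with equality only at the square configurations; on the opposite side the angle is at least $\pi/2$. Your route is more computational but avoids having to justify the paper's (slightly delicate, and in the paper only asserted) claim that the minimum is attained in the equilateral configuration; the paper's route is shorter once that reduction is granted and reuses part (1) structurally. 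Both yield the same equality characterization.
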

%%%%%%%%%%%%%%%%%%%%%%%%%%%%%%
%%%%%%%%%%%%%%%%%%%%%%%%%%%%%%
%%%%%%%%%%%%%%%%%%%%%%%%%%%%%%
\begin{proof}
    (1) Up to symmetries, we can assume that two vertices of the face are the origin and the point $x=(1,x_2)$, with $0\le x_2\le 1$. We can assume that the first inequality is strict otherwise we are already done. It is easy to see that then the third point $y$ must be either $(1,x_2-1)$, or $(0,1)$. In the former case the side $[x,y]$ is vertical, in the latter the side $[0,y]$ is vertical, and in either case the conclusion follows.
    
    (2) Up to symmetries, we can assume that $x=0$ and $x'=(1,x_2)$ with $0\le x_2\le 1$. Then the minimal angle between $[x',x]$ and $[x,x'']$ appears when $\|x''\|_\infty=1$, namely when the points $0$, $x$ and $y$ form an equilateral triangle (with respect to the distance $\|\cdot\|_\infty$). Therefore we can assume that $x'$ and $x''$ are sides of an equilateral triangle, which - by item (1) - necessarily has at least one horizontal or vertical side. Let us assume without loss of generality that $x'=(1,0)$. Then, either $x''=(t,1)$ or $x''=(t,-1)$ with $0\le t\le 1$. Let us assume the former case by symmetry. Then the two angles at the base are given by $\alpha_1=\arctan \frac{1}{t}$ and $\alpha_2=\arctan\frac{1}{1-t}$. It is easily seen that $\alpha_1\ge\tfrac{\pi}{4}$, with equality if and only if $t=1$, and viceversa $\alpha_2\ge\tfrac{\pi}{4}$, with equality if and only if $t=0$. In either cases the conclusion follows. The third angle measures instead
    \[
    \pi-\arctan\frac{1}{t}-\arctan\frac{1}{1-t}.
    \]
    It is easy to see that this is a strictly concave function on $[0,1]$, and thus can reach its minimum only at the extremes, both of which correspond to an isosceles right triangle.

    (3) By item (2), we have that $\deg(x)\le 8$ for every $x\in X$, so that $\F(x)\ge 0$. 
   Now $\F(x)=0$ if and only if $\deg(x)=8$, which, again by item (2), implies that every angle between two edges at $x$ must be equal to $\tfrac{\pi}{4}$. It follows, again by item (2), that around $x$ eight isosceles right triangles appear, and thus $x$ is crystallized.
\end{proof}
%%%%%%%%%%%%%%%%%%%%%%%%%%%%%%
%%%%%%%%%%%%%%%%%%%%%%%%%%%%%%
%%%%%%%%%%%%%%%%%%%%%%%%%%%%%%
Proposition \ref{basicprop} below proves the first part of the statement of Theorem \ref{mainsquare}.
%%%%%%%%%%%%%%%%%%%%%%%%%%%%%%
%%%%%%%%%%%%%%%%%%%%%%%%%%%%%%
%%%%%%%%%%%%%%%%%%%%%%%%%%%%%%
\begin{proposition}\label{basicprop}
    Let $N\in\N$ and let $X_N\in\mathrm{argmin}_{X\in\A_N}\F_N(X)$.
    Then $\G(X_N)$ is connected. Moreover, if 
$N\ge 3$ then $\G(X_N)$, has no wire edges.
\end{proposition}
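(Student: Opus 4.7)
The plan is to establish both claims by contradiction: starting from a minimizer $X_N$, I will construct a competitor $\widetilde X_N\in\A_N$ with strictly more bonds (equivalently, strictly smaller $\E$), contradicting the assumed minimality.

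\textbf{Connectedness.} Suppose $\G(X_N)$ has at least two connected components and choose two of them, $C_1$ and $C_2$. Pick vertices $a\in C_1$ and $b\in C_2$ achieving $d:=\dist(C_1,C_2)=\|a-b\|_\infty$; by the definition of $\Ed(X_N)$ and the absence of a bond between the two components, $d>1$. Translate $C_1$ rigidly along the segment $t\mapsto a+t(b-a)$, $t\in[0,1]$, and consider the continuous function
\[
f(t):=\min\bigl\{\|p-q\|_\infty:p\in C_1+t(b-a),\ q\in X_N\setminus C_1\bigr\}.
\]
Since $f(0)=d>1$ and $f(1)\le\|(a+(b-a))-b\|_\infty=0$, the intermediate value theorem provides some $t^*\in(0,1)$ with $f(t^*)=1$. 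Setting $\widetilde X_N:=(X_N\setminus C_1)\cup(C_1+t^*(b-a))$, all pairwise sup-distances are $\ge 1$ (so $\widetilde X_N\in\A_N$), every previously existing bond is preserved (internal to the two blocks and untouched by a rigid translation), and at least one new bond appears (a pair at sup-distance exactly $1$ realized by $f(t^*)$). Hence $\E(\widetilde X_N)<\E(X_N)$, a contradiction.

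\textbf{No wire edges for $N\ge 3$.} Suppose $\G(X_N)$ contains a wire edge $e=\{x,y\}$. Any edge lying in a cycle bounds a face on the side enclosed by the cycle (by the planarity of $\G(X_N)$ modulo the $\Fsf_\boxtimes$-quadrilaterals, which only contribute diagonals internal to the quadrilateral face), so wire edges are necessarily bridges. Removing $e$ therefore disconnects $\G(X_N)$ into $\G_{C_x}$ and $\G_{C_y}$, with $x\in C_x$ and $y\in C_y$. Since $N\ge 3$, I may assume $|C_y|\ge 2$, and by connectedness of $\G_{C_y}$ the vertex $y$ has a neighbor $z\in C_y\setminus\{x\}$, so $\|y-z\|_\infty=1$. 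A direct computation (the sup-unit balls centered at $y$ and $z$ are axis-aligned unit squares meeting along a side or a corner) shows that the set of points at sup-distance exactly $1$ from both $y$ and $z$ is non-empty and contains points on each of the two sides of the segment $[y,z]$. I would then choose such a point $p$ on the side of $[y,z]$ that, by the wire property of $e$, is free of obstructions from the remainder of $X_N\setminus C_x$, and consider the rigid translate $\widetilde X_N:=(X_N\setminus C_x)\cup(C_x+v)$ with $v:=p-x$. By construction $x+v=p$ is bonded to both $y$ and $z$ in $\G(\widetilde X_N)$: the old bond $e$ survives and a new bond $\{p,z\}$ appears, while every bond internal to $C_x$ or $C_y$ is unaffected. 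The upshot is a strict net gain of at least one bond, contradicting minimality.

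\textbf{Main obstacle.} The combinatorial gain of a bond is immediate; the real work lies in guaranteeing that $\widetilde X_N\in\A_N$, that is, that no pair $(p',q)\in(C_x+v)\times C_y$ has $\|p'-q\|_\infty<1$. In the leaf sub-case $|C_x|=1$ this reduces to the condition $\|p-q\|_\infty\ge 1$ for every $q\in C_y\setminus\{y,z\}$, which can be secured by selecting $p$ on the extremal (wire) side of the configuration (for instance, above the topmost vertex of $X_N\setminus\{x\}$ when $y$ is itself topmost, exploiting the rigidity given by Lemma \ref{lemma:minimal_angle}). In the sub-case $|C_x|\ge 2$ the rigid translation of a non-trivial block may a priori bring remote vertices of $C_x$ into conflict with vertices of $C_y$; here I would fall back on the same continuity/IVT mechanism used for connectedness, translating $C_x$ along a suitable path from its original position toward $p$ and stopping at the first time the inter-block minimum sup-distance would otherwise drop below $1$. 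The wire-edge hypothesis enters precisely at this step, ensuring that at least one such path exists along which new bonds form without admissibility being violated.
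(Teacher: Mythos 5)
Your proposal is correct and follows essentially the same route as the paper: both claims are proved by rigidly translating a (sub)component until a first new bond forms, with the paper phrasing the wire-edge step as sliding the detached component along the sup-norm sphere $S(x)$ and invoking minimality. The only detail to pin down in your fallback is that the ``suitable path'' must keep $x$ on the sphere $S(y)$ (slide along the sphere, not along a straight segment, which for the sup norm can dip inside the unit ball), so that the existing bond $\{x,y\}$ and admissibility with respect to $y$ are both preserved at the first contact time.
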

%%%%%%%%%%%%%%%%%%%%%%%%%%%%%%
%%%%%%%%%%%%%%%%%%%%%%%%%%%%%%
%%%%%%%%%%%%%%%%%%%%%%%%%%%%%%
\begin{figure}[htbp]
    \centering
    \def\svgwidth{0.5\columnwidth}
    %% Creator: Inkscape 1.0.2 (e86c870879, 2021-01-15), www.inkscape.org
%% PDF/EPS/PS + LaTeX output extension by Johan Engelen, 2010
%% Accompanies image file '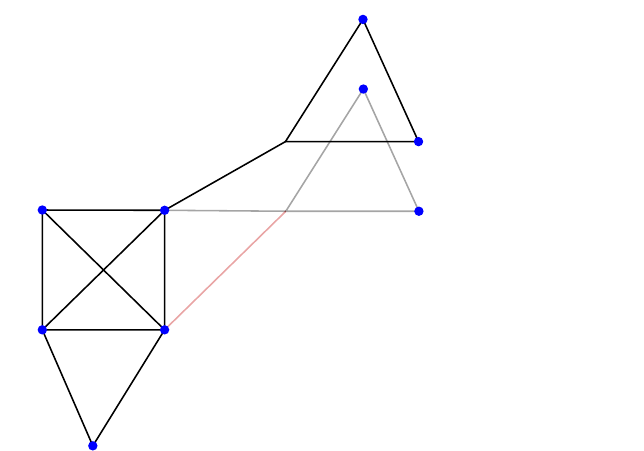' (pdf, eps, ps)
%%
%% To include the image in your LaTeX document, write
%%   \input{<filename>.pdf_tex}
%%  instead of
%%   \includegraphics{<filename>.pdf}
%% To scale the image, write
%%   \def\svgwidth{<desired width>}
%%   \input{<filename>.pdf_tex}
%%  instead of
%%   \includegraphics[width=<desired width>]{<filename>.pdf}
%%
%% Images with a different path to the parent latex file can
%% be accessed with the `import' package (which may need to be
%% installed) using
%%   \usepackage{import}
%% in the preamble, and then including the image with
%%   \import{<path to file>}{<filename>.pdf_tex}
%% Alternatively, one can specify
%%   \graphicspath{{<path to file>/}}
%% 
%% For more information, please see info/svg-inkscape on CTAN:
%%   http://tug.ctan.org/tex-archive/info/svg-inkscape
%%
\begingroup%
  \makeatletter%
  \providecommand\color[2][]{%
    \errmessage{(Inkscape) Color is used for the text in Inkscape, but the package 'color.sty' is not loaded}%
    \renewcommand\color[2][]{}%
  }%
  \providecommand\transparent[1]{%
    \errmessage{(Inkscape) Transparency is used (non-zero) for the text in Inkscape, but the package 'transparent.sty' is not loaded}%
    \renewcommand\transparent[1]{}%
  }%
  \providecommand\rotatebox[2]{#2}%
  \newcommand*\fsize{\dimexpr\f@size pt\relax}%
  \newcommand*\lineheight[1]{\fontsize{\fsize}{#1\fsize}\selectfont}%
  \ifx\svgwidth\undefined%
    \setlength{\unitlength}{308.70818907bp}%
    \ifx\svgscale\undefined%
      \relax%
    \else%
      \setlength{\unitlength}{\unitlength * \real{\svgscale}}%
    \fi%
  \else%
    \setlength{\unitlength}{\svgwidth}%
  \fi%
  \global\let\svgwidth\undefined%
  \global\let\svgscale\undefined%
  \makeatother%
  \begin{picture}(1,0.70050764)%
    \lineheight{1}%
    \setlength\tabcolsep{0pt}%
    \put(0,0){\includegraphics[width=\unitlength,page=1]{square_graph_no_wires.pdf}}%
    \put(0.23631269,0.40204852){\makebox(0,0)[lt]{\lineheight{1.25}\smash{\begin{tabular}[t]{l}$x$\end{tabular}}}}%
    \put(0.41047719,0.51069686){\makebox(0,0)[lt]{\lineheight{1.25}\smash{\begin{tabular}[t]{l}$x'$\end{tabular}}}}%
    \put(-0.0043022,0.17551141){\makebox(0,0)[lt]{\lineheight{1.25}\smash{\begin{tabular}[t]{l}$\mathsf{C}$\end{tabular}}}}%
    \put(0,0){\includegraphics[width=\unitlength,page=2]{square_graph_no_wires.pdf}}%
    \put(0.623836,0.66112987){\makebox(0,0)[lt]{\lineheight{1.25}\smash{\begin{tabular}[t]{l}$\mathsf{C}'$\end{tabular}}}}%
  \end{picture}%
\endgroup%

    \caption{Reference for the proof of Proposition \ref{basicprop}. The wire edge is $\{x,x'\}$. By ``sliding'' the whole component $\mathsf{C}'$ along $S(x)$ (dotted) we can create a new bond (shown in red) thus proving that the original configuration is not minimal.}
    \label{fig:no_wires}
\end{figure}
%%%%%%%%%%%%%%%%%%%%%%%%%%%%%%
%%%%%%%%%%%%%%%%%%%%%%%%%%%%%%
%%%%%%%%%%%%%%%%%%%%%%%%%%%%%%
\begin{proof}
Assume by contradiction that $\G(X_N)$ has (at least) two connected components $\mathsf{C}$ and $\mathsf{C}'$\,. Then, there exists a translation $\mathcal T$ such that $\mathcal T(\mathsf{C}')$ is connected by (at least) one edge to $\mathsf{C}$\,, thus contradicting the minimality of $X_N$\,.
Therefore, all minimizers are connected.

Assume now that $N\ge 3$\,.  
Assume by contradiction that there exists a wire edge $\{x,x'\}\in \Ed(X_N)$\,. 
Without loss of generality we can assume that
%\begin{equation}\label{gradomin}
$\deg(x)\ge 2$ and $\deg(x')\ge 2$\,.
%\end{equation}
Furthermore let $\mathsf{C}$ and $\mathsf{C}'$ be the two connected components of the graph $\G(X_N)$ obtained by removing from $\G(X_N)$ the wire edge $\{x,x'\}$ but keeping the vertices $x$ and $x'$\,. 
%Then, by \eqref{gradomin}, neither $\mathsf{C}$ nor $\mathsf{C}'$ are empty. 
We can assume without loss of generality that $x\in\mathsf{C}$ and $x'\in\mathsf{C}'$\,. Therefore, setting $S(x):=\{z\in\R^2\,:\,\|z-x\|_{\infty}=1\}$ and letting $x'$\,, and consequently $\mathsf{C}'$\,, move along $S(x)$ we can create a new edge connecting $\mathsf{C}'$ to another connected component of $\G(X_N)$\,, thus contradicting the minimality of $X_N$\,.
\end{proof}
%%%%%%%%%%%%%%%%%%%%%%%%%%%%%%
%%%%%%%%%%%%%%%%%%%%%%%%%%%%%%
%%%%%%%%%%%%%%%%%%%%%%%%%%%%%%
The next statement is a rewriting of Brass's results \cite{Brass} in our notation.
%%%%%%%%%%%%%%%%%%%%%%%%%%%%%%
%%%%%%%%%%%%%%%%%%%%%%%%%%%%%%
%%%%%%%%%%%%%%%%%%%%%%%%%%%%%%
\begin{proposition}\label{isomorphism}
Let $X\in\A$. Then there exists
a map $\TT=\TT_{X}:X\to\R^2$ such that $\TT X\in \A^{\Z^2}$,
\begin{equation}\label{dibrass0}
    \|\TT x-\TT y\|_\infty\ge 1\qquad\textrm{for every } x,y\in X\textrm{ with }\|x-y\|_\infty\ge 1
\end{equation}
and
\begin{equation}\label{dibrass}
    \|\TT x-\TT y\|_\infty=1\qquad\textrm{for every } x,y\in X\textrm{ with }\|x-y\|_\infty=1.
\end{equation}
Moreover, if $X_N\in\mathrm{argmin}_{X\in\A_N}\F_N(X)$, then also $\TT X_N\in\mathrm{argmin}_{X\in\A_N}\F_N(X)$ and
$\TT$ is an isomorphism between $\G(X_N)$ and $\G(\TT X_N)$, that is, for every $x,y\in X_N$
\begin{equation}\label{isomor}
\|x-y\|_{\infty}=1\quad\text{ if and only if }\quad \|\TT x-\TT y\|_\infty=1.
\end{equation}
%Let $X\in\A$\,. Then there exists
%a continuous map $T:\R^2\to\R^2$ such that 
%$\overline X=T(X)\in\A^{\Z^2}$ and 
%\begin{equation}\label{dibrass}
%  \F(\overline X)\le \F(X).  
%\end{equation}
%Moreover, if $X_N\in\mathrm{argmin}_{X\in\A_N}\F(X)$\,, then also $\overline X_N\in\mathrm{argmin}_{X\in\A_N}\F(X)$ and
%$T$ is an isomorphism between $\G(X_N)$ and $\G(\overline X_N)$, that is, for every $x,y\in X$
%\begin{equation}\label{isomor}
%\|x-y\|_{\infty}=1\quad\text{ if and only if }\quad \|Tx-Ty\|_\infty=1.
%\end{equation}
\end{proposition}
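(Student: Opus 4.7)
The plan is to first invoke Brass's construction from \cite{Brass} to produce a map $\TT$ satisfying $\TT X\in\A^{\Z^2}$ together with \eqref{dibrass0} and \eqref{dibrass}, and then, in the minimizing case $X=X_N$, to upgrade it to a graph isomorphism via a short edge-counting argument based on the identity $\F_N(Y)=8N-2\sharp\Ed(Y)$.

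For the first part, I would recall (or reproduce) Brass's coordinate-wise snapping procedure. Sort the distinct $x$-coordinates $a_1<a_2<\ldots<a_k$ of the points of $X$ and define integer values $b_1\le b_2\le\ldots\le b_k$ inductively so that every unit $\|\cdot\|_\infty$-bond whose horizontal projection has length one is mapped to a pair with horizontal projection of length one, while no pair of images ends up at $\|\cdot\|_\infty$-distance less than $1$; an analogous construction on the $y$-coordinates yields the other component of $\TT$. The main obstacle (and the real content of Brass's argument) is to check that the two $1$-dimensional procedures can be carried out consistently, producing a well-defined $\TT:X\to\R^2$ with image in $\Z^2$ that satisfies both \eqref{dibrass0} and \eqref{dibrass}. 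The essential rigidity that makes this work is that, since $X\in\A$, any two points whose separation in one coordinate is strictly less than $1$ must be separated by at least $1$ in the other. I would treat this part as a black box from \cite{Brass}.

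Given $\TT$, everything else follows by counting edges. First, \eqref{dibrass0} forces $\TT$ to be injective on $X_N$, since two coinciding images would have $\|\cdot\|_\infty$-distance zero; this gives $\sharp\TT X_N=N$, so that $\TT X_N\in\A_N^{\Z^2}\subseteq\A_N$. Second, by \eqref{dibrass} the induced map on unordered pairs injects $\Ed(X_N)$ into $\Ed(\TT X_N)$, hence $\sharp\Ed(\TT X_N)\ge\sharp\Ed(X_N)$. Since $\F_N(Y)=8N-2\sharp\Ed(Y)$ for every $Y\in\A_N$, this rewrites as $\F_N(\TT X_N)\le\F_N(X_N)$. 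The minimality of $X_N$ then forces equality throughout, so simultaneously $\TT X_N\in\mathrm{argmin}_{X\in\A_N}\F_N(X)$ and $\sharp\Ed(\TT X_N)=\sharp\Ed(X_N)$; this last equality promotes the injection $\Ed(X_N)\hookrightarrow\Ed(\TT X_N)$ provided by \eqref{dibrass} to a bijection, which is exactly the equivalence \eqref{isomor}.
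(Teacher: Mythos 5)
Your proposal is correct and follows essentially the same route as the paper: the construction of $\TT$ is delegated to Brass, and the second part is the energy comparison $\F_N(\TT X_N)\le\F_N(X_N)\le\F_N(\TT X_N)$ forced by \eqref{dibrass0}--\eqref{dibrass} and minimality, with the resulting equality of edge counts upgrading the edge injection from \eqref{dibrass} to the bijection \eqref{isomor}. The paper phrases this last step as a contradiction (a gained bond would require a compensating lost bond, violating \eqref{dibrass}), which is the same counting argument you give; your explicit remark that \eqref{dibrass0} yields injectivity of $\TT$, so that $\TT X_N\in\A_N$, is a small point the paper leaves implicit.
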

%%%%%%%%%%%%%%%%%%%%%%%%%%%%%%
%%%%%%%%%%%%%%%%%%%%%%%%%%%%%%
%%%%%%%%%%%%%%%%%%%%%%%%%%%%%%
\begin{proof}
The first statement follows by the argument in \cite[page 209]{Brass}.
Now, if $X_N$ minimizes $\F_N$ in $\A_N$, in view of \eqref{dibrass0} and \eqref{dibrass}, we have that 
\begin{equation}\label{minim}
  \F_N(\TT X_N)\le \F_N(X_N) \le  \F_N(\TT X_N),
\end{equation}
and hence also $\TT X_N$ is a minimizer of $\F_N$ in $\A_N$. Finally, one implication in \eqref{isomor} is exactly \eqref{dibrass} whereas, in order to prove the opposite implication, we adopt a simple contradiction argument. Indeed, if there exist $ x',y'\in X$ such that $\|\TT x'-\TT y'\|_{\infty}=1$ and $\|x'-y'\|_{\infty}>1$, then, by \eqref{minim}, there should exist $ x'',y''\in X$ such that $\|\TT x''-\TT y''\|_{\infty}>1$ and $\|x''-y''\|_{\infty}=1$ thus contradicting \eqref{dibrass}.
\end{proof}
%%%%%%%%%%%%%%%%%%%%%%%%%%%%%%
%%%%%%%%%%%%%%%%%%%%%%%%%%%%%%
%%%%%%%%%%%%%%%%%%%%%%%%%%%%%%
With Proposition \ref{isomorphism} in hand, we proceed as follows: we first prove (Proposition \ref{pieno}) the analogue of Theorem \ref{mainsquare} in $\A^{\Z^2}$; then, we prove some qualitative rigidity properties on the minimizers of $\F_N$ in $\A_N^{\Z^2}$, with $N\ge 6$, (from Lemma \ref{angoliconcavi} to Lemma \ref{ancorapiurig}) which show that, given a minimizer $X_N$ of $\F_N$ in $\A_N^{\Z^2}$, then $\F_N(\TT X_N)=\F_N(X_N)$ if and only if $X\in \A_N^{\Z^2}$. This argument is rigorously formalized at the end of this section.
%%%%%%%%%%%%%%%%%%%%%%%%%%%%%%
%%%%%%%%%%%%%%%%%%%%%%%%%%%%%%
%%%%%%%%%%%%%%%%%%%%%%%%%%%%%%
The next lemma shows the monotonicity of the minimal value of the energy $\F_N$ with respect to the number $N$ of particles.
%%%%%%%%%%%%%%%%%%%%%%%%%%%%%%%%%%%%%
%%%%%%%%%%%%%%%%%%%%%%%%%%%%%%%%%%%%%
%%%%%%%%%%%%%%%%%%%%%%%%%%%%%%%%%%%%%
\begin{lemma}%[Monotonicity of the energy $\F$]
\label{lemma:monotonicity}
    The function
    \[
    N\mapsto \min_{X\in \A_N} \F_N(X)
    \]
    is non-decreasing in $N$. Equivalently, the maximum number of edges for finite energy configurations with $N$ points can increase by at most $4$ when we pass from $N-1$ to $N$.
\end{lemma}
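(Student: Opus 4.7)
The plan is to prove the equivalent statement $M(N) \le M(N-1) + 4$, where $M(N):=\max_{X\in\A_N}\#\Ed(X)$. The equivalence with the monotonicity of $N \mapsto \min_{\A_N}\F_N$ comes from the handshake identity $\sum_{x\in X}\deg(x)=2\#\Ed(X)$, which yields $\F_N(X)=8N-2\#\Ed(X)$ and hence
\[
\min_{\A_N}\F_N - \min_{\A_{N-1}}\F_{N-1} = 8 - 2\bigl(M(N)-M(N-1)\bigr).
\]
The right-hand side is nonnegative precisely when $M(N) - M(N-1) \le 4$.

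To prove the latter, I would pick $X_N \in \A_N$ with $\#\Ed(X_N) = M(N)$, so that $X_N$ is a minimizer of $\F_N$, and look for a vertex of degree at most $4$ to remove. Proposition~\ref{isomorphism} is the key reduction here: the map $\TT = \TT_{X_N}$ produces a configuration $\TT X_N \in \A_N^{\Z^2}$ which is still a minimizer of $\F_N$, and the isomorphism property \eqref{isomor} ensures $\#\Ed(\TT X_N) = \#\Ed(X_N) = M(N)$. Replacing $X_N$ with $\TT X_N$, I may assume $X_N \subset \Z^2$.

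Let $x^*$ be the lexicographic maximum of $X_N$ (largest first coordinate, ties broken by largest second coordinate), and translate so that $x^* = 0$. Every other $y \in X_N$ then satisfies either $y_1 \le -1$, or $y_1 = 0$ and $y_2 \le -1$. Since $X_N \subset \Z^2$, any bond neighbor of $x^*$ must lie in the set $\{y \in \Z^2 : \|y\|_\infty = 1\} = \{(\pm 1, 0), (0, \pm 1), (\pm 1, \pm 1)\}$; of these eight lattice points, only $(-1,-1)$, $(-1, 0)$, $(-1, 1)$, and $(0, -1)$ satisfy the lex-max constraint. Hence $\deg(x^*) \le 4$.

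Removing $x^*$ therefore produces $X_N \setminus \{x^*\} \in \A_{N-1}$ with $\#\Ed(X_N\setminus\{x^*\}) \ge M(N) - 4$, giving $M(N-1) \ge M(N) - 4$. The entire argument is short; the only nontrivial invocation is Proposition~\ref{isomorphism}, after which the degree bound at $x^*$ reduces to inspecting the four admissible lattice directions among the eight $\|\cdot\|_\infty$-unit neighbors of the origin in $\Z^2$.
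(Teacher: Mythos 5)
Your argument is correct, and the overall skeleton (the handshake identity $\F_N(X)=8N-2\#\Ed(X)$ plus the removal of a vertex of degree at most $4$) is exactly the paper's. Where you genuinely diverge is in how you certify the existence of such a vertex: the paper takes a vertex $x$ of the convex hull of $X_N$ and observes that, since by Lemma \ref{lemma:minimal_angle}(2) any two edges at $x$ subtend an angle at least $\tfrac{\pi}{4}$ while all edges at a convex-hull vertex fit inside an angle strictly less than $\pi$, one gets $\deg(x)\le 4$ directly, with no need to move the configuration. You instead first invoke Proposition \ref{isomorphism} to replace $X_N$ by the Brass deformation $\TT X_N\subset\Z^2$ (correctly checking, via \eqref{dibrass0}, injectivity of $\TT$ and, via \eqref{isomor}, preservation of the edge count for minimizers) and then read off the degree bound at the lexicographic maximum from the four admissible lattice directions. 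Both routes are valid and there is no circularity, since Proposition \ref{isomorphism} precedes the lemma and its proof does not use it; but your version imports Brass's nontrivial deformation into what the paper keeps as a purely elementary angle-counting step, so the paper's argument is more self-contained (and would survive even in settings where no crystallizing deformation onto $\Z^2$ is available), while yours buys a very concrete, inspection-level verification of the degree bound once the reduction to the lattice is granted.
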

%%%%%%%%%%%%%%%%%%%%%%%%%%%%%%%%%%%%%
%%%%%%%%%%%%%%%%%%%%%%%%%%%%%%%%%%%%%
%%%%%%%%%%%%%%%%%%%%%%%%%%%%%%%%%%%%%
\begin{proof}
    Let $X_N$ be any minimizer of $\F_N$ in $\A_N$. There exists a vertex $x\in X_N$ with $\deg(x)\le 4$. Indeed, it is sufficient to take $x$ as a vertex of the convex hull of $X_N$: by Lemma \ref{lemma:minimal_angle}(2) such vertex cannot have degree bigger than $4$. Then $X':=X_N\setminus \{x\}$ is a competitor in $\A_{N-1}$ and
    %whose total number of edges is 
    \[
    \#\Ed(X')=\#\Ed(X_N)-\deg(x)\ge \# \Ed(X_N)-4.
    \]
    This proves the last part of the statement. It follows that
    \begin{align*}
    \min_{X\in \A_N}\F_N(X) =\F_N(X_N)&=8N-2\# \Ed(X_N)\\
    &\ge 8N -2(\# \Ed(X')+4)\\
    &=8(N-1)-2\# \Ed(X')\\
    &=\F_{N-1}(X')\\
    &\ge \min_{X\in\A_{N-1}}\F_{N-1}(X).
    \end{align*}
    The proof is complete.
\end{proof}
%%%%%%%%%%%%%%%%%%%%%%%%%
%%%%%%%%%%%%%%%%%%%%%%%%%
%%%%%%%%%%%%%%%%%%%%%%%%%

%%%%%%%%%%%%%%%%%%%%%%%%%
%%%%%%%%%%%%%%%%%%%%%%%%%
%%%%%%%%%%%%%%%%%%%%%%%%%
As a consequence of Lemma \ref{lemma:monotonicity}, we prove that, for every $N\ge 3$, minimizers of $\F_N$ in $\A^{\Z^2}_N$ have only triangular and crystallized square faces.
%%%%%%%%%%%%%%%%%%%%%%%%%
%%%%%%%%%%%%%%%%%%%%%%%%%
%%%%%%%%%%%%%%%%%%%%%%%%%
%\subsection{Minimizers in \texorpdfstring{$\mathbb{Z}^2$}{Z2}}
\begin{proposition}\label{pieno}
%[Minimizers in $\Z^2$ have only triangular and crystallized square faces]
    Let $N\in\N$ with $N\ge 3$, and let
    \[
    X_N\in \mathrm{argmin}_{X\in \A_N^{\mathbb{Z}^2}} \F_N(X).
    \]
    Then every bounded face of $X_N$ is either a triangle or a crystallized square. 
\end{proposition}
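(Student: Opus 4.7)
The plan is to derive a contradiction with Lemma~\ref{lemma:monotonicity}: if $F$ is a bounded face of $\G(X_N)$ which is neither a triangle nor a crystallized square, I will find a lattice point $p\in\Z^2\setminus X_N$ at $\|\cdot\|_\infty$-distance $1$ from at least five points of $X_N$. Then $X_N\cup\{p\}\in\A_{N+1}^{\Z^2}$ has at least $\#\Ed(X_N)+5$ edges, contradicting the bound ``at most $4$ extra edges per added particle'' supplied by Lemma~\ref{lemma:monotonicity}.

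I would first dispose of the easy subcases. If $F\in\Fsf_\boxtimes(X_N)$, Lemma~\ref{crossquadr} forces $F$ to be an axis-aligned unit square, i.e.\ a crystallized square. If $F\in\Fsf_\planar$ has three vertices, each pair at $\|\cdot\|_\infty$-distance $1$, then Lemma~\ref{lemma:minimal_angle}(1) identifies it with a right isosceles triangle with horizontal and vertical legs. So it remains to rule out $F\in\Fsf_\planar$ with $k\ge 4$ vertices.

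Since $X_N\subset\Z^2$ and all edges of $F$ have Euclidean length $1$ or $\sqrt 2$, no non-vertex lattice point lies on $\partial F$, so Pick's theorem gives $\mathrm{area}(F)=I+\tfrac k2-1$, with $I$ the number of interior lattice points of $F$. Two further remarks govern the classification: a unit axis-aligned lattice square automatically carries both its diagonals in $\Ed(X_N)$ and therefore belongs to $\Fsf_\boxtimes$, not $\Fsf_\planar$, by \eqref{crossass}; and any two non-consecutive vertices of $F$ at $\|\cdot\|_\infty$-distance $1$ would form an edge of $\G(X_N)$ that splits $F$, so all such pairs must in fact be at $\|\cdot\|_\infty$-distance $\ge 2$. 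Combining these observations with the side-length constraint one checks that the only admissible $k=4$ face is the ``rotated square'' (up to translation and lattice symmetry) with vertices $(0,0),(1,1),(2,0),(1,-1)$; no $k=5$ face exists; and for $k\ge 6$ the face fits inside a small lattice window and contains an interior lattice point $p\in\Z^2\cap\mathrm{int}(F)$ which is $\|\cdot\|_\infty$-adjacent to at least five vertices of $F$, immediately supplying the required $p$.

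The remaining case---the rotated square---is the main obstacle, since its unique interior lattice point $p_0=(1,0)$ has only the four corners of $F$ as its $\|\cdot\|_\infty$-neighbors in $F$ and inserting $p_0$ produces only $4$ new edges. I would handle it in two steps. The eight $\Z^2$-neighbors of $p_0$ are the four corners of $F$ together with the four ``side'' points $(0,\pm 1),(2,\pm 1)$: if any of those side points already lies in $X_N$, then $p_0$ has at least $5$ neighbors in $X_N$ and we are done, so we may assume all four side points are absent. A swap argument (replacing any $x\in X_N$ by $p_0$ and comparing edge counts via the minimality of $X_N$) then forces every non-corner vertex of $X_N$ to have degree $\ge 4$ and every corner of $F$ to have degree $\ge 3$; in particular each of the four corners has at least one external $\Z^2$-neighbor in $X_N$ among its three ``outer'' positions just beyond the $3\times 3$ window $[0,2]\times[-1,1]$. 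A short finite case analysis over the resulting twelve external positions then produces a side point $q\in\{(0,\pm 1),(2,\pm 1)\}$ whose $\|\cdot\|_\infty$-neighborhood in $X_N$ contains the two adjacent corners of $F$ together with at least three of their external neighbors, giving $\ge 5$ neighbors of $q$ in $X_N$ and the desired contradiction.
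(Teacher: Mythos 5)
Your overall strategy --- contradicting Lemma \ref{lemma:monotonicity} by inserting lattice points into a bad face --- is the same as the paper's, but the key step for faces with $k\ge 6$ boundary edges contains a genuine gap. You assert that such a face ``fits inside a small lattice window and contains an interior lattice point $p$ which is $\|\cdot\|_\infty$-adjacent to at least five vertices of $F$.'' There is no a priori bound on the size of a bounded face of the bond graph: a face is just a hole in the configuration, and it can be arbitrarily large. For a concrete counterexample to your claim, take the face bounded by the cycle of lattice points $\{(i,j):|i|+|j|=k\}$ (all consecutive pairs are at $\|\cdot\|_\infty$-distance $1$, so this is a legitimate $4k$-gonal face), with all points $|i|+|j|\le k-1$ absent. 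A central interior lattice point has no neighbors in $X_N$ at all, and even the extremal interior points such as $(k-1,0)$ have only three guaranteed neighbors among the vertices of $F$ (namely $(k,0)$ and $(k-1,\pm1)$). So no single insertion yields the five edges you need. This is precisely why the paper does \emph{not} insert one point: it inserts \emph{all} $J$ interior lattice points of $F_0$ simultaneously and uses the energy decomposition of Theorem \ref{thm:decomposition_square} to compare $\F_N(X_N)$ with $\F_{N+J}(X')$ globally, obtaining a gain $\delta(F_0)-\sum_{F\subset F_0}\delta(F)\ge 3k-8-k=2k-8\ge 4$, which contradicts the monotonicity of Lemma \ref{lemma:monotonicity} regardless of the face's size or shape. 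Without the decomposition (or some substitute bookkeeping of the edges created by a multi-point insertion), your argument does not close in this case.

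The diamond case ($k=4$) is also not complete as written. After correctly reducing to the situation where the four side points $(0,\pm1),(2,\pm1)$ are absent, you defer to ``a short finite case analysis over the resulting twelve external positions'' that is claimed to produce a side point with five neighbors in $X_N$; but a corner of the diamond having degree $\ge 3$ only guarantees \emph{one} external neighbor, which gives a candidate side point only three guaranteed neighbors (two corners plus that one external point), and it is not evident how the analysis reaches five. The paper's route is different and worth comparing: it first shows $X_N$ cannot consist of diamonds only (a degree-$2$ convex corner could be moved to the diamond's center, gaining an edge), so by connectedness some triangle or crystallized square shares a vertex with the diamond; it then inserts \emph{two} points (the center of the diamond and one side point adjacent to that neighboring face), which together create at least $9$ new edges against the allowance of $8$ for two added particles from Lemma \ref{lemma:monotonicity}. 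If you want to keep a purely single-insertion argument here, you would need to actually exhibit the point with five neighbors in every configuration of the external positions, which your sketch does not do.
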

%%%%%%%%%%%%%%%%%%%%%%%%%
%%%%%%%%%%%%%%%%%%%%%%%%%
%%%%%%%%%%%%%%%%%%%%%%%%%
\begin{proof}
By Proposition \ref{basicprop}, we find that $\G(X_N)$ is connected and has no wire edges.

Assume by contradiction that there exists a face $F_0$ that is neither a triangle nor a crystallized square.
Since, for any given $X\in\A^{\Z^2}$, 
$\Per_\comb(F)\neq 5$ for every face $F\in\Fsf(X)$, we have either $\Per(F_0)\ge 6$ or $\Per(F_0)=4$.
    %la dimostrazione mi torna ma mi sembra che in $\Z^2$ non si possano costruire facce con perimetero $=5$; btw, qui intendiamo perimetro $5$ non numero di lati $=5$, giusto?

Assume first that $\Per(F_0)=k$ with $k\ge 6$. 
Let 
$\{x^j(F_0)\}_{j=1,\ldots,J}$ denote the 
set of points in $\Z^2$  that do not belong $X_N$ but are contained in the (interior of the) face  $F_0$. Since $\Per(F_0)\ge 6$, we have that $J\ge 2$.
We consider the configuration $X':=X_N\cup\{x^j(F_0)\}_{j=1,\ldots,J}$.
We claim that 
\begin{equation}\label{contraddi}
\F_{N+J}(X')<\F_N(X_N),
\end{equation}
which contradicts the monotonicity proven in Lemma \ref{lemma:monotonicity}.
%We add all the interior points to $F$, obtaining a new configuration $X'$ with more points, and we aim to prove that $\F(X')<\F(X_N)$, thus contradicting the monotonicity of Lemma \ref{lemma:monotonicity}. 
From the energy decomposition of Theorem \ref{thm:decomposition_square}, by choosing $S=\Fsf_\bdd(X_N)$ as the set of all bounded faces, we obtain
    \[
    \F_N(X_N)=3 \Per(A(X_N))+8%\#\Ccal(X)
    +\sum_{F\in \Fsf(X_N)}\delta(F)\,,%+6\# \Ed^{ext}(X_N).
    \]
    %where we have used also Lemma \ref{basicprop}.
    A similar decomposition holds for $X'$. Observe that all faces of $X'$ coincide with those of $X_N$, except possibly for those that intersect $F_0$. Moreover, the perimeter term is the same for $X_N$ and $X'$. We deduce that
    \[
    \F_N(X_N)-\F_{N+J}(X')\ge \delta(F_0)-\sum_{\substack{F\in \Fsf(X')\\F\subset F_0}}\delta(F).
    \]
    On the one hand, we have $\delta(F_0)=3k-8$. On the other hand, all faces of $X'$ contained in $F_0$ are either crystallized squares or triangles, because we added all interior points. But there can be at most one triangle for every edge in the boundary of $F_0$. Together with the fact that, for a triangle $T$, $\delta(T)=1$, and using that $k\ge 6$,   it follows that
    \[
    \delta(F_0)-\sum_{\substack{F\in \Fsf(X')\\F\subset F_0}}\delta(F)\ge 3k-8-k=2k-8\ge 4,
    \]
    i.e., \eqref{contraddi}. This excludes the existence of a face in $X_N$ with $\Per(F_0)\ge 6$.  
    
    %Therefore $\F(X_N)>\F(X')$, and this contradicts the monotonicity of Lemma \ref{lemma:monotonicity}. This shows that there is no face with $\Per(F_0)>4$. 

    Assume now that $\Per(F_0)=4$. The only possible case (since $F_0$ is not a crystallized square) is that $F_0$ is a diamond, i.e., (up to translations) the square $D$ with vertices in $(1,0),(0,1),(-1,0),(0,-1)$. We first observe that neither of the four points $(\pm 1,\pm 1)$ can belong to $X_N$, since, otherwise, by adding the origin we would add five edges, and thus contradict Lemma \ref{lemma:monotonicity}. Then we observe that $X_N$ cannot consist of diamonds only: if this were the case we could find a vertex of degree $2$ (just take a convex angle of the outer boundary polygon); suppose without loss of generality that this is the vertex $(1,0)$ of $D$. Then by removing this vertex and adding it back in the origin we would gain at least one edge, thus contradicting the minimality of $X_N$. By connectedness, we can thus assume that there is some triangle or some crystallized square that shares a vertex with a diamond face, which we can assume to be $F_0=D$, and we assume that the shared vertex is $(1,0)$. Then the point $(2,0)$ belongs to $X_N$, and either $(2,1)$ or $(2,-1)$ must belong to $X_N$ as well. Assume by symmetry that the first case happens. Then we can add the points $(0,0)$ and $(1,1)$ to the configuration, and the total number of edges increases by at least 9. This contradicts again Lemma \ref{lemma:monotonicity}.

    In conclusion, the only faces that can appear in $X_N$ are triangles and crystallized squares.
\end{proof}
%%%%%%%%%%%%%%%%%%%%%%%%%%%%%%%%%%%%%%%%%%%%
%%%%%%%%%%%%%%%%%%%%%%%%%%%%%%%%%%%%%%%%%%%%
%%%%%%%%%%%%%%%%%%%%%%%%%%%%%%%%%%%%%%%%%%%%
Lemma \ref{angoliconcavi} below shows that a minimizing configuration cannot have concave angles equal to $\frac\pi 2$.
%%%%%%%%%%%%%%%%%%%%%%%%%%%%%%%%%%%%%%%%%%%%
%%%%%%%%%%%%%%%%%%%%%%%%%%%%%%%%%%%%%%%%%%%%
%%%%%%%%%%%%%%%%%%%%%%%%%%%%%%%%%%%%%%%%%%%%
\begin{figure}[htbp]
    \centering
{\def\svgwidth{115pt}
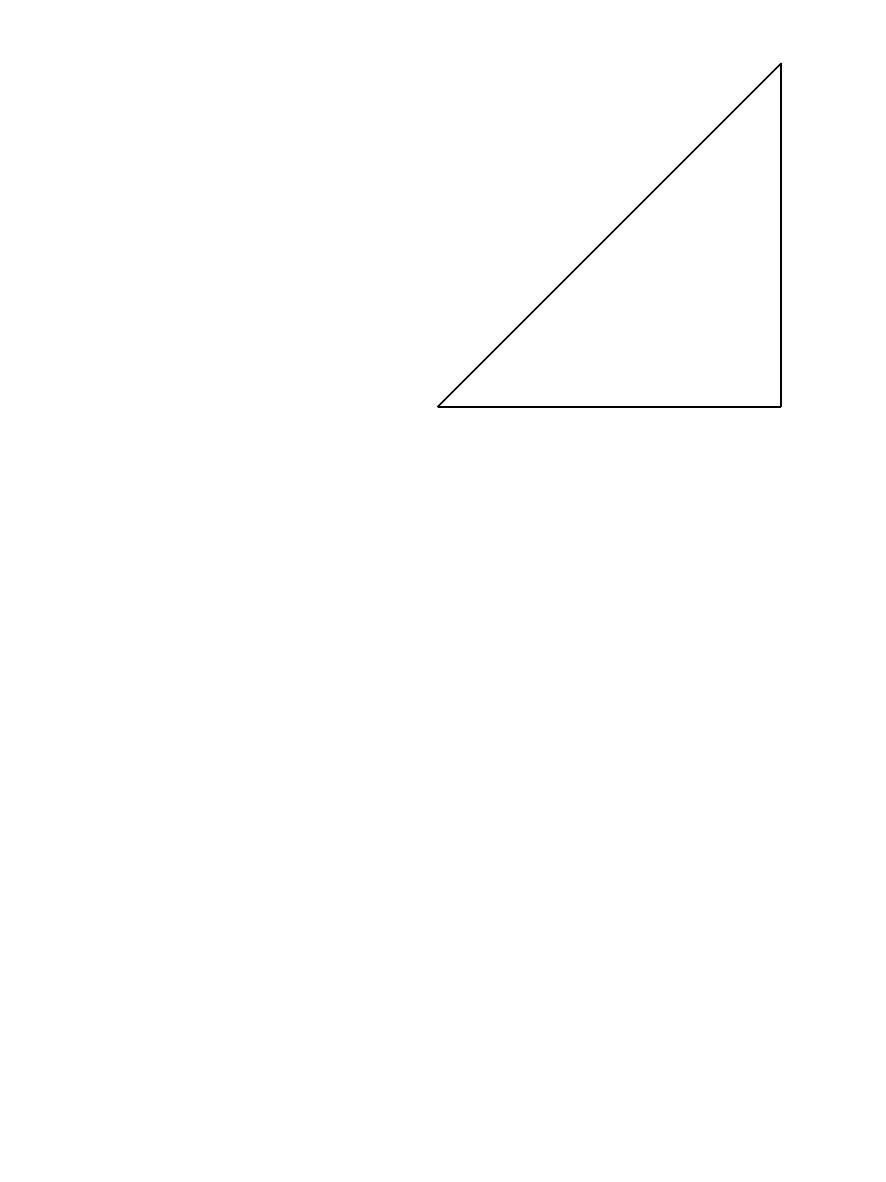}
    \caption{Proof of Lemma \ref{angoliconcavi}.}
    \label{ang_sbagl}
\end{figure}
%%%%%%%%%%%%%%%%%%%%%%%%%%%%%%%%%%%%%%%%%%%%
%%%%%%%%%%%%%%%%%%%%%%%%%%%%%%%%%%%%%%%%%%%%
%%%%%%%%%%%%%%%%%%%%%%%%%%%%%%%%%%%%%%%%%%%%
%%%%%%%%%%%%%%%%%%%%%%%%%%%%%%%%%%%%%%%%%%%%
%%%%%%%%%%%%%%%%%%%%%%%%%%%%%%%%%%%%%%%%%%%%
%%%%%%%%%%%%%%%%%%%%%%%%%%%%%%%%%%%%%%%%%%%%
\begin{lemma}\label{angoliconcavi}
Let $N\in\N$ with $N\ge 6$ and let $X_N$ be a minimizer of $\F_N$ in $\A_N^{\Z^2}$. Let $x\in\partial X_N$. 
The following facts hold true.
\begin{itemize}
\item
Assume that  $x+ (0,1)\notin X_N$ (resp., $x-(0,1)\notin X_N$). Then,  $x+(1,1)\in  X_N$ (resp. $x-(1,1)\in X_N$) if and only if $x+(-1,1)\notin X_N$ (resp. $x-(-1,1)\in X_N$). 
\item
Assume that   $x+ (1,0)\notin X_N$ (resp., $x-(1,0)\notin X_N$). Then,  $x+(1,1)\in X_N$ (resp. $x-(1,1)\in X_N$) if and only if $x+(1,-1)\notin X_N$ (resp. $x-(1,-1)\in X_N$). 
\end{itemize}
\end{lemma}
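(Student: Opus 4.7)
The plan is to exploit the $D_4$ symmetry of $\Z^2$ and of $\|\cdot\|_\infty$ to reduce the four cases in the lemma to a single one: namely, I will show that a ``concave angle of $\pi/2$'' at $x$ --- i.e.\ the configuration with $x+(1,1),x+(-1,1)\in X_N$ and $x+(0,1)\notin X_N$ --- is incompatible with the minimality of $X_N$. The remaining cases (including the resp.\ variants) follow by reflecting or rotating this argument.

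Arguing by contradiction, suppose $x+(1,1),\,x+(-1,1)\in X_N$ and $x+(0,1)\notin X_N$, so that the edges $\{x,x+(\pm1,1)\}$ lie in $\Ed(X_N)$. By Proposition \ref{basicprop} (no wire edges for $N\ge 3$) each of them borders two distinct faces. The face $F^\uparrow$ containing $x+(0,\eps)$ for small $\eps>0$ cannot be bounded: by Proposition \ref{pieno} such a bounded face would have to be a triangle or a crystallized square containing $x,x+(1,1),x+(-1,1)$ among its vertices, but $\|(x+(1,1))-(x+(-1,1))\|_\infty=2$ rules out both options (no edge can connect these two points, and no unit square can accommodate them). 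Hence $F^\uparrow$ equals the unbounded face, so the face $F_R$ on the opposite (lower-right) side of $\{x,x+(1,1)\}$ is bounded. Applying Proposition \ref{pieno} to $F_R$, the crystallized-square alternative is excluded because it would force the four vertices to be $x,x+(1,0),x+(0,1),x+(1,1)$ and so require $x+(0,1)\in X_N$. Therefore $F_R$ is a triangle whose third vertex is a common $\|\cdot\|_\infty$-neighbour of $x$ and $x+(1,1)$ on the lower-right side of $[x,x+(1,1)]$, and the only such $\Z^2$-point is $x+(1,0)$; in particular $x+(1,0)\in X_N$. A mirror argument applied to $\{x,x+(-1,1)\}$ gives $x+(-1,0)\in X_N$. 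Setting $y:=x+(0,1)$, the vertex $y$ then has at least the five $\|\cdot\|_\infty$-neighbours $x,x+(\pm1,0),x+(\pm1,1)$ in $X_N$, whence
\[
\#\Ed(X_N\cup\{y\})\ge \#\Ed(X_N)+5,
\]
contradicting the ``$+4$'' monotonicity bound from Lemma \ref{lemma:monotonicity} and thus the minimality of $X_N$.

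The main technical obstacle I anticipate is the face-identification step: one must ensure that $\{x,x+(1,1)\}$ really separates two distinct faces (invoking Proposition \ref{basicprop}), verify that $F^\uparrow$ is the unbounded one via the distance obstruction and Proposition \ref{pieno}, and finally exploit the excluded vertex $x+(0,1)$ to preclude the crystallized-square shape for $F_R$. Once this local rigidity is in place, the monotonicity inequality delivers the contradiction in one line.
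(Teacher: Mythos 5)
Your proof is correct and follows essentially the same route as the paper: both reduce by symmetry to excluding the configuration $x+(1,1),\,x+(-1,1)\in X_N$ with $x+(0,1)\notin X_N$, both then use Propositions \ref{basicprop} and \ref{pieno} to force $x\pm(1,0)\in X_N$ (your face-by-face justification of this step is a welcome expansion of the paper's one-line citation), and both conclude by observing that $\tilde x:=x+(0,1)$ would acquire five neighbours. The only divergence is the endgame: the paper swaps $\tilde x$ for a boundary vertex $y$ of degree at most $4$ to contradict minimality at fixed cardinality $N$, whereas you add $\tilde x$ and invoke Lemma \ref{lemma:monotonicity}; the two are equivalent (that lemma is itself proved by the same removal trick), and your variant has the minor advantage of not having to exhibit a removable $y$ outside $\{x,x\pm(1,0),x+(1,1),x+(-1,1)\}$.
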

%%%%%%%%%%%%%%%%%%%%%%%%%%%%%%%%%%%%%%%%%%%%
%%%%%%%%%%%%%%%%%%%%%%%%%%%%%%%%%%%%%%%%%%%%
%%%%%%%%%%%%%%%%%%%%%%%%%%%%%%%%%%%%%%%%%%%%
\begin{proof}
By symmetry, it is enough to prove only one implication in one of the case listed above.

Assume by contradiction that $x':=x+(1,1)$ and $x'':= x+(-1,1)$ are both in $X_N$. We set $\tilde x:= x+(0,1)$. 
By Proposition \ref{basicprop} and by Proposition \ref{pieno} we have that $x\pm (1,0)\in X_N$.
Let $y\in\partial X_N\setminus\{x\pm (1,0), x', x''\}$
with $\deg(y)\le 4$; by Proposition \ref{pieno}, we have that the configuration $\widetilde X_N:=\big(X_N\setminus\{y\}\big)\cup\{\tilde x\}$ belongs to $\A_N^{\Z^2}$ and is such that $\deg(\tilde x)\ge 5$.
It follows that $\F_N(\widetilde X_N)\le \F_N(X_N)-2$, thus contradicting the minimality of $X_N$.
This concludes the proof of the result.
\end{proof}
%%%%%%%%%%%%%%%%%%%%%%%%%%%%%%%%%%%%%%%%%%%%
%%%%%%%%%%%%%%%%%%%%%%%%%%%%%%%%%%%%%%%%%%%%
%%%%%%%%%%%%%%%%%%%%%%%%%%%%%%%%%%%%%%%%%%%%
Lemma \ref{angoliconcavi} ensures the absence of ``bow points'' in minimizing configurations, which allows to prove the ``regularity'' of the boundary of minimal configurations.
%%%%%%%%%%%%%%%%%%%%%%%%%%%%%%%%%%%%%%%%%%%%
%%%%%%%%%%%%%%%%%%%%%%%%%%%%%%%%%%%%%%%%%%%%
%%%%%%%%%%%%%%%%%%%%%%%%%%%%%%%%%%%%%%%%%%%%
\begin{proposition}[Minimizers in $\Z^2$ have simple and closed polygonal boundary]\label{simplyclosedbdryy}
Let $N\in\N$ and let $X_N$ be a minimizer of $\F_N$ in $\A_N^{\Z^2}$. Then $A(X_N)$ has simple boundary.
\end{proposition}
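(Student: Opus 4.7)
The plan is to argue by contradiction. Assume that $A(X_N)$ does not have simple boundary. First, I would reduce the problem to a purely local one. By Proposition \ref{pieno} every bounded face of $\G(X_N)$ is a unit isosceles right triangle or a crystallized unit square; in particular every bounded face has diameter at most $\sqrt{2}$. Consequently any bounded connected component of $\R^2\setminus A(X_N)$ would itself have to be such a face, contradicting the definition of $A(X_N)$. Hence $\R^2\setminus A(X_N)$ has only the unbounded component, and the failure of simpleness must come from a \emph{pinch vertex} $x\in X_N$: a vertex at which the unbounded face touches $x$ from at least two distinct angular sectors.

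Next, I would classify the pinch vertices. Writing the bonds at $x$ counterclockwise, the angles between consecutive bonds are multiples of $\pi/4$ (Lemma \ref{lemma:minimal_angle}(2)); every $\pi/4$ sector is a triangle face, and every $\pi/2$ sector between two axis-neighbors is a triangle face (the two axis-neighbors sit at $\|\cdot\|_\infty$-distance $1$ from each other and are hence automatically bonded). An \emph{unbounded} $\pi/2$ sector must therefore be delimited by two diagonal neighbors with the intervening axis-neighbor missing---a ``bow'' configuration, excluded by Lemma \ref{angoliconcavi}. Hence every unbounded sector has angle at least $3\pi/4$. Combining this with the sum $2\pi$ of the angles at $x$ and the absence of wire edges (Proposition \ref{basicprop}), a short case analysis leaves, up to the symmetries of $(\Z^2,\|\cdot\|_\infty)$, the unique possibility $\deg(x)=4$ with bonds to $(1,0),(1,1),(-1,0),(-1,-1)$, exhibiting two alternating bounded triangular $\pi/4$-sectors and two alternating unbounded $3\pi/4$-sectors; in particular $(0,1),(-1,1),(0,-1),(1,-1)\notin X_N$.

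Finally, to derive a contradiction I would iterate Lemma \ref{angoliconcavi} at the boundary vertices near the pinch. At $y=(1,0)$, from $y+(0,-1)=(1,-1)\notin X_N$ and $y+(-1,-1)=(0,-1)\notin X_N$ the lemma forces $(2,-1)\in X_N$; a direct inspection of the angular sectors at $(1,0)$ then shows that $(2,0)$ must also belong to $X_N$, since otherwise the bond $\{(1,0),(2,-1)\}$ would be a wire and contradict Proposition \ref{basicprop}. Iterating the same reasoning at $(1,1),(2,-1),(2,0),(2,2),\ldots$, I expect to force an infinite sequence of pairwise distinct lattice points into $X_N$ along a diagonal direction, contradicting $|X_N|<\infty$.

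The main obstacle will be the bookkeeping in this iteration step: at each application of the lemma one has to branch on the in/out status of several nearby lattice points and verify that every branch either extends the forced infinite chain or produces an immediate local obstruction (a new wire, a bow, or a new pinch to which the argument applies recursively). Ensuring that no branch terminates in a consistent finite configuration is the delicate part.
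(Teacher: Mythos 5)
Your reduction to a single pinch vertex and the classification of its local structure (degree $4$, bonds to $(1,0),(1,1),(-1,0),(-1,-1)$, with $(0,1),(0,-1),(-1,1),(1,-1)$ absent) are correct and coincide with the first half of the paper's argument, which likewise uses Lemma \ref{angoliconcavi} and Lemma \ref{lemma:minimal_angle} to pin down this unique picture. The gap is in your final step. Lemma \ref{angoliconcavi}, as actually proved, is only an \emph{exclusion}: if the intervening axis neighbour $x+(0,1)$ is missing, then $x+(1,1)$ and $x+(-1,1)$ cannot \emph{both} lie in $X_N$ (its proof begins by assuming both are present and deriving a contradiction). It does not assert that one of the two diagonal neighbours must be present; at a convex corner of a large square crystal all three of $x+(0,1)$, $x+(1,1)$, $x+(-1,1)$ are absent. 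Hence your deduction ``from $(1,-1)\notin X_N$ and $(0,-1)\notin X_N$ the lemma forces $(2,-1)\in X_N$'' does not follow, and the propagation meant to build an infinite diagonal chain never gets started. Lemma \ref{gradomag3} does give $\deg((1,0))\ge 3$, hence \emph{some} further neighbour of $(1,0)$ among $(2,-1),(2,0),(2,1)$, but not a specific one; and even with a corrected forcing step you would still have to rule out branches in which the configuration closes up after finitely many steps, which you acknowledge you have not done.

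The paper sidesteps this entirely with a finite surgery rather than an unbounded propagation: with the pinch vertex $\bar x$ normalized as above, it splits $A(X_N)\setminus\{\bar x\}$ into its two components, translates one of them by $(0,-1)$ while replacing $\bar x$ by $\bar x+(0,-1)$, checks that the energy does not increase, and observes that the new configuration either strictly beats $X_N$ or is a minimizer exhibiting a forbidden ``bow'', contradicting Lemma \ref{angoliconcavi}. To complete your proof you would need either a genuine forcing-plus-termination argument (a substantial amount of new case analysis) or, more simply, a one-shot modification of the configuration in the spirit of the paper.
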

%%%%%%%%%%%%%%%%%%%%%%%%%%%%%%%%%%%%%%%%%%%%
%%%%%%%%%%%%%%%%%%%%%%%%%%%%%%%%%%%%%%%%%%%%
%%%%%%%%%%%%%%%%%%%%%%%%%%%%%%%%%%%%%%%%%%%%
\begin{proof}
The minimizers of $\F_N$ in $\A_N^{\Z^2}$ with $N\le 5$ are explicit (see Figure \ref{minimiexp}).
\begin{figure}[htbp]
    \centering
{\def\svgwidth{210pt}
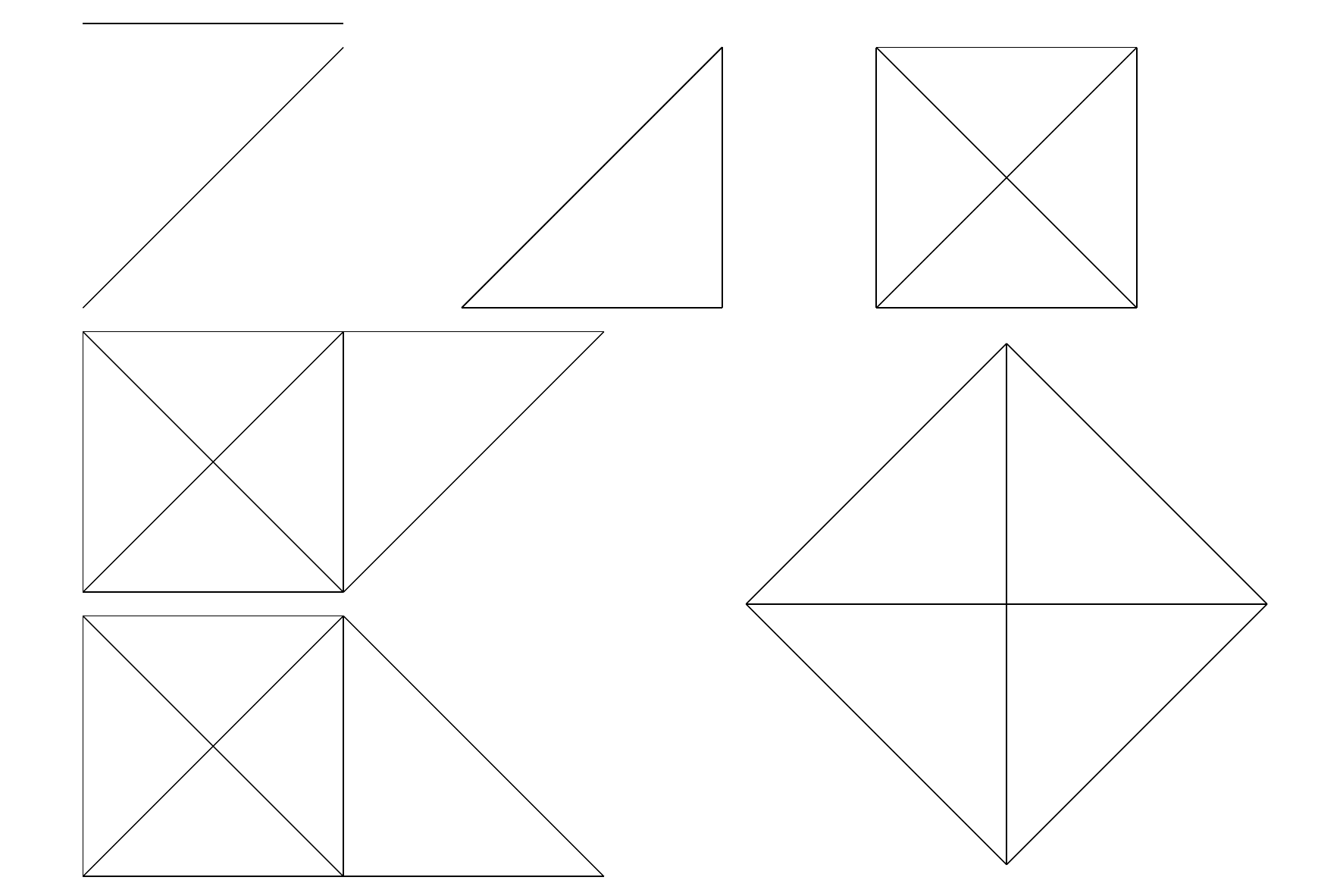}
    \caption{All the minimizers (up to a $\frac{\pi}{2}$ rotation) of the energy $\E$ in $\mathcal A_N^{\Z^2}$ for $N=2,3,4,5$.}
    \label{minimiexp}
\end{figure}
Let $N\ge 6$ and 
assume by contradiction that there exists $\bar x\in X_N$ such that $A(X_N)\setminus\bar x$ is disconnected.
Let $\Gamma',\Gamma''$ be the two connected components of $A(X_N)\setminus\{\bar x\}$ and set $X_N':=\Gamma'\cap X_N$ and $X_N'':=\Gamma''\cap X_N$. 

Since $X_N\in \A^{\Z^2}$, in view of Lemma \ref{angoliconcavi} we can assume, without loss of generality, that the points $\bar x\pm(1,0), \bar x\pm(1,1)$  are all in $\partial X_N$ whereas the points $\bar x\pm(-1,1),\bar x \pm(0, 1)\notin X_N$. We set $\overline X_N:=X_N'\cup\{\bar x+(0,-1)\}\cup (X_N''+(0,-1))$. By construction, $\overline X_N\in \A_N^{\Z^2}$ and $\F(\overline X_N)\le \F(X_N)$. If the previous inequality is strict we have obtained a contradiction with the minimality of $X_N$; if $\F_N(\overline X_N)= \F_N(X_N)$, we have that $\overline X_N$ is a minimizer of $\E$ in $\A_N^{\Z^2}$ that satisfies $\bar x+(-1,0),\bar x+(0,-1),\bar x+(1,0)\in \partial X_N$, thus contradicting Lemma \ref{angoliconcavi}.  
%If $\alpha(\bar x)=\frac \pi 2$, then we may assume without loss of generality that $x'=\bar x+(-1,1)$ and $x''=\bar x+(1,1)$. We set $\tilde x:=\bar x+(0,1)$. Let $y\in\partial X$ with $\deg(y)\le 4$; by Lemma \ref{pieno}, we have that the configuration $\widetilde X_N:=X_N\setminus\{y\}\cup\{\tilde x\}$ belongs to $\A_N^{\Z^2}$ and is such that $\deg(\tilde x)=5$.
%It follows that $\E(\widetilde X_N)\le \E(X_N)-1$, thus contradicting the minimality of $X_N$. It follows that the case $\alpha(\bar x)=\frac\pi 2$ cannot occur.
%Assume now that $\alpha(\bar x)=\frac 3 4\pi$. We can assume without loss of generality that $x'=\bar x+(-1,0)$ and $x''=\bar x+(1,1)$. 
%By arguing as in the previous case, we have that the point $\bar x+(1,2)\notin X_N$, since otherwise replacing a point with degree smaller than or equal to $4$ with the point $\bar x+(1,0)$ would decrease the energy (thus contradicting the minimality of $X_N$); analogously, we have that 
\end{proof}
%%%%%%%%%%%%%%%%%%%%%%%%%%%%%%%%%%%%%%%%%%%%
%%%%%%%%%%%%%%%%%%%%%%%%%%%%%%%%%%%%%%%%%%%%
%%%%%%%%%%%%%%%%%%%%%%%%%%%%%%%%%%%%%%%%%%%%
Lemma \ref{gradomag3} and Lemma \ref{ancorapiurig} allow to prove that the triangular faces appearing in energy minimizers in $\Z^2$ are rigid; more precisely, either both the catheti lie on two squared crystallized faces, or if a cathetum is shared with a triangular face, then the union between the two adjacent triangles forms a triangle.
%%%%%%%%%%%%%%%%%%%%%%%%%%%%%%%%%%%%%%%%%%%%
%%%%%%%%%%%%%%%%%%%%%%%%%%%%%%%%%%%%%%%%%%%%
%%%%%%%%%%%%%%%%%%%%%%%%%%%%%%%%%%%%%%%%%%%%
\begin{lemma}\label{gradomag3}
Let $N\in\N$ with $N\ge 6$ and let $X_N$ be a minimizer of $\F_N$ in $\A_{N}^{\Z^2}$. Then, $\deg(x)\ge 3$ for every $x\in X_N$.
\end{lemma}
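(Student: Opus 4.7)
The plan is to argue by contradiction: suppose $\deg(x_0) \le 2$ for some $x_0 \in X_N$. By Proposition \ref{basicprop}, $\G(X_N)$ has no wire edges, and a single incident edge at a degree-one vertex would itself be a wire; hence $\deg(x_0) = 2$. Denote by $y_1, y_2$ the two neighbors of $x_0$.

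First I would pin down the local geometry at $x_0$. By Proposition \ref{pieno} every bounded face is a triangle or a crystallized square, and since each vertex of a crystallized square is incident to three edges within that square, $x_0$ lies on no crystallized square. The two edges at $x_0$ split a small neighborhood of $x_0$ into two sectors, each contained in a single face. If a sector is a triangular bounded face it must be $T := \{x_0, y_1, y_2\}$, and the other sector cannot accommodate a second distinct triangle at $x_0$ without forcing $\deg(x_0) \ge 3$. Consequently exactly one sector is $T$ and the other is part of the unbounded face; in particular $\|y_1 - y_2\|_\infty = 1$ and $T$ is a pendant triangle with tip $x_0$. By Lemma \ref{lemma:minimal_angle}(1)(2), up to lattice isometries we may assume $x_0 = 0$ and that one of the following holds:
\[
\text{(i)}\ y_1 = (1,0),\ y_2 = (0,1),\qquad
\text{(ii)}\ y_1 = (1,0),\ y_2 = (1,1).
\]
In either case the six remaining lattice neighbors of $x_0$ are absent from $X_N$; in particular $(1,1) \notin X_N$ in case (i) and $(0,1) \notin X_N$ in case (ii).

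The decisive move is a \emph{swap}: set $\tilde x := (1,1)$ in case (i) and $\tilde x := (0,1)$ in case (ii), and define $\widetilde X_N := (X_N \setminus \{x_0\}) \cup \{\tilde x\}$; then $\widetilde X_N \in \A_N^{\Z^2}$, and $\tilde x$ has both $y_1$ and $y_2$ as lattice neighbors in $\widetilde X_N$, so $\deg_{\widetilde X_N}(\tilde x) \ge 2$. If the inequality is strict, then $|\Ed(\widetilde X_N)| \ge |\Ed(X_N)| + 1$, whence $\F_N(\widetilde X_N) \le \F_N(X_N) - 2$, contradicting the minimality of $X_N$.

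It remains to rule out the equality case $\deg_{\widetilde X_N}(\tilde x) = 2$. In this case the five other lattice neighbors of $\tilde x$ are also absent from $X_N$; combined with the preceding constraints this reduces the possible further neighbors of $y_1, y_2$ in $X_N$ to a single diagonal direction each (for instance $(2,-1)$ for $y_1$ and $(-1,2)$ for $y_2$ in case (i)). The key principle, which I would use repeatedly, is that by reapplying the local analysis above at any degree-2 vertex, its two neighbors must themselves be lattice-adjacent and form a pendant triangle -- otherwise the two incident edges would be wire edges, contradicting Proposition \ref{basicprop}. This forbids ``chain-like'' extensions of $X_N$, so the admissible extensions of the configuration are strictly limited. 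Using $N \ge 6$ together with the connectedness of $\G(X_N)$, a finite enumeration of the residual sub-cases shows that some secondary swap (for example moving $x_0$ to $(1,-1)$ or $(2,0)$ in case (i), and analogously in case (ii)) always produces a vertex of degree $\ge 3$ and again contradicts minimality. The main obstacle is precisely this bookkeeping: verifying that every one of the residual local configurations accommodates a productive swap without leaving any sub-case uncovered.
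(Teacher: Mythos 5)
Your reduction to the case $\deg(x_0)=2$ and your identification of the local picture (a pendant triangle $T=\{x_0,y_1,y_2\}$ with $\|y_1-y_2\|_\infty=1$) are fine, but the core of your argument has a genuine gap. Your swap $\widetilde X_N=(X_N\setminus\{x_0\})\cup\{\tilde x\}$ places $\tilde x$ at the fourth corner of the square spanned by $T$, where it is guaranteed to be adjacent only to $y_1$ and $y_2$ — precisely because the third vertex of $T$ it could have bonded with is the point $x_0$ you just removed. So in general you only get $\F_N(\widetilde X_N)\le \F_N(X_N)$, and the strict inequality you need fails exactly in the case $\deg_{\widetilde X_N}(\tilde x)=2$. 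Your proposed fix — ``a finite enumeration of the residual sub-cases'' — is not carried out, and it is not clear it can be closed by purely local reasoning: an energy-neutral swap simply produces another minimizer with another degree-2 vertex, so without a monovariant the iteration could in principle shuffle the defect around indefinitely (e.g.\ along a staircase of pendant triangles). As written, the proof is incomplete.

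The paper avoids this trap by decoupling the removal from the reinsertion. It deletes the degree-2 vertex, so $\F_{N-1}(X_N')=\F_N(X_N)-4$, and then reinserts a point at a \emph{globally} chosen location where it has degree at least $3$, which costs at most $8-2\cdot 3=2$ and yields the strict net decrease $-2$. The location is found by a simple dichotomy on $X_N'$: if $X_N'$ has some triangular face $T'$ (anywhere, not necessarily at $x_0$), the fourth corner of the associated unit square is at $\|\cdot\|_\infty$-distance $1$ from \emph{all three} vertices of $T'$, hence has degree $\ge 3$; if $X_N'$ has no triangular faces, then $\partial A(X_N')$ contains a straight side of length $\ge 2$, and a point added alongside its midpoint again has degree $\ge 3$. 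If you want to salvage your swap-based approach, you would need to replace the fourth-corner target by such a degree-$\ge 3$ target, at which point you have essentially reconstructed the paper's argument.
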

%%%%%%%%%%%%%%%%%%%%%%%%%%%%%%%%%%%%%%%%%%%%
%%%%%%%%%%%%%%%%%%%%%%%%%%%%%%%%%%%%%%%%%%%%
%%%%%%%%%%%%%%%%%%%%%%%%%%%%%%%%%%%%%%%%%%%%
\begin{proof}
Assume by contradiction that there exists $\bar x\in X_N$ with $\deg(x)= 2$ and let $X_N':=X_N\setminus\{\bar x\}$. Then, $\sharp X'_N\ge 5$ and $\F_{N-1}(X'_N)=\F_N(X_N)-4$.
If $\Fsf_\triangle(X'_N)=\emptyset$, then, since $\sharp X'_N\ge 5$, the polygon $A(X'_N)$ has a horizontal (resp., vertical) side with length $\ge 2$. Assume, without loss of generality, that the points $(-1,0),(0,0), (1,0)\in \partial X_N'$ and that the set $\{(t,s)\,:\,-1<t<1, -1<s<0\}$ is contained in $A(X'_N)$; then, the configuration $\bar X_N:=X'_N\cup\{(0,1)\}$ has energy $\F_N(\bar X_N)=\F_{N-1}(X_N')+2=\F_N(X_N)-2$, thus contradicting the minimality of $X_N$.

On the other hand, assume that there is a triangular face  $T=(x_1,x_2,x_3)\in \Fsf_\triangle(X'_N)$, and let $\hat x$ be such that the quadrilateral $(x_1,x_2,x_3,\hat x)$ is a unit square with vertices in $\Z^2$.
Then, taking $\widehat X_N:=X'_N\cup\{\hat x\}$ where $\hat x$ we have again that $\F_N(\widehat X_N)=\F_{N-1}(X_N')+2=\F_N(X_N)-2$, which still contradicts the minimality of $X_N$.

This concludes the proof.
\end{proof}
%%%%%%%%%%%%%%%%%%%%%%%%%%%%%%%%%%%%%%%%%%%%
%%%%%%%%%%%%%%%%%%%%%%%%%%%%%%%%%%%%%%%%%%%%
%%%%%%%%%%%%%%%%%%%%%%%%%%%%%%%%%%%%%%%%%%%%
\begin{lemma}\label{ancorapiurig}
Let $N\in\N$ and let $X_N$ be a minimizer of $\F_N$ in $\A_{N}^{\Z^2}$. 

Assume that there are two triangular faces $T',T''$ sharing the edge $\{x,x+(1,0)\}$ for some $x\in X_N$. Then, either $T'=\{x,x+(1,0),x+(1,1)\}$ and $T''=\{x,x+(1,0), x+(1,-1)\}$ (and viceversa) or $T'=\{x,x+(1,0),x+(0,1)\}$ and $T''=\{x,x+(1,0), x+(0,-1)\}$ (and viceversa).

Analogously, assume that there are two triangular faces $T',T''$ sharing the edge $\{x,x+(0,1)\}$ for some $x\in X_N$. Then, either $T'=\{x,x+(-1,0),x+(0,1)\}$ and $T''=\{x,x+(1,0), x+(0,1)\}$ (and viceversa) or $T'=\{x,x+(-1,1),x+(0,1)\}$ and $T''=\{x,x+(1,1), x+(0,1)\}$ (and viceversa).
\end{lemma}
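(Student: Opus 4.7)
We argue by contradiction, proving the case of a horizontal shared edge $\{x,\,x+(1,0)\}$; the vertical case then follows by a $\pi/2$-rotation of $\Z^2$. Assume a disallowed pair of triangular faces exists and, without loss of generality (up to reflection symmetry), take $T' = \{x, x+(1,0), x+(0,1)\}$ and $T'' = \{x, x+(1,0), x+(1,-1)\}$. The first observation is that $x+(1,1) \notin X_N$: otherwise the four points $\{x,\, x+(1,0),\, x+(1,1),\, x+(0,1)\}$ would be pairwise at sup-distance $1$ and hence span a crystallized square face via \eqref{crossass}, contradicting the assumption that $T'$ is a triangular face. Symmetrically, $x+(0,-1) \notin X_N$.

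Next, recall from Proposition \ref{pieno} that every bounded face of $\G(X_N)$ is either a triangle or a crystallized square. A direct enumeration of the lattice candidates for the face adjacent to the edge $\{x,\, x+(1,-1)\}$ on the side opposite $T''$ shows that the only possible third vertex of such a triangle, and the only possible missing vertex of such a crystallized square, would in both cases have to be $x+(0,-1)$, which however has just been excluded. Hence $\{x,\, x+(1,-1)\}$ must border the unbounded face; the analogous argument using $x+(1,1)\notin X_N$ shows that $\{x+(1,0),\, x+(0,1)\}$ is also a boundary edge. In particular, the four vertices $x$, $x+(1,0)$, $x+(0,1)$ and $x+(1,-1)$ all lie in $\partial X_N$.

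We now apply Lemma \ref{angoliconcavi} at each of these four boundary vertices; combined with Lemma \ref{gradomag3}, this produces a family of exclusions (such as $x+(0,-2),\, x+(-1,-1),\, x+(1,2),\, x+(2,1) \notin X_N$) and several ``either--or'' alternatives on the remaining adjacent lattice sites. The argument then splits according to whether $\{x+(1,0),\, x+(1,-1)\}$ is a boundary edge. If it is, then $\deg(x+(1,0)) = 3$ combined with the forced presence of at least one of $x+(2,-1)$ or $x+(2,0)$ from Lemma \ref{angoliconcavi} yields an immediate contradiction, since either such point would have to be a neighbor of $x+(1,0)$ and raise its degree. If instead this edge is interior, then Proposition \ref{pieno} forces the face on its right to be either a triangle whose third vertex together with $T''$ creates a fresh disallowed pair sharing a \emph{vertical} edge (a contradiction via the vertical case of the lemma, which is equivalent to the horizontal one by rotation), or the crystallized square $\{x+(1,0),\, x+(2,0),\, x+(2,-1),\, x+(1,-1)\}$, which propagates the same structural question one lattice unit further to the right. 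Because $\partial A(X_N)$ is a simple closed polygon by Proposition \ref{simplyclosedbdryy} and $X_N$ is finite, this rightward propagation must eventually terminate in one of the earlier contradictions.

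The principal technical obstacle is this final iteration, where the cascade of forced and excluded lattice points along $\partial X_N$ must be tracked carefully so as to confirm that the propagation genuinely terminates rather than loops; the symmetric reflection of the same analysis (sending $(1,-1)\mapsto(0,-1)$ and $(0,1)\mapsto(1,1)$) handles the second ``diagonally opposite'' disallowed configuration.
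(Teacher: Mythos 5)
Your proposal takes a local case-analysis route that is genuinely different from the paper's argument, but as written it has real gaps and does not close. The most serious one is the one you flag yourself: the ``rightward propagation'' is never shown to terminate in a contradiction, and appealing to finiteness of $X_N$ and to Proposition \ref{simplyclosedbdryy} does not by itself do this --- the propagation could simply reach the right end of $\partial A(X_N)$ in a configuration that triggers none of your ``earlier contradictions.'' Two intermediate steps are also shaky. First, in the subcase where $\{x+(1,0),x+(1,-1)\}$ is interior and the face on its far side is a triangle, you claim the new pair of triangles sharing this vertical edge is ``fresh disallowed''; but the third vertex could be $x+(2,0)$, which together with $T''=\{x,x+(1,0),x+(1,-1)\}$ forms one of the \emph{allowed} vertical configurations (third vertices $x=y+(-1,1)$ and $x+(2,0)=y+(1,1)$ for $y=x+(1,-1)$), so no contradiction arises and the propagation just continues. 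Moreover, invoking ``the vertical case of the lemma'' inside the proof of the horizontal case is circular unless you set up a minimal-counterexample or simultaneous induction, which you do not. Second, the assertion $\deg(x+(1,0))=3$ in the boundary-edge subcase is stated without justification; nothing you have established excludes further interior edges at $x+(1,0)$ (e.g.\ toward $x+(2,0)$ or $x+(2,-1)$).

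For comparison, the paper avoids all of this with a single global move: it deletes the few edges incident to the bad pair of triangles, observes that this disconnects the graph into two components $\G_N'$ and $\G_N''$ (with, in its normalization, $x+(1,0),x+(1,1)$ on one side and $x,x+(0,-1)$ on the other, and $x+(1,-1)\notin X_N$ by the disallowed-pair assumption), and then translates one component by $(0,-1)$. The translated configuration is still admissible and in $\Z^2$, and it recreates the deleted bonds while gaining at least one new bond, so $\F_N$ drops by at least $2$, contradicting minimality. This is the same ``sliding'' mechanism already used in Proposition \ref{basicprop} and Lemma \ref{angoliconcavi}, and it replaces your entire cascade of exclusions and the unproved termination step with one explicit competitor. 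If you want to keep your local approach, you would at minimum need to (i) run the argument on a counterexample chosen extremal in a suitable direction so that the propagation cannot continue, and (ii) correct the classification of which adjacent triangles actually yield disallowed vertical pairs.
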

%%%%%%%%%%%%%%%%%%%%%%%%%%%%%%%%%%%%%%%%%%%%
%%%%%%%%%%%%%%%%%%%%%%%%%%%%%%%%%%%%%%%%%%%%
%%%%%%%%%%%%%%%%%%%%%%%%%%%%%%%%%%%%%%%%%%%%
\begin{proof}
Assume by contradiction that $T'=\{x,x+(1,0),x+(1,1)\}$ and $T''=\{x,x+(1,0), x+(0,-1)\}$ and let $\G_N'$ and $\G_N''$ be the connected components of the graph $\bar G_N:=(X_N,\overline \Ed(X_N))$, where $\overline\Ed(X_N):=\Ed(X_N)\setminus\{\{x,x+(1,1)\},\{x,x+(1,0)\}, \{x+(-1,0),x+(1,0)\}\}$. 
Let $X_N'$ and $X_N''$ be the set of vertices of $\G_N'$ and $\G_N''$ respectively. %\textcolor{blue}{(questo \`e vero perch\'e tutte le facce sono cristallizzate)} 
We can assume without loss of generality that $x+(1,0), x+(1,1)\in X_N'$ and $x+(0,-1), x\in X_N''$.
Notice that, by assumption, $x+(1,-1)\notin X_N$.

We define $\widetilde X_N:=X_N''\cup (X_N'+(0,-1))$. Then, $\widetilde X_N\in\A_{N}^{\Z^2}$ and $\F_N(\widetilde X_N)\le \F_N(X_N)-2$ (see Figure \ref{dimolemmapiurig}), thus contradicting the minimality of $X_N$.
\end{proof}
%%%%%%%%%%%%%%%%%%%%%%%%%%%%%%%%%%%%%%%%%%%%
%%%%%%%%%%%%%%%%%%%%%%%%%%%%%%%%%%%%%%%%%%%%
%%%%%%%%%%%%%%%%%%%%%%%%%%%%%%%%%%%%%%%%%%%%
\begin{figure}[htbp]
    \centering
{\def\svgwidth{210pt}
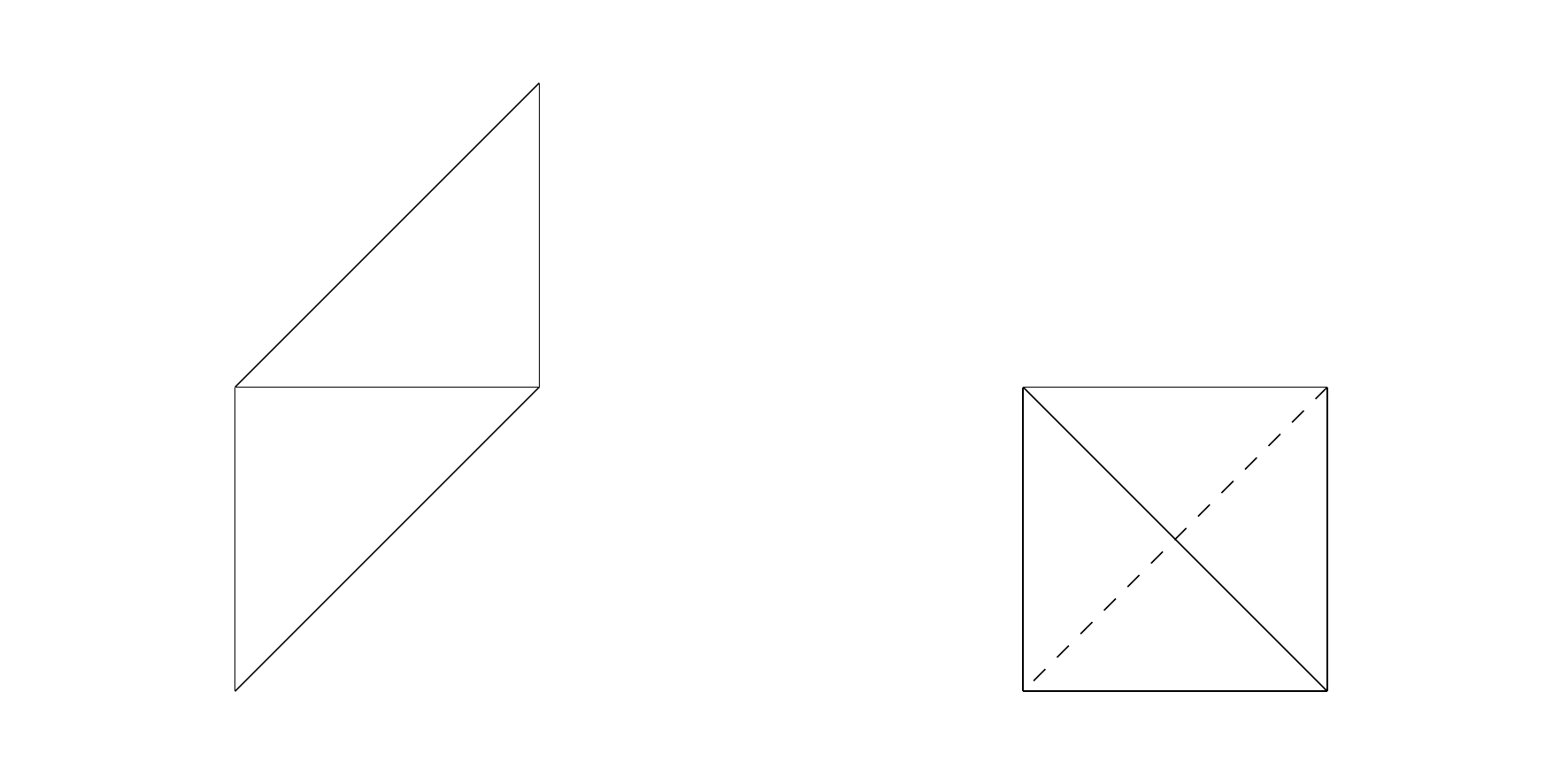}
    \caption{Proof of Lemma \ref{ancorapiurig}.}
    \label{dimolemmapiurig}
\end{figure}
%%%%%%%%%%%%%%%%%%%%%%%%%%%%%%%%%%%%%%%%%%%%
%%%%%%%%%%%%%%%%%%%%%%%%%%%%%%%%%%%%%%%%%%%%
%%%%%%%%%%%%%%%%%%%%%%%%%%%%%%%%%%%%%%%%%%%%
\begin{remark}\label{rigidita}
By Lemma \ref{gradomag3} and Lemma \ref{ancorapiurig} we have that, given  a minimizer of $\F_N$ in $\A_N^{\Z^2}$ (with $N\ge 6$), its triangular faces may satisfy only one of the following conditions:  
either both its horizontal and vertical  edges belong to two distinct (crystallized) square faces or the triangular face shares one edge with another triangular face in such a way that the union of the two faces is still a triangle; i.e., up to $\tfrac \pi 2$ rotations, only the situations depicted in Figure \ref{faccetriz2} can occur.
\end{remark}

%%%%%%%%%%%%%%%%%%%%%%%%%%%%%%%%%%%%%%%%%%%%
%%%%%%%%%%%%%%%%%%%%%%%%%%%%%%%%%%%%%%%%%%%%
%%%%%%%%%%%%%%%%%%%%%%%%%%%%%%%%%%%%%%%%%%%%
\begin{figure}[htbp]
    \centering
{\def\svgwidth{210pt}
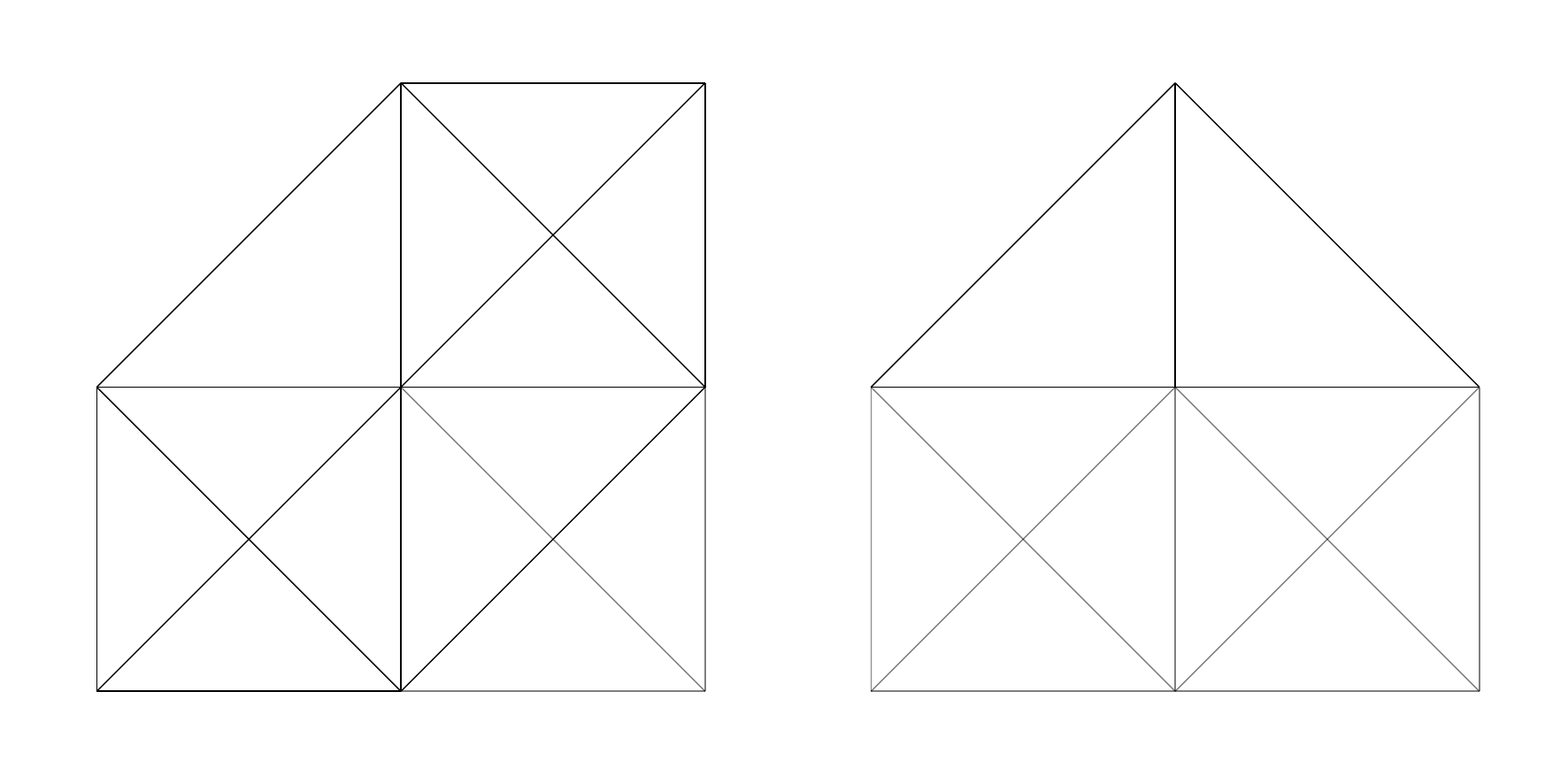}
    \caption{The triangular crystallized faces of minimizers of the energy $\F_N$ in $\A_N^{\Z^2}$.}
    \label{faccetriz2}
\end{figure}
%%%%%%%%%%%%%%%%%%%%%%%%%%%%%%%%%%%%%%%%%%%%
%%%%%%%%%%%%%%%%%%%%%%%%%%%%%%%%%%%%%%%%%%%%
%%%%%%%%%%%%%%%%%%%%%%%%%%%%%%%%%%%%%%%%%%%%
Now we have all the ingredients to prove Theorem \ref{mainsquare}.
%%%%%%%%%%%%%%%%%%%%%%%%%%%%%%%%%%%%%%%%%%%%
%%%%%%%%%%%%%%%%%%%%%%%%%%%%%%%%%%%%%%%%%%%%
%%%%%%%%%%%%%%%%%%%%%%%%%%%%%%%%%%%%%%%%%%%%
\begin{proof}[Proof of Theorem \ref{mainsquare}]
Let $N\in\N$ and let $X_N$ be a minimizer of $\F_N$ in $\A_N$. 
By Proposition \ref{basicprop} we have that $\G(X_N)$ is connected and, if $N\ge 3$, $\G(X_N)$ has no wire edges.

Assume now that $N\ge 6$. Let $\TT=\TT_{X_N}$ be the map provided by Proposition \ref{isomorphism}; we set $Z_N:=\TT X_N$. Then, by Proposition \ref{isomorphism}, $Z_N$ is a minimizer of $\F_N$ in $\A_{N}^{\Z^2}$ and, for every $x\in X_N$, we have $\F(x)=\F(\TT x)$. 
By Proposition \ref{pieno}, we have that the faces of $Z_N$ are either (crystallized) triangles or crystallized squares and, by Proposition \ref{simplyclosedbdryy}, $A(Z_N)$ has simple boundary.
Hence,
also $A(X_N)$ has and all the faces of $X_N$ are either triangles or crystallized squares.

Since $\TT$ maps crystallized square faces in crystallized square faces, the only thing to check is whether there is some  triangular face in $\Fa(X_N)$ whose vertices do not belong to (any translation of) $\Z^2$.
But this cannot happen since, in view of Lemma \ref{ancorapiurig} and Remark \ref{rigidita}, the triangular faces of $Z_N$ are rigid. 
This concludes the proof of Theorem \ref{mainsquare}.
\end{proof}
%%%%%%%%%%%%%%%%%%%%%%%%%%%%%%%%%%%%%%%%%%%%
%%%%%%%%%%%%%%%%%%%%%%%%%%%%%%%%%%%%%%%%%%%%
%%%%%%%%%%%%%%%%%%%%%%%%%%%%%%%%%%%%%%%%%%%%
%%%%%%%%%%%%%%%%%%%%%%%%%%%
%%%%%%%%%%%%%%%%%%%%%%%%%%%
%%%%%%%%%%%%%%%%%%%%%%%%%%%
%%%%%%%%%%%%%%%%%%%%%%%%%%%
%%%%%%%%%%%%%%%%%%%%%%%%%%%
%%%%%%%%%%%%%%%%%%%%%%%%%%%
\section{\texorpdfstring{$\Gamma$}{Gamma}-convergence}\label{sec:gamma_convergence}
In this final section we prove the $\Gamma$-convergence result for the energy $\F_N(\cdot)=\E(\cdot)+4N$.
We will show that the $\Gamma$-limit of the functionals $N^{-1/2}\F_N$ as $N\to +\infty$ will be given by the anisotropic perimeter
\begin{equation}\label{eq:def_anisotropy}
\Per_\phi(E):=\int_{\partial^*E} \phi(\nu_E(x)) \ud\Hcal^1(x),
\end{equation}
where the map $\phi$ is defined in \eqref{finsler}.
To this end, we recall (see\eqref{eq:A(X)_definition} and  \eqref{defAS}) that for every $X\in\A$,  %the definition of $A_{\boxtimes}(X_N)$ as
\[
A_{\boxtimes}(X):=\bigcup_{F\in F_\boxtimes(X)}F=A_{\Fsf_{\boxtimes}(X)}(X).
\]
%%%%%%%%%%%%%%%%%%%%%%%%%%%%%%%%%
%%%%%%%%%%%%%%%%%%%%%%%%%%%%%%%%%
%%%%%%%%%%%%%%%%%%%%%%%%%%%%%%%%%
%More precisely we will prove the following result.
%%%%%%%%%%%%%%%%%%%%%%%%%%%%%%%%%
%%%%%%%%%%%%%%%%%%%%%%%%%%%%%%%%%
%%%%%%%%%%%%%%%%%%%%%%%%%%%%%%%%%
\begin{theorem}%[$\Gamma$-limit for the square norm]
\label{thm:gamma_convergence}
%%\begin{enumerate}
    %\item \textbf{Compactness.} Given a sequence with $\sup_N \F_N(X_N)<\infty$, there exists a subsequence (not relabeled) and tranlsation vectors $\tau_N\in\R^2$ such that $\theta_N(\cdot-\tau_N)\to e_1\1_E$, where $E$ is a set of finite perimeter.
    %\item \textbf{Gamma-convergence.} 
    The functionals $N^{-1/2}\F_N$ $\Gamma$-converge to $\Per_\phi$, namely:
    \begin{enumerate}
       \item %\textbf
        ($\Gamma$-liminf inequality) For every sequence of configurations $\{X_N\}_{N\in\N}\subset\A$, with $X_N\in\A_N$ (for every $N$), such that the sets $N^{-1/2}A_\boxtimes(X_N)$ converge locally in measure to some finite perimeter set $E\subset \R^2$, it holds
        \[
        \Per_\phi(E)\le\liminf_{N\to\infty} N^{-1/2}\F_N(X_N);
        \]
       \item %\textbf
       ($\Gamma$-limsup inequality) For every finite perimeter set $E$, there exists a %\emph{recovery sequence} 
        sequence $\{X_N\}_{N\in\N}\subset\A$ with $X_N\in\A_N$ (for every $N$), such that the sets
        $N^{-1/2}A_\boxtimes(X_N)$ converge locally in measure to $E$ and
       \[
        \Per_\phi(E)\ge \limsup_{N\to\infty}N^{-1/2}\F_N(X_N).
        \]
    \end{enumerate}
%\end{enumerate}
\end{theorem}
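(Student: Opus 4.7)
I would prove the two halves of the $\Gamma$-convergence separately. The construction of the recovery sequence is relatively direct, while the lower bound requires the energy decomposition of Theorem~\ref{thm:decomposition_square}.

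For the $\Gamma$-limsup, by a standard approximation argument one reduces to the case where $E$ is a polygon whose sides are parallel to the facets of the Wulff shape of $\phi$, i.e., horizontal, vertical, or at $\pm\pi/4$ (such sets are dense in finite perimeter sets both in $L^1$ and in $\Per_\phi$). For such $E$, I would take $X_N := (\sqrt{N}\,E)\cap \Z^2$, correcting the cardinality by $o(\sqrt{N})$ boundary adjustments. Every interior point of $X_N$ has $8$ neighbours and contributes zero to $\F_N$; the remaining contributions come from a layer of thickness $O(1)$ around $\sqrt{N}\,\partial E$. A direct count shows that a horizontal or vertical boundary segment of length $L$ yields $3L$ and a diagonal segment of length $L$ yields $4L$, exactly matching $\int \phi(\nu)\,d\Hcal^1$. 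Hence $N^{-1/2}\F_N(X_N)\to \Per_\phi(E)$ and $N^{-1/2}A_\boxtimes(X_N)\to E$ in measure.

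For the $\Gamma$-liminf, after extracting a subsequence along which $\F_N(X_N)\le C\sqrt{N}$, the compactness and rigidity for low-energy sequences (Proposition~\ref{prop:compactness}) together with the invariance of $\phi$ under the dihedral group of the square allow me to assume, after a rigid motion, that every square face of $X_N$ has sides parallel to the coordinate axes. Consequently $\partial A_\boxtimes(X_N)$ consists only of horizontal and vertical unit segments, each of which weighs exactly $3$ both in $\Per_\phi$ and in $3\Per_\comb$. I would then apply Theorem~\ref{thm:decomposition_square} to a family $S_N\subseteq \Fsf_\bdd(X_N)$ containing $\Fsf_\boxtimes(X_N)$, chosen so that: (i) $N^{-1/2}|A_{S_N}(X_N)\triangle A_\boxtimes(X_N)|\to 0$, so that $N^{-1/2}A_{S_N}(X_N)$ still converges to $E$ in measure; (ii) the boundary of $A_{S_N}(X_N)$ is a union of axis-aligned unit segments and finitely many diagonal segments coming from hypotenuses of ``absorbed'' triangular faces. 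The natural candidate is obtained by adding to $\Fsf_\boxtimes(X_N)$ a suitable subfamily of $\Fsf_\triangle(X_N)$ while discarding the non-triangular planar faces of large defect (each such face has $\delta\geq 4$, so by the energy bound there can be only $O(\sqrt{N})$ of them, and an isoperimetric estimate bounds their total area by $o(N)$). Keeping track in \eqref{formuladeco} of (a) the triangle defects $\sum_{F\in S_N}\delta(F)=\#(S_N\cap \Fsf_\triangle)$, (b) the surplus $\Per_\phi(A_{S_N})-3\Per_\comb(A_{S_N})=\#\{\text{diagonal boundary edges of }A_{S_N}\}$, which is bounded by the number of absorbed triangles, and (c) the remaining terms $8\#\Ccal$, $-8\#(\Fsf_\bdd\setminus S_N)$, and $6\#\Ed^{\ext}_{S_N}$, a careful combinatorial estimate yields
\[
\F_N(X_N)\geq \Per_\phi(A_{S_N}(X_N))+o(\sqrt{N}).
\]
Dividing by $\sqrt{N}$ and invoking the lower semicontinuity of $\Per_\phi$ with respect to local $L^1$-convergence completes the argument.

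The main obstacle is precisely the last combinatorial step: pinning down the family $S_N$ and verifying that all the cross-terms in the decomposition combine to give a non-negative error of size $o(\sqrt{N})$, while simultaneously controlling the areas of the excluded large-defect faces via isoperimetry and the energy bound so that $A_{S_N}$ and $A_\boxtimes$ share the same limit in measure.
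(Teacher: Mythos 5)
Your $\Gamma$-limsup argument matches the paper's (polygonal approximation, $X_N=(\sqrt{N}E)\cap\Z^2$ with an $o(\sqrt{N})$ cardinality correction, directional counting of the missing bonds), so that half is fine. The $\Gamma$-liminf, however, has a genuine gap in the choice of $S_N$. In the decomposition \eqref{formuladeco} the only negative term is $-8\#(\Fsf_\bdd(\G)\setminus S_N)$, so the whole argument hinges on making the number of \emph{discarded} bounded faces $o(\sqrt{N})$. You propose to discard every planar face that is not a triangle, observing that each such face has $\delta(F)\ge 4$ and hence that there are $O(\sqrt{N})$ of them. But $O(\sqrt{N})$ is exactly the order you cannot afford: a configuration containing $\sim c\sqrt{N}$ diamond-shaped quadrilateral faces (each of defect $4$) is compatible with the energy bound $\F_N(X_N)\le C\sqrt{N}$, and for it your discarded-face term is $-8c\sqrt{N}$, which survives the division by $\sqrt{N}$ and destroys the claimed inequality $\F_N(X_N)\ge \Per_\phi(A_{S_N}(X_N))+o(\sqrt{N})$. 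The paper avoids this by taking $S_N$ to be all crystallized squares \emph{plus all faces with $\Per_\comb(F)\le N^\beta$} for a fixed $\beta\in(0,\tfrac12)$: a Chebishev-type bound then gives $\#(\Fsf_\bdd\setminus S_N)\le 8CN^{1/2-\beta}=o(\sqrt{N})$, while the isoperimetric inequality $|F|\le c\,\Per_\comb(F)^2\le c\,\Per_\comb(F)N^\beta$ controls the total area of the retained non-square faces by $O(N^{\beta+1/2})=o(N)$, so that $N^{-1/2}A_{S_N}(X_N)$ still converges to $E$.

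Two further points. First, you cannot ``assume after a rigid motion'' that the square faces are axis-aligned: $\F_N$ is not rotation invariant, so rotating the configuration changes its energy; fortunately no rotation is needed, since Lemma \ref{crossquadr} already forces every crystallized square face to be a unit square with axis-parallel sides. Second, the oblique sides of a triangular face need not lie at angle $\pi/4$ (Lemma \ref{lemma:minimal_angle}(1) only guarantees one horizontal or vertical side), so a ``diagonal'' boundary edge does not in general have $\phi$-length $4$, and your bookkeeping of the surplus $\Per_\phi-3\Per_\comb$ as a count of diagonal edges is not quite the right object. The correct accounting is the per-face inequality $3\#\Ed^\partial_S(F)+\delta(F)\ge\sum_e\ell_\phi([e])$ of Lemma \ref{lemma:delta_versus_P}, whose triangle case rests on the identity $\ell_\phi([e_2])+\ell_\phi([e_3])=6+\tfrac{1}{\tan\alpha}+\tfrac{1}{\tan\beta}=7$: the defect $\delta(F)=1$ exactly pays for the excess $\phi$-length of the two oblique sides regardless of their slope.
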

%%%%%%%%%%%%%%%%%%%%%%%%%%%%%%%%%
%%%%%%%%%%%%%%%%%%%%%%%%%%%%%%%%%
%%%%%%%%%%%%%%%%%%%%%%%%%%%%%%%%%

We remark that, by constraining the particles to the lattice $\Z^2$, this result was already known (see, e.g., \cite[Theorem~1.1]{DelNinPetrache}). Indeed, in that setting our potential becomes equivalent to considering an energy that activates both first and second neighbors. The strength of our result is that we do not require any constraint on the particles.

Before proving Theorem \ref{thm:gamma_convergence}, we state and prove the corresponding compactness result. %Clearly, it is obtained with respect to the same norm used in the $\Gamma$-convergence analysis.

%We first show a compactness result, which indicates that the convergence used in the $\Gamma$-convergence statement is indeed the natural one descending from a uniform bound on the energy. We then separately prove the $\Gamma$-$\liminf$ inequality (which uses the energy decomposition of Theorem \ref{thm:decomposition_square}) and the $\Gamma$-$\limsup$ inequality. 
%Recall from \eqref{eq:A(X)_definition} the definition of %$A(X_N)$:
%\[
%A(X_N)=\bigcup_{F\in F_\boxtimes(X_N)}F.
%\]
%%%%%%%%%%%%%%%%
%%%%%%%%%%%%%%%%
%%%%%%%%%%%%%%%%
\subsection{Compactness}
%Before stating our compactness result we recall (see\eqref{eq:A(X)_definition} and  \eqref{defAS}) that for every $X\in\A$,  %the definition of $A_{\boxtimes}(X_N)$ as
%\[
%A_{\boxtimes}(X):=\bigcup_{F\in %F_\boxtimes(X)}F=A_{\Fsf_{\boxtimes}(X)}(X).
%\]
%%%%%%%%%%%%%%%%
%%%%%%%%%%%%%%%%
%%%%%%%%%%%%%%%%
\begin{proposition}[Compactness]\label{prop:compactness}
    Let $\{X_N\}_{N\in\N}\subset\A$ be a sequence such that $X_N\in \A_N$ with 
    \begin{equation}\label{bounden}
    \F_N(X_N)\le CN^{1/2},
    \end{equation}
    for some constant $C$ (independent of $N$). Then:
    \begin{enumerate}
        \item $\# \Fsf_\boxtimes(X)\ge N- C N^{1/2}$.
        \item There exists a subsequence $\{N_k\}_{k\in\N}$ such that 
        \[
        N_k^{-1/2}A_\boxtimes(X_{N_k})\to E
        \]
    locally in measure for some finite perimeter set $E$.
    \end{enumerate}
\end{proposition}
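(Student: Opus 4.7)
The proof is driven by two applications of the energy decomposition of Theorem~\ref{thm:decomposition_square} to the bond graph $\G(X_N)$ of $X_N$. Throughout the plan, $C$ denotes a (possibly changing) positive constant independent of $N$, and I use that $\F_N(X_N)$ is equivalent, up to a factor of $2$, to $\sum_{x\in X_N}(8-\deg(x))$.

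For part (1), I would bypass the decomposition and argue directly. Setting $n_k := \#\{x\in X_N : \deg(x)=k\}$, and using $\deg\le 8$ from Lemma~\ref{lemma:minimal_angle}(3),
\[
\sum_{k=0}^{7} n_k \;\le\; \sum_{k=0}^{7}(8-k)\,n_k \;=\; \sum_{x\in X_N}\bigl(8-\deg(x)\bigr) \;\le\; C N^{1/2},
\]
so that $n_8 \ge N - C N^{1/2}$. Lemma~\ref{lemma:minimal_angle}(3) says that every vertex with $\deg=8$ is a corner of four faces of $\Fsf_\boxtimes(X_N)$, and by Lemma~\ref{crossquadr} each face in $\Fsf_\boxtimes$ has exactly four vertices. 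Double counting (vertex, face)-incidences gives $4 n_8 \le 4\,\#\Fsf_\boxtimes(X_N)$, whence $\#\Fsf_\boxtimes(X_N)\ge n_8 \ge N - C N^{1/2}$.

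For part (2), I would apply the decomposition with $S=\Fsf_\boxtimes(X_N)$: since $\delta(F)=0$ on $\Fsf_\boxtimes$ and the two terms $8\,\#\Ccal(\cdots)$ and $6\,\#\Ed^\ext_{\Fsf_\boxtimes}$ are non-negative, rearranging yields
\[
3\,\Per_\comb\bigl(A_\boxtimes(X_N)\bigr) \;\le\; \F_N(X_N) \;+\; 8\,\#\bigl(\Fsf_\bdd(X_N)\setminus \Fsf_\boxtimes(X_N)\bigr).
\]
The extra term is in turn controlled by running the decomposition with $S=\Fsf_\bdd(X_N)$: all summands are non-negative and $\delta(F)\ge 1$ for every planar face $F\notin\Fsf_\boxtimes$, so $\#(\Fsf_\bdd\setminus\Fsf_\boxtimes)\le C N^{1/2}$. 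Combining, $\Per_\comb(A_\boxtimes(X_N))\le C N^{1/2}$.

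The last step is geometric. By Lemma~\ref{crossquadr} every face in $\Fsf_\boxtimes$ is an axis-parallel unit square, and the admissibility $\|x-y\|_\infty\ge 1$ forces distinct such squares to meet only along shared full unit edges; consequently $A_\boxtimes(X_N)$ is a disjoint union of polyomino-like pieces whose topological boundary is exactly the union of the edges counted in $\Per_\comb$, so $\Per(A_\boxtimes(X_N)) = \Per_\comb(A_\boxtimes(X_N))\le C N^{1/2}$. After rescaling, $\Per\bigl(N^{-1/2}A_\boxtimes(X_N)\bigr)\le C$ and $|N^{-1/2}A_\boxtimes(X_N)|=N^{-1}\#\Fsf_\boxtimes(X_N)\le 1$, and the standard $BV$-compactness theorem for sets of finite perimeter delivers a subsequence $\{N_k\}$ and a finite perimeter set $E\subset\R^2$ with $N_k^{-1/2}A_\boxtimes(X_{N_k})\to E$ locally in measure. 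The only genuinely delicate step is the identification $\Per=\Per_\comb$, which rests on the admissibility of $X_N$ to rule out partial overlaps between distinct squares; once this is pinned down, the remainder is bookkeeping with the decomposition plus an off-the-shelf compactness theorem.
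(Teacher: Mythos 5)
Your proof is correct, but for part (2) it takes a genuinely different route from the paper. Part (1) is essentially the paper's argument (the paper uses the injection sending a degree-$8$ vertex to the crystallized square of which it is the lower-left corner, rather than your incidence double count, but this is the same idea); note only that $\F_N(X_N)=\sum_{x}(8-\deg(x))$ holds \emph{exactly} by \eqref{eq:excess_energy_def}, not merely up to a factor of $2$. For part (2) the paper does not invoke Theorem \ref{thm:decomposition_square} at all: it observes that any edge lying on $\partial A_\boxtimes(X_N)$ must have both endpoints of degree at most $7$ (since a degree-$8$ vertex is fully surrounded by crystallized squares by Lemma \ref{lemma:minimal_angle}(3)), and that each such vertex lies on at most $7$ boundary edges, giving $\Per(A_\boxtimes(X_N))\le 7\,\#\{x:\deg(x)\le 7\}\le 7CN^{1/2}$ directly. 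Your double application of the energy decomposition (once with $S=\Fsf_\boxtimes$ to isolate $3\Per_\comb(A_\boxtimes)$, once with $S=\Fsf_\bdd$ together with $\delta(F)\ge 1$ for planar faces to control $\#(\Fsf_\bdd\setminus\Fsf_\boxtimes)$) is valid and yields the same $O(N^{1/2})$ bound; it is a heavier tool here, but it previews exactly the mechanism used later in the $\Gamma$-liminf proof, and you are more explicit than the paper about the identification $\Per=\Per_\comb$ on $A_\boxtimes$ (ruling out partial overlaps of crystallized squares via the $\|\cdot\|_\infty\ge 1$ separation), a point the paper passes over silently. Both routes are sound.
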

%%%%%%%%%%%%%%%%
%%%%%%%%%%%%%%%%
%%%%%%%%%%%%%%%%
\begin{proof} %For every $N\in\N$ we set $A_{\boxtimes,N}:=A_{\boxtimes}(X_N)$, where $A_{\boxtimes}(X_N)$ is the set defined in \eqref{eq:A(X)_definition}.
We prove the two properties in order.

(1)  For every $N\in\N$ we have the following chain of inequalities:
\[
CN^{1/2}\ge \F_N(X_N)=\sum_{x\in X_N} \F(x)=\sum_{\substack{x\in X_N\\ \deg(x)\le 7 }} \F(x)\ge \#\{x\in X_N:\, \deg(x)\le7\}.
\]
Therefore, all points of $X_N$, except for at most $CN^{1/2}$, have 8 neighbors. By Lemma \ref{lemma:minimal_angle}(3) it descends that all such points are crystallized. In particular, to each such vertex $x$, we can associate the crystallized square face whose lower left vertex is $x$. This association is injective; hence it follows that $\# \Fsf_\boxtimes(X)\ge N- C N^{1/2}$.

(2)  We aim to show that $\Per(A_\boxtimes(X_N))\le CN^{1/2}$ (for every $N\in\N$). 
Now observe that if a point $x\in X_N$ has degree 8, then the edges containing $x$ cannot be part of $\partial A_\boxtimes(X_N)$, because the configuration is crystallized around $x$ by Lemma \ref{lemma:minimal_angle}(3). Therefore, for every edge $e=\{x,x'\}$ satisfying $[x,x']\subseteq \partial A_\boxtimes(X_N)$ we must have that both $x$ and $x'$ have degree at most 7. Moreover, for the same reason, a vertex $x$ cannot be part of more than 7 distinct edges in $\partial A_\boxtimes(X_N)$ (in fact even less, but 7 is good enough for the proof). Hence by Point (1)
\begin{align*}
    \Per(A_N)&=\#\{e\in \Ed(X_N):\, [e]\in \partial A_\boxtimes (X_N)\}\\
    &\le 7 \#\{x\in X_N:\, \deg(x)\le 7\}\\
    &\le 7C N^{1/2}.
\end{align*}
It follows that the sets $N^{-1/2} A_\boxtimes(X_N)$ have uniformly bounded perimeter. By the compactness of finite perimeter sets (see \cite[Theorem~3.39]{AFP}) we deduce that, up to a subsequence, they converge locally in measure to some finite perimeter set $E$. This finishes the proof.
\end{proof}

\subsection{Proof of Theorem \ref{thm:gamma_convergence}}\label{sec:gamma_liminf}
This subsection is devoted to the proof of Theorem \ref{thm:gamma_convergence}.
We start by proving the lower bound inequality Theorem \ref{thm:gamma_convergence}(i). 
%Given a finite energy configuration $X_N$, $\# X_N=N$, we define the set $A(X_N)$ to be the union of all the crystallized (and thus non-planar) square faces.
%Recall the anisotropy $\phi$ and the corresponding anisotropic perimeter $\Per_\phi$ defined in \eqref{finsler} and \eqref{eq:def_anisotropy} respectively.

%\begin{proposition}[$\Gamma$-$\liminf$ inequality]\label{prop:gamma_liminf}
  %  Let $C>0$ be a fixed constant, and let $X_N$, $\# X_N=N$, be a sequence such that
 %   \[
    %\F(X_N)\le C\sqrt{N}.
   % \]
  % Let $\{X_N\}_{N\in\N}\subset\A$ be such that $X_N\in\A_N$ for every $N\in\N$. 
   % Suppose that the sets $\tfrac{1}{\sqrt{N}}A_\boxtimes(X_N)$ converge locally in measure to a finite perimeter set $E$. Then
    %\[
    %\Per_\phi(E)\le \liminf_{N\to\infty} \frac{1}{\sqrt{N}}\F(X_N).
    %\]
%\end{proposition}

The idea is to construct a link between $\F_N(X_N)$ and  the $\phi$-anisotropic perimeter of $A_S(X_N)$, for a suitable choice of $S\subset \Fsf(X)$, in order to deduce our result from the lower semicontinuity of $\Per_\phi$. We start with the following observation: From Theorem \ref{thm:decomposition_square}, by neglecting positive terms, we immediately obtain that
\[
\F_N(X_N)\ge 3\Per_\comb(A_S(X_N))-8\#(\Fsf(X_N)\setminus S_N).
\]
By suitably choosing $S_N$ to be the family of ``small perimeter faces'', we will ensure that the negative term (when rescaled by $N^{1/2}$) goes to zero. This almost gives the desired lower bound inequality, but with $3\Per_\comb(A_{S_N}(X_N))$ instead of $\Per_\phi(A_{S_N}(X_N))$. Observe that the anisotropic $\phi$-length of every unit segment in the norm $\|\cdot\|_\infty$ always lies in the interval $[3,4]$, reaching value 3 only for horizontal and vertical segments and the value $4$ only for the ``diagonal'' ones. The inequality is thus not optimal, as we are losing something on every diagonal edge. The key result to fix this is the following lemma, which proves that we can replace the combinatorial perimeter with the anisotropic one by exploiting the defect of the faces.
In the following, for any given segment $I$ we denote by $\ell_\phi(I)$ its $\phi$-length, that is its Euclidean length multiplied by $\phi(\tfrac{e^\perp}{|e^\perp|})$.
%%%%%%%%%%%%%%%
%%%%%%%%%%%%%%%
%%%%%%%%%%%%%%%
\begin{lemma}\label{lemma:delta_versus_P}
Let $X\in\A$ and let $F\in\Fsf(X)$. Let 
%$\Ed^{\partial} (F)=\{e_1,\ldots,e_K\}$.
    %Consider a face $F$ with simple polygonal boundary having $k$ sides, 
    %and let
    $\mathsf{E}:=\{e_1,\ldots,e_M\}\subset\Ed(F)$, with $1\le M\le \sharp\Ed(F)$, be any collection of distinct edges of $\partial F$. 
    Then it holds  
    \begin{equation}\label{claimdelta}
    3M +\delta(F)\ge \sum_{m=1}^M \ell_\phi([e_m]).
    \end{equation}
    %\red{Moreover, equality can be attained only if the chosen sides are either orthogonal or diagonal.}
    %\textcolor{blue}{Questa affermazione non \`e vera nei casi $m=3$ e $m=2$ (secondo sottocaso) dimostrati sotto; per\`o risulta vera ad esempio nel caso in cui il perimetro \`e $\ge 4$. La usiamo da qualche parte?}
    %\[
    %\delta(F)\ge %P_\phi(F)-3k=P_\phi(F)-3P(F).
    %\]
\end{lemma}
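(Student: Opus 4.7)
My plan is to rewrite \eqref{claimdelta} in a more convenient form using an explicit expression for $\ell_\phi$ on unit bond edges, and then to proceed by a short case analysis depending on the combinatorial and topological type of $F$; the main obstacle will turn out to be the triangular case, where the inequality is tight.

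First, I would observe that for any bond edge $e=\{x,y\}$, writing $y-x=(a,b)$ with $\max(|a|,|b|)=1$, the definition \eqref{finsler} of $\phi$ gives
\[
\ell_\phi([e])=|a|+|b|+|a+b|+|a-b|=3+\min(|a|,|b|)=3+\tan\alpha(e),
\]
where $\alpha(e)\in[0,\pi/4]$ is the smaller angle between $e$ and the nearest horizontal or vertical axis. In particular $\ell_\phi([e])\in[3,4]$, with the extremes attained exactly on axis-aligned and on $45^\circ$-diagonal edges. This turns \eqref{claimdelta} into the equivalent inequality $\delta(F)\ge \sum_{m=1}^M\tan\alpha(e_m)$, and since each summand is nonnegative it is enough to prove it with $\mathsf{E}$ enlarged to the full set of edges of $\partial F$.

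I would then dispatch the non-triangular cases by crude counting. If $F\in\Fsf_\boxtimes$, then $\delta(F)=0$ and the edges of $\partial F$ are the four axis-aligned sides of a unit square, so both sides vanish. If $F$ is a planar face with $\#\Ccal^{\Int}(\partial F)\ge 1$---which covers $F=F^\ubd$ as well as faces containing interior holes or wire components---then \eqref{eq:delta_F}--\eqref{eq:delta_F_unbounded} yield $\delta(F)\ge 3\Per_\comb(F)\ge \#\Ed(F)$, and since $\tan\alpha\le 1$ on every edge we conclude. If $F$ is simply connected with $k:=\#\Ed^\partial(F)\ge 4$ sides (and hence no wires), then $\delta(F)=3k-8\ge k\ge \sum\tan\alpha$.

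The last, and only non-trivial, case is $F$ a triangle, where $\delta(F)=1$. Here I would invoke Lemma~\ref{lemma:minimal_angle}(1): every triangular face has at least one horizontal or vertical side, and by the invariance of $\|\cdot\|_\infty$ and $\tan\alpha$ under $\pi/2$-rotations and coordinate reflections I may place this side as $a=(0,0)$, $b=(1,0)$. A short description of the intersection of the two $\|\cdot\|_\infty$-unit spheres centered at $a$ and $b$ forces the third vertex $c$ to lie in $[0,1]\times\{-1,1\}$; writing $c=(t,\pm 1)$ with $t\in[0,1]$, one finds $\tan\alpha(ab)=0$, $\tan\alpha(ac)=t$, $\tan\alpha(bc)=1-t$, and the sum equals $1=\delta(F)$ with equality. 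This triangular case is where the real work lies: the estimate is tight, leaving no slack for loose bounds, and the argument relies on the explicit classification of equilateral $\|\cdot\|_\infty$-triangles provided by Lemma~\ref{lemma:minimal_angle}(1); all the other faces carry enough extra angular defect that the trivial bound $\tan\alpha\le 1$ already suffices.
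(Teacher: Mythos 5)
Your proof is correct, and while it follows the same overall skeleton as the paper's (a case analysis on the type of face, with the equilateral $\|\cdot\|_\infty$-triangle as the only tight case), your opening reduction is a genuine and worthwhile streamlining. The identity $\ell_\phi([e])=|a|+|b|+|a+b|+|a-b|=3+\min(|a|,|b|)=3+\tan\alpha(e)$ for a bond $e$ with $y-x=(a,b)$ turns \eqref{claimdelta} into $\delta(F)\ge\sum_m\tan\alpha(e_m)$ with nonnegative summands, which makes the monotonicity in $M$ transparent and lets you pass at once to the full edge set of $\partial F$; the paper instead keeps $\ell_\phi\in[3,4]$ as a black box and is forced into separate subcases $M=1,2,3$ for triangles. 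Your triangle computation $\tan\alpha(ac)+\tan\alpha(bc)=t+(1-t)=1=\delta(F)$ is exactly the paper's identity $\Per_\phi(F)=10$ (equivalently $\tfrac1{\tan\alpha}+\tfrac1{\tan\beta}=1$), obtained without the trigonometric detour through $|e_2|=1/\sin\alpha$ and $\phi(\nu_{e_2})=3\sin\alpha+\cos\alpha$. The non-triangular cases are handled by the same crude counting as in the paper ($\delta(F)\ge 3\Per_\comb(F)-8$ together with $\tan\alpha\le 1$). One small imprecision: in your case of a simply connected face with $\#\Ed^\partial(F)\ge 4$, the parenthetical ``and hence no wires'' is not a valid implication (a single boundary component can still carry interior wire edges attached to it); but this is harmless, since $\delta(F)=3\Per_\comb(F)-8=3(\#\Ed^\partial(F)+2\#\Ed^{\Wi,\Int}(F))-8\ge \#\Ed(F)$ whenever $\#\Ed^\partial(F)\ge 4$, so your bound survives unchanged.
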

%%%%%%%%%%%%%%%%
%%%%%%%%%%%%%%%%
%%%%%%%%%%%%%%%%
\begin{proof}
    Observe first that, if $F\in\Fsf_\boxtimes(\G)$, then $\delta(F)=0$ and $\ell_\phi([e])=3$ for every boundary edge $e\in\Ed^\partial(F)$. Thus, the inequality holds as equality. 
    
    First suppose that $\Per_\comb(F)\ge 4$. By construction, this implies that $\sharp \Ed^{\partial}(F)\ge 4$. Then, by \eqref{eq:delta_F}, we have $\delta(F)\ge 3\Per_{\comb}(F)-8$. Moreover, by the very definition of $\phi$, we have $\ell_\phi([e_m])\le 4 $ for every $m=1,\ldots,M$. It follows that
    \[
    3M+\delta(F)\ge 3M+3\Per_{\comb}(F)-8\ge 3M+\Per_\comb(F)\ge 4M \ge \sum_{m=1}^M \ell_\phi([e_m]),
    \]
    and the conclusion is reached.
    % \[
    % \delta(F)+3k\ge 6k-8\ge 2k-8+P_\phi(F)\ge P_\phi(F).
    % \]
    
    We are left with the case $\Per_\comb(F)=3$, that is when $F$ is a triangle. In this case $\delta(F)=1$. There are three subcases:
    \begin{itemize}
        \item If $M=1$, then trivially $3+\delta(F)=4\ge \ell_\phi([e_1])$, which proves \eqref{claimdelta}.
        %\textcolor{blue}{Notice that if equality holds true in this case, then $P_{\phi}(I_1)=4$ and hence $I_1$ is a diagonal edge; this shows also the last sentence of the statement in this case.}
        \item If $M=3$, then we are going to show that $\Per_\phi(F)=10$, %\textit{for any admissible triangle $F$}, 
        from which \eqref{claimdelta} follows with an equality. 
        By Lemma \ref{lemma:minimal_angle}(1) one of the sides of $F$ must be orthogonal. Up to symmetries we can assume that the vertices of $F$ are the points $(0,0)$, $(1,0)$ and $(t,1)$ for some $t\in [0,\tfrac12]$. Set $e_1:=\{(0,0), (1,0)\}$, $e_2:=\{(0,0), (t,1)\}$, $e_3:=\{(t,1), (1,1)\}$ and
        %Call $I_0$ the horizontal side, $I_1$ the non-horizontal side containing the origin and $I_2$ the remaining side. 
        call $\alpha$ the angle between $[e_1]$ and $[e_2]$ and $\beta$ the angle between $[e_1]$ and $[e_3]$.
        \begin{figure}[htbp]
    \centering
{\def\svgwidth{120pt}
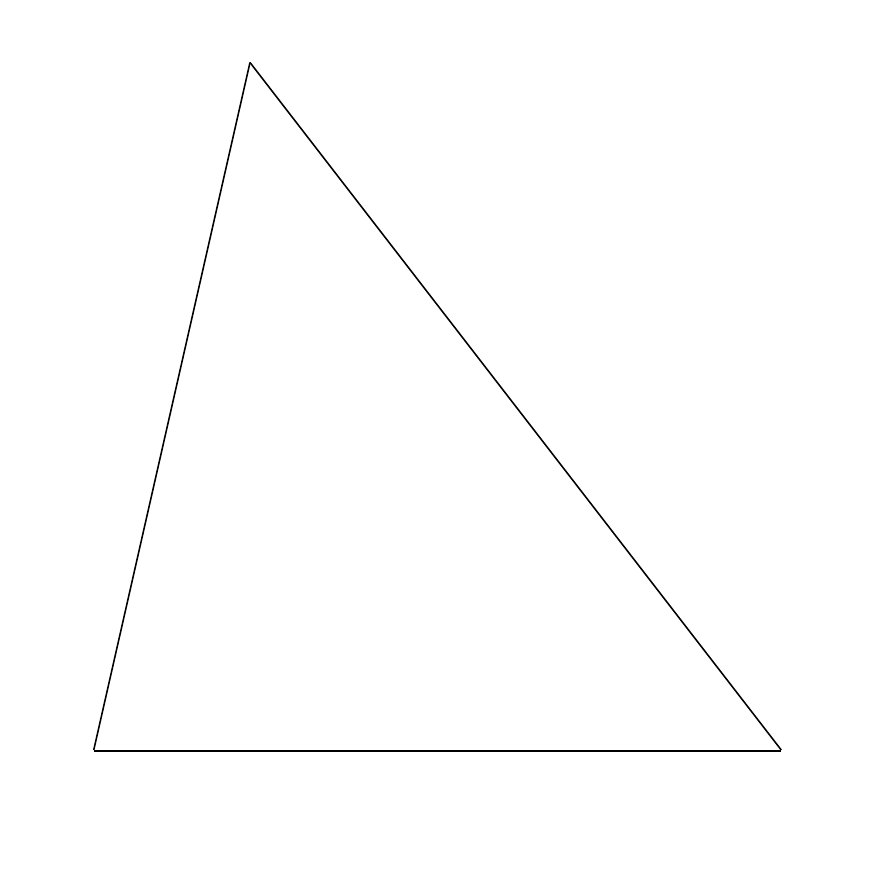}
    \caption{Visual aid for the proof of \eqref{claimdelta} for $M=3$.}
    \label{fig:stima}
\end{figure}
        Then, (see Figure \ref{fig:stima})
        \[
        |[e_2]|=\frac{1}{\sin\alpha}\qquad\textrm{and}\qquad |[e_3]|=\frac{1}{\sin\beta}.
        \]
        Also
        \begin{equation}\label{eq:1_over_tan}
        \frac{1}{\tan\beta}=1-\frac{1}{\tan\alpha}.
        \end{equation}
        We have that
        \[
        \phi(\nu_{e_2})=3\cos\big(\frac{\pi}{2}-\alpha\big)+\sin\big(\frac{\pi}{2}-\alpha\big)=3\sin\alpha+\cos\alpha.
        \]
        \[
        \phi(\nu_{e_3})=3\cos(\frac{\pi}{2}-\beta)+\sin(\frac{\pi}{2}-\beta)=3\sin\beta+\cos\beta.
        \]
        Using that $\ell_\phi([e]):=|[e]|\phi(\nu_e)$ we deduce
        \[
        \ell_\phi([e_2])=\frac{1}{\sin\alpha}(3\sin\alpha+\cos\alpha)=3+\frac{1}{\tan\alpha},
        \]
        and similarly
        \[
        \ell_\phi([e_3])=3+\frac{1}{\tan\beta}.
        \]
        We conclude that
        \begin{align*}
            \Per_\phi(F)&=\ell_\phi([e_1])+\ell_\phi([e_2])+\ell_\phi([e_3])\\
            &=3+3+\frac{1}{\tan\alpha}+3+\frac{1}{\tan\beta}\\
            &=10
        \end{align*}
        by \eqref{eq:1_over_tan}. This concludes the proof in the case $M=3$. 
        \item Finally, if $M=2$ then - adopting the notation in the previous case (see Figure \ref{fig:stima}) - either at least one of the two selected bonds is orthogonal, and the conclusion follows using that $3+\frac{1}{\tan \alpha}\le 4$; or instead both sides are diagonal, and then from the above computation $\ell_\phi([e_2])+\ell_\phi([e_3])=7$, which implies the thesis. \qedhere
    \end{itemize}
\end{proof}
%%%%%%%%%%%%%%%%%%%%%%%%%%%%%%%%
%%%%%%%%%%%%%%%%%%%%%%%%%%%%%%%%
%%%%%%%%%%%%%%%%%%%%%%%%%%%%%%%%
Using the previous lemma on all the faces in $S$, we can finally prove that the term with $\Per_\comb(A_S(X))$ and the sum of the defects of the faces bound from above the anisotropic perimeter $\Per_\phi(A_S(X))$. 
%%%%%%%%%%%%%%%%%%%%%%%%%%%%%%%%
%%%%%%%%%%%%%%%%%%%%%%%%%%%%%%%%
%%%%%%%%%%%%%%%%%%%%%%%%%%%%%%%%
\begin{lemma}\label{lemma:delta_versus_P_global}
    Let $X\in\A$. Let $S$ be a family of faces of $\G(X)$ with $\Fsf_\boxtimes(X)\subseteq S\subseteq \Fsf_\bdd(X)$. Then
    \[
    3\Per_\comb(A_S(X))+\sum_{F\in S} \delta(F)\ge \Per_\phi(A_S(X)).
    \]
\end{lemma}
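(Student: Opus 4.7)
The plan is to apply Lemma \ref{lemma:delta_versus_P} face-by-face over $F\in S$, taking as selected edges precisely those that lie on $\partial A_S(X)$, and then sum. The whole argument boils down to a careful bookkeeping identity relating $\Per_\comb(A_S(X))$ to the contributions coming from each face.

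First, for each $F\in S$ I would define
\[
\mathsf{E}_F:=\{e\in\Ed(F)\,:\,[e]\subset \partial A_S(X)\}.
\]
The key combinatorial step is to argue that $\{\mathsf{E}_F\}_{F\in S}$ partitions $\Ed^\partial_S(X)$. Given $e\in\Ed^\partial_S(X)$: if $e$ is an interior edge of $\G(X)$, it is shared by exactly two faces, and both cannot belong to $S$ (otherwise $[e]$ would lie in the interior of $A_S(X)$, contradicting $[e]\subset\partial A_S(X)$), so exactly one of them lies in $S$; if instead $e$ is a wire edge, then it lies in the closure of a unique face $F$, and necessarily $F\in S$, for otherwise $[e]\not\subset A_S(X)$. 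In both cases $e$ belongs to $\mathsf{E}_F$ for a unique $F\in S$, yielding $\sum_{F\in S}\#\mathsf{E}_F=\Per_\comb(A_S(X))$.

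Next, I would apply Lemma \ref{lemma:delta_versus_P} to each $F\in S$ with $\mathsf{E}_F\neq\emptyset$, obtaining
\[
3\#\mathsf{E}_F+\delta(F)\ge \sum_{e\in\mathsf{E}_F}\ell_\phi([e]).
\]
For faces with $\mathsf{E}_F=\emptyset$ the inequality $\delta(F)\ge 0$ (valid for every face, by the discussion in Section \ref{sec:energy_decomposition}) is enough. Summing over $F\in S$ and inserting the identity from the previous paragraph gives
\[
3\Per_\comb(A_S(X))+\sum_{F\in S}\delta(F)\ge \sum_{e\in\Ed^\partial_S(X)}\ell_\phi([e]).
\]

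Finally, to pass to $\Per_\phi(A_S(X))$ I would observe that, since $A_S(X)$ is the union of closed polygons (with possible wire slits), its reduced boundary $\partial^*A_S(X)$ is contained, up to $\Hcal^1$-null sets, in $\bigcup_{e\in\Ed^\partial_S(X)}[e]$: any portion of the topological boundary coming from a wire slit contributes zero to $\partial^*A_S(X)$ but a nonnegative amount to the right-hand side, which is harmless. Therefore
\[
\Per_\phi(A_S(X))=\int_{\partial^*A_S(X)}\phi(\nu)\ud\Hcal^1\le \sum_{e\in\Ed^\partial_S(X)}\ell_\phi([e]),
\]
and chaining with the previous display yields the statement. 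The main obstacle is the combinatorial verification in the first paragraph (the partition claim), where wire edges require individual attention; the passage from the topological boundary to the reduced boundary in the last step is then essentially a measure-theoretic triviality.
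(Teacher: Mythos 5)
Your proposal is correct and follows essentially the same route as the paper: apply Lemma \ref{lemma:delta_versus_P} to each $F\in S$ with the selected edges being exactly those lying on $\partial A_S(X)$, and sum over $S$. The extra care you take (the partition of $\Ed^\partial_S(X)$ among the faces of $S$, the faces with no selected edges via $\delta(F)\ge 0$, and the passage from the topological to the reduced boundary) only makes explicit bookkeeping that the paper leaves implicit.
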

%%%%%%%%%%%%%%%%%%%%%%%%%%%%%%%%
%%%%%%%%%%%%%%%%%%%%%%%%%%%%%%%%
%%%%%%%%%%%%%%%%%%%%%%%%%%%%%%%%
\begin{proof}
    For every face $F$, let us call $\Ed^\partial_S(F)$ the family of the edges $e$ of $F$ that contribute to the perimeter of $A_S(X)$, namely, those satisfying $[e]\subseteq \partial A_S(X)$. We apply Lemma \ref{lemma:delta_versus_P} to every face $F\in S$ with $\mathsf{E}=\Ed^\partial_S(F)$, and to the edges in $\Ed_S^\partial(F)$:
    \begin{align*}
    3\Per_\comb(A_S(X))+\sum_{F\in S}\delta(F)&\ge\sum_{F\in S} \Big(3\# \Ed^\partial_S(F)+\delta(F)\Big)\\
    & \ge \sum_{F\in S} \sum_{[e]\in \Ed^\partial_S(F)} \ell_\phi([e])\\
    &=\Per_\phi(A_S(X)).\qedhere
    \end{align*}
\end{proof}
We are finally able to prove the $\Gamma$-$\liminf$ inequality.

\begin{proof}[Proof of Theorem \ref{thm:gamma_convergence}(i)]
We can assume without loss of generality that \eqref{bounden} holds true.
    Fix some $\beta\in (0,\tfrac12)$. Define $S_N$ to be the family of all crystallized square faces together with all the faces $F\in\Fsf(X_N)$ such that $\Per(F)\le N^\beta$. Accordingly, let 
    \[
    \widetilde A_N:=\bigcup_{F\in S_N} F.
    \]
    First we claim that 
    \begin{equation}\label{eq:tilde_A_N_converge_to_E}
    N^{-1/2} \widetilde A_N\to E \qquad\text{locally in measure.}
    \end{equation}
    Indeed, by the isoperimetric inequality we have that $|F|\leq c \Per^2(F)$ (for some constant $c$ independent of $F$), which, together with \eqref{bounden}, implies the following estimate:
    \begin{align*}
    |\widetilde A_N\setminus A_\boxtimes(X_N)|&=\sum_{\substack{F:\Per(F)\le N^\beta\\ F\not\in \Fsf_\boxtimes(X)}} |F|
    \le \sum_{\substack{F:\Per(F)\le N^\beta\\ F\not\in \Fsf_\boxtimes(X)}} c \Per^2(F)\\
    &\le \sum_{\substack{F:\Per(F)\le N^\beta\\ F\not\in \Fsf_\boxtimes(X)}} c \Per(F) N^\beta\\
    &\le c N^\beta \sum_{\substack{F:\Per(F)\le N^\beta\\ F\not\in \Fsf_\boxtimes(X)}} \sum_{v\in \mathrm{clos}(F)\cap X} \F(v)\\
    &\le c8N^\beta \F_N(X_N)\le c 8C N^{\beta+1/2},
    \end{align*}
    and by the choice $\beta<\tfrac12$ this implies
    \[
    |(N^{-1/2}\widetilde A_N)\setminus (N^{-1/2}A_\boxtimes(X_N))|\lesssim N^{\beta-1/2}\to 0,
    \]
    which proves \eqref{eq:tilde_A_N_converge_to_E}.

    Next we claim that 
    \begin{equation}\label{eq:chebishev_bound_cardinality}
        \# \{F\in \Fsf(X_N)\,:\, \Per(F)> N^\beta\}\le 8C N^{1/2-\beta}.
    \end{equation}
    This follows from the following Chebishev-type estimate:
    \begin{align*}
        \#\{F:\, \Per(F)> N^\beta\} N^\beta &\le \sum_{\{F:\, \Per(F)> N^\beta\}} \Per(F)\\
        & \le \sum_{\{F:\, \Per(F)> N^\beta\}} \sum_{v\in \mathrm{clos}(F)\cap X} \F(v)\\
        &\le 8 \F_N(X_N)\le 8C N^{1/2}.
    \end{align*}
Now we apply the energy decomposition given by Theorem \ref{thm:decomposition_square}, with $S=S_N$, to obtain that 
   \begin{align*}
    \F_N(X_N)&\ge 3\Per(\widetilde A_N)-8\#(\Fsf(X)\setminus S_N)+\sum_{F\in S_N}\delta(F)\\
    & \ge 3 \Per(\widetilde A_N) -64 C N^{1/2-\beta}+\sum_{F\in S_N}\delta(F)\\
    &\ge \Per_\phi(\widetilde A_N) -64 C N^{1/2-\beta}.
    \end{align*} 
   % \begin{align*}
   % \F(X_N)&= 3\Per(\widetilde A_N)+\red{8\#\Ccal(X)}-8\#(\Fsf(X)\setminus S_N)+\sum_{F\in S_N}\delta(F)+6\# E^{ext}_S(X)\\
  %  & \ge 3 P(\widetilde A_N) -64 C N^{\tfrac12-\beta}+\sum_{F\in S_N}\delta(F)\\
  %  &\ge P_\phi(\widetilde A_N) -64 C N^{\tfrac12-\beta}.
  %  \end{align*}
    The last inequality follows from Lemma \ref{lemma:delta_versus_P_global}. In particular, by a rescaling we obtain
    \begin{equation}\label{eq:F_versus_P_phi}
        \Per_\phi\big(N^{-1/2}\widetilde A_N\big)\le N^{-1/2}\F_N(X_N)+64 C N^{-\beta}.
    \end{equation}
    By the lower semicontinuity of the anisotropic perimeter, \eqref{eq:tilde_A_N_converge_to_E} and \eqref{eq:F_versus_P_phi} we finally obtain
    \[
    \Per_\phi(E)\le \liminf_{N\to\infty} \Per_\phi\big(N^{-1/2}\widetilde A_N\big)\le \liminf_{N\to\infty} N^{-1/2}\F_N(X_N).
    \]
    This concludes the proof.
\end{proof}

We conclude this section by providing (a sketch of) the proof of the upper bound in Theorem \ref{thm:gamma_convergence}.

\begin{proof}[Proof of Theorem \ref{thm:gamma_convergence}(ii)]
%\subsection{Gamma-limsup}\label{sec:gamma_limsup}
By density in perimeter of polygonal sets in the space of finite perimeter sets, it is sufficient to construct a recovery sequence when the set is a polygon $E$ with area 1. In this case, the procedure is quite standard (see, for instance, \cite{gelli} and \cite{AuYeungFrieseckeSchmidt}): $X_N$ is obtained by considering $\widetilde X_N:=N^{1/2}E\cap \Z^2$, and by adjusting the number of particles by removing or adding some to reach cardinality $N$. This procedure has been performed many times%(see, for instance, \cite{DelNinPetrache})
, and we report here only the computation to show that 
\begin{equation}\label{eq:limsup_easy}
\Per_\phi(E)\ge\limsup_{N\to\infty}N^{-1/2}\F_N(\widetilde X_N).
\end{equation}
It is not difficult to show that $\#\widetilde X_N=N+\mathrm{o}(N)$, and so to adjust the cardinality one can just add (or subtract) an ``almost square'' patch of particles, whose perimeter contribution will be $\mathrm{o}(\sqrt{N})$ and therefore negligible in the limit.

In order to prove \eqref{eq:limsup_easy}, we can reduce ourselves to compute the energy density per unit length of a single boundary edge with normal $\nu$. The idea is to further split the missing-bond energy contributions depending on the direction of the missing bonds: horizontal, vertical, diagonal NE-SW or diagonal NW-SE. We refer also to Fig. \ref{fig:recovery_sequence}, where the missing bonds have been colored in red and split according to their direction. Over a length $L$, the number of horizontal bonds coincides with $|L\cos\alpha|=L|\nu_1|$, where $\alpha$ is the angle between the side and the vertical axis or, equivalently, between the normal $\nu$ and the horizontal axis. Therefore the number of horizontal bonds per unit length is $|\nu_1|$. Similarly, the number of vertical bonds per unit length is $|\nu_2|$. Concerning the NW-SE diagonal bonds, there is one for every boundary point that is ``directly visible from west'' (density $-\nu_1$), plus another one for every concave corner, or equivalently, for every point ``directly visible from north'' (density $\nu_2$). In conclusion, the density in this case is $|\nu_1-\nu_2|$. The last case is similar, giving density $|\nu_1+\nu_2|$. By summing all these contributions we get precisely $\phi(\nu)$, which is what we wanted.
\begin{figure}[htbp]
\begin{minipage}{0.1\textwidth}
    \def\svgwidth{0.35\columnwidth}
    %% Creator: Inkscape 1.0.2 (e86c870879, 2021-01-15), www.inkscape.org
%% PDF/EPS/PS + LaTeX output extension by Johan Engelen, 2010
%% Accompanies image file '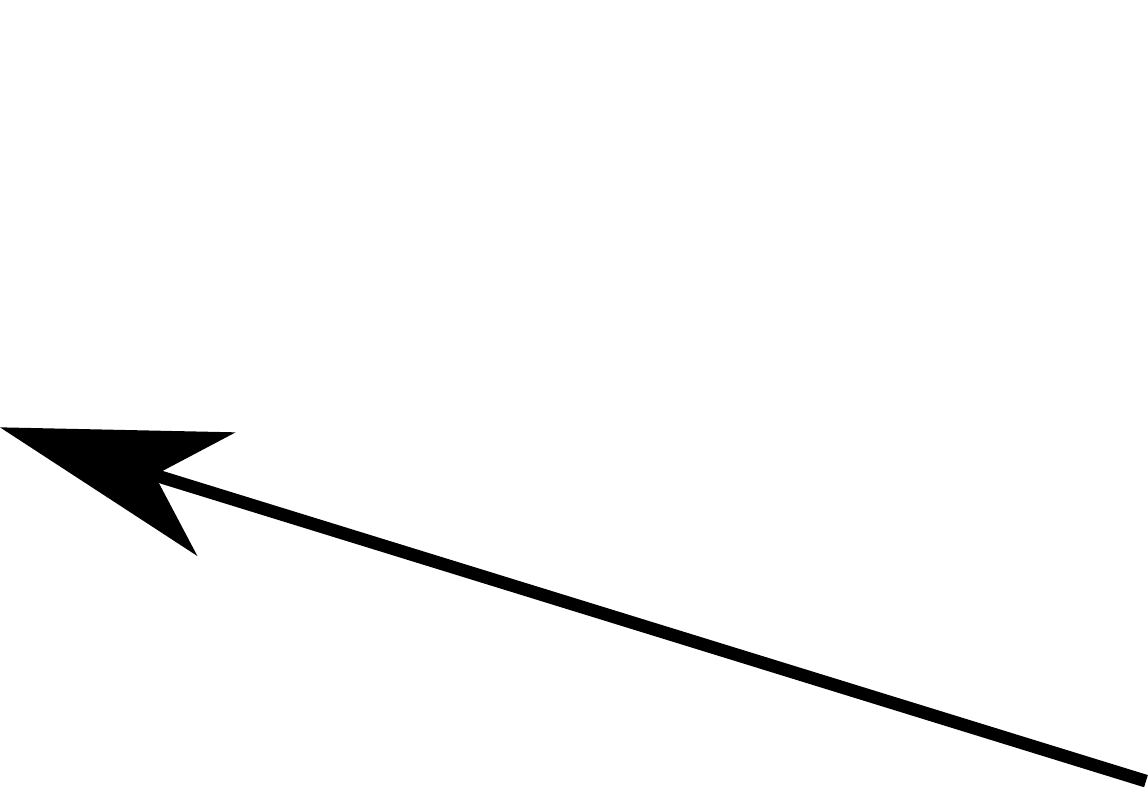' (pdf, eps, ps)
%%
%% To include the image in your LaTeX document, write
%%   \input{<filename>.pdf_tex}
%%  instead of
%%   \includegraphics{<filename>.pdf}
%% To scale the image, write
%%   \def\svgwidth{<desired width>}
%%   \input{<filename>.pdf_tex}
%%  instead of
%%   \includegraphics[width=<desired width>]{<filename>.pdf}
%%
%% Images with a different path to the parent latex file can
%% be accessed with the `import' package (which may need to be
%% installed) using
%%   \usepackage{import}
%% in the preamble, and then including the image with
%%   \import{<path to file>}{<filename>.pdf_tex}
%% Alternatively, one can specify
%%   \graphicspath{{<path to file>/}}
%% 
%% For more information, please see info/svg-inkscape on CTAN:
%%   http://tug.ctan.org/tex-archive/info/svg-inkscape
%%
\begingroup%
  \makeatletter%
  \providecommand\color[2][]{%
    \errmessage{(Inkscape) Color is used for the text in Inkscape, but the package 'color.sty' is not loaded}%
    \renewcommand\color[2][]{}%
  }%
  \providecommand\transparent[1]{%
    \errmessage{(Inkscape) Transparency is used (non-zero) for the text in Inkscape, but the package 'transparent.sty' is not loaded}%
    \renewcommand\transparent[1]{}%
  }%
  \providecommand\rotatebox[2]{#2}%
  \newcommand*\fsize{\dimexpr\f@size pt\relax}%
  \newcommand*\lineheight[1]{\fontsize{\fsize}{#1\fsize}\selectfont}%
  \ifx\svgwidth\undefined%
    \setlength{\unitlength}{550.97088671bp}%
    \ifx\svgscale\undefined%
      \relax%
    \else%
      \setlength{\unitlength}{\unitlength * \real{\svgscale}}%
    \fi%
  \else%
    \setlength{\unitlength}{\svgwidth}%
  \fi%
  \global\let\svgwidth\undefined%
  \global\let\svgscale\undefined%
  \makeatother%
  \begin{picture}(1,0.68583332)%
    \lineheight{1}%
    \setlength\tabcolsep{0pt}%
    \put(0,0){\includegraphics[width=\unitlength,page=1]{normal.pdf}}%
    \put(0.24435079,0.50606539){\makebox(0,0)[lt]{\lineheight{1.25}\smash{\begin{tabular}[t]{l}$\nu$\end{tabular}}}}%
  \end{picture}%
\endgroup%

\end{minipage}
\begin{minipage}{0.2\textwidth}
\includegraphics[width=0.9\textwidth]{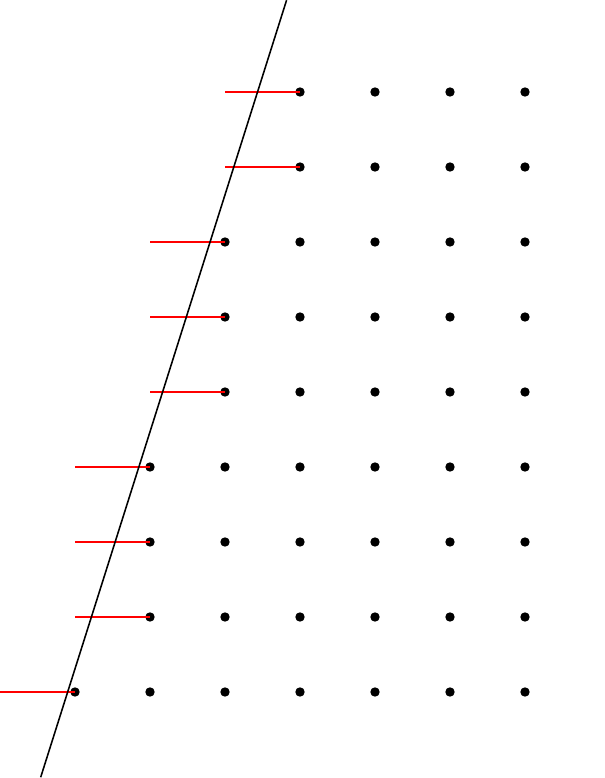}
\end{minipage}
\begin{minipage}{0.2\textwidth}
\includegraphics[width=0.9\textwidth]{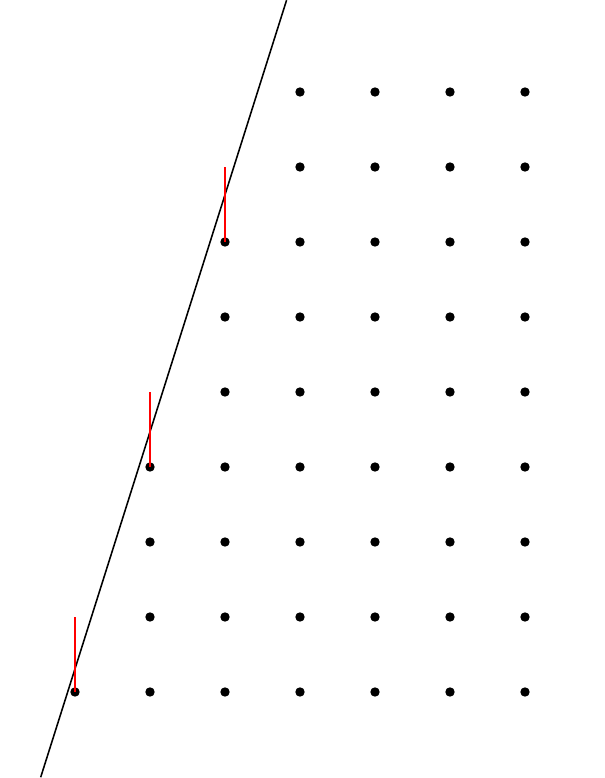}
\end{minipage}
\begin{minipage}{0.2\textwidth}
\includegraphics[width=0.9\textwidth]{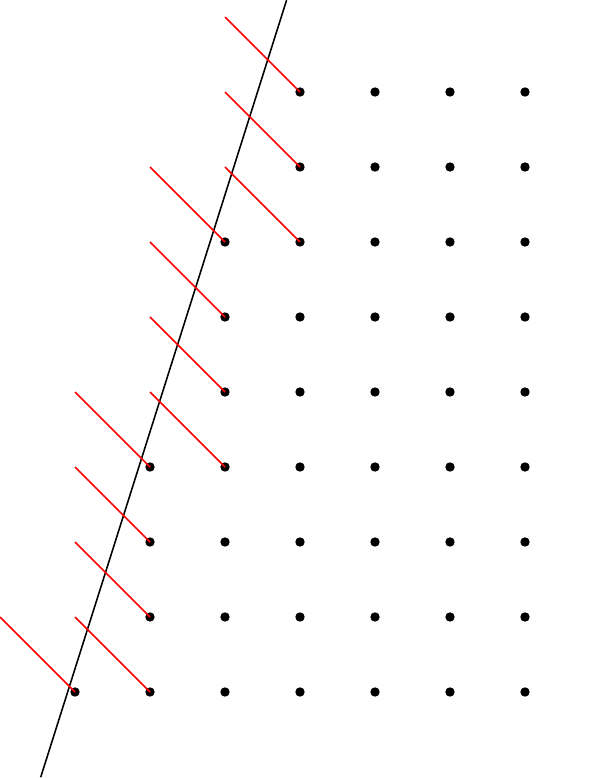}
\end{minipage}
\begin{minipage}{0.2\textwidth}
\includegraphics[width=0.9\textwidth]{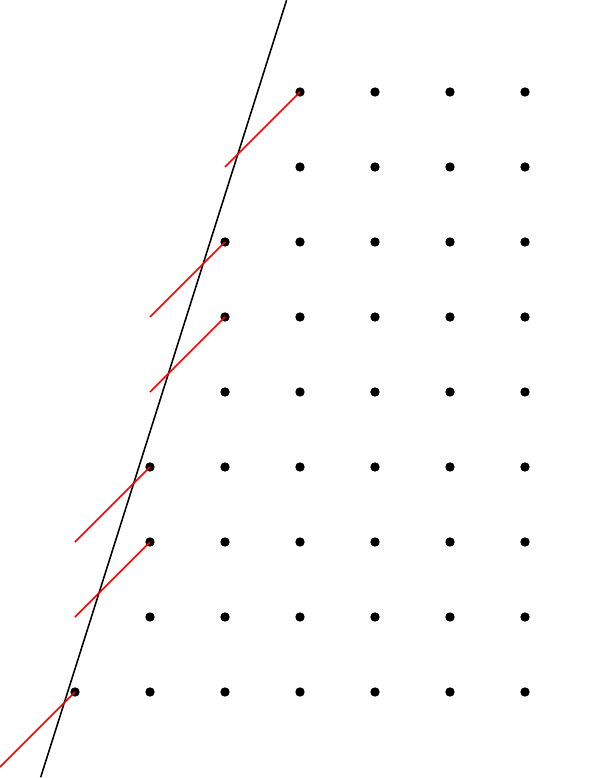}
\end{minipage}    
    \caption{To compute the excess energy $\F$ due to missing bonds it is useful to view it as a superposition of different directional contributions: it is elementary to compute that the number of missing bonds per unit length in the four cases shown above is, respectively, $|\nu_1|$, $|\nu_2|$, $|\nu_1-\nu_2|$, $|\nu_1+\nu_2|$. The sum of these contributions gives the anisotropy $\phi$.}
    \label{fig:recovery_sequence}
\end{figure}
\end{proof}

\begin{remark}
    We finally observe that the Wulff shape associated with $\Per_\phi$ is an octagon whose sides have unit length with respect to $\lVert\cdot\rVert_\infty$, and hence their Euclidean length is 1 for horizontal and vertical sides, and $\sqrt{2}$ for diagonal sides. To see this, besides computing explicitly the polar body, we note that there is a simple way to find out Wulff shapes of anisotropies that are the sum of terms of the form $|\nu\cdot v_i|$, for some $v_i$:  one can just take the Minkowski sum of the $v_i$'s. We refer to, e.g., \cite[Corollary~1.3]{DelNinPetrache} for a similar result in a periodic and quasiperiodic setting. 
\end{remark}

\end{document}